\newtheorem{theorem}{Theorem}[section]
\newtheorem{lemma}[theorem]{Lemma}
\newtheorem{proposition}[theorem]{Proposition}
\newtheorem{corollary}[theorem]{Corollary}
\theoremstyle{definition}
\newtheorem{definition}[theorem]{Definition}
\newtheorem{remark}[theorem]{Remark}
\newtheorem{example}[theorem]{Example}
\numberwithin{equation}{section}
\newcommand{\supp}{\mathrm{supp}\,}
\newcommand{\Autz}{\mathrm{Aut^\circ}\!}
\newcommand{\Pic}{\mathrm{Pic}}
\newcommand{\id}{\mathrm{id}}
\newcommand{\Span}{\mathrm{span}}
\newcommand{\Hom}{\mathrm{Hom}}
\newcommand{\Lie}{\mathrm{Lie}\,}
\renewcommand{\div}{\mathrm{div}}
\newcommand{\SL}{\mathrm{SL}}
\newcommand{\PGL}{\mathrm{PGL}}
\newcommand{\PSL}{\mathrm{PSL}}
\newcommand{\Spin}{\mathrm{Spin}}
\newcommand{\GL}{\mathrm{GL}}
\newcommand{\PSO}{\mathrm{PSO}}
\newcommand{\SO}{\mathrm{SO}}
\newcommand{\Bl}{\mathrm{Bl}}
\newcommand{\CC}{\mathbb C}
\newcommand{\QQ}{\mathbb Q}
\newcommand{\Chi}{\mathcal X}
\newcommand{\V}{\mathrm V}
\newcommand{\Z}{\mathrm Z}
\newcommand{\Zi}{\mathcal Z}
\newcommand{\F}{\mathcal F}
\newcommand{\ZZ}{\mathbb Z}
\newcommand{\PP}{\mathbb P}
\newcommand{\GG}{\mathbb G}
\newcommand{\XX}{\mathbb X}
\newcommand{\D}{\mathcal D}
\newcommand{\DD}{\mathbb D}
\newcommand{\E}{\mathcal E}
\newcommand{\EE}{\mathbb E}
\renewcommand{\P}{\mathrm P}
\newcommand{\eS}{\mathrm S}
\newcommand{\N}{\mathrm N}
\newcommand{\OO}{\mathcal O}
\newcommand{\T}{\mathcal T}
\begin{document}

\title{On reductive automorphism groups of regular embeddings}

\author{Guido Pezzini}
\address{Departement Mathematik \\
Friedrich-Alexander Universit\"at Erlangen-N\"urnberg\\
Cauerstra\ss e 11 \\ 
91058 Erlangen\\
Deutschland}
\email{pezzini@math.fau.de}

\subjclass[2010]{14M27, 14M17, 14J50}

\begin{abstract}
Let $G$ be a connected reductive complex algebraic group acting on a smooth complete complex algebraic variety $X$. We assume that $X$ is a {\em regular embedding}, a condition satisfied in particular by smooth toric varieties and flag varieties. For any set $\D$ of $G$-stable prime divisors, we study the action on $X$ of the group $\Autz(X, \D)$, the connected automorphism group of $X$ stabilizing all elements of $\D$. We determine a Levi subgroup $A(X,D)$ of $\Autz(X,\D)$, and also relevant invariants of $X$ as a spherical $A(X,\D)$-variety. As a byproduct, we obtain a complete description of the inclusion relation between closures of $A(X,D)$-orbits on $X$.
\end{abstract}

\maketitle

\section{Introduction}
In the 1970's Michel Demazure described the connected automorphism groups of two distinguished classes of algebraic varieties equipped with the action of a connected reductive group $G$: complete homogeneous spaces (see \cite{De77}), and smooth complete toric varieties (see \cite{De70}).

These two classes of $G$-varieties admit a common generalization: the {\em regular embeddings}, here also called $G$-regular embeddings, defined independently in \cite{BDP90} and \cite{Gi89}. With the additional assumption of completeness, Fr\'ed\'eric Bien and Michel Brion showed that these varieties form a relevant class of {\em spherical varieties}, which are by definition normal $G$-varieties with a dense orbit of a Borel subgroup of $G$.

The goal of this paper is to study the connected automorphism group $\Autz(X)$ of a complete regular embedding $X$. More precisely, we are interested in the group $\Autz(X,\D)$, where $\D$ is any set of $G$-stable prime divisors of $X$, and $\Autz(X,\D)$ is the connected automorphism group of $X$ stabilizing all elements of $\D$.

Our results are divided in several steps according to additional hypotheses on $\D$ and on $G$, and in each step we accomplish two goals. The first one is to describe a Levi subgroup $A(X,\D)$ of $\Autz(X,\D)$, based on the knowledge of discrete invariants associated with the $G$-action of $X$. These invariants come from the theory of spherical varieties, and are: the group $\Lambda_G(X)$ of $B$-eigenvalues of $B$-eigenvectors of $\CC(X)$, the set of {\em spherical roots} $\Sigma_G(X)$ of $X$ (see Definition~\ref{def:sphroots}), the set $\Delta_G(X)$ of $B$-stable but not $G$-stable prime divisors of $X$, and the stabilizer $\P_G(X)$ of the open $B$-orbit of $X$. The last invariant is the {\em fan} $\F_G(X)$, a collection of strictly convex polyhedral cones in the vector space $\Hom_\ZZ(\Lambda_G(X),\QQ)$ (see Definition~\ref{def:fan}). The fan determines $X$ uniquely among the complete regular embeddings having the same open $G$-orbit, generalizing the fan associated with a toric variety. It also provides a combinatorial description of the $G$-orbits of $X$ and of the inclusion relation between $G$-orbit closures (see \cite{Kn91}).

Now $X$ is a spherical variety also under the action of $A(X,\D)$, and our second goal is to determine the above invariants of $X$ with respect to the action of $A(X,\D)$. In particular, this provides a combinatorial description of the $A(X,\D)$-orbits on $X$.


Our approach is based on the analysis of a spherical variety $\XX$ canonically associated with $X$ and equipped with a canonical $G$-equivariant map $X\to \XX$. The variety $\XX$, defined in Section~\ref{s:wclosure}, is obtained from $X$ using a procedure called {\em wonderful closure}, which is closely related to the well-known construction of the {\em spherical closure} of a generic stabilizer of a spherical variety. We introduce the wonderful closure because it turns out to give much more direct informations on the automorphisms of $X$ than the spherical closure. On the other hand {\em wonderful varieties} such as $\XX$ (see Definition~\ref{def:wonderful}) play a central role in the theory of spherical varieties (see e.g.\ \cite{Lu01}), and their automorphism groups have been already studied in  \cite{Br07} and \cite{Pe09}. 

Our study of the group $\Autz(X,\D)$ proceeds by ``successive approximation'' with a sequence of subgroups that starts with elements very closely related to $\Autz(\XX)$. More precisely, we consider the filtration
\[
\Autz(X,\partial X) \subseteq \Autz(X,\D \cup (\partial X)^\ell) \subseteq \Autz(X,\D),
\]
where we denote by $\partial X$ the set of all $G$-stable prime divisors of $X$ and by $(\partial X)^\ell$ the subset of $G$-invariant prime divisors mapping surjectively onto $\XX$.

The group $\Autz(X,\partial X)$ is also the connected group of automorphisms stabilizing all $G$-orbits, and is the subject of Section~\ref{s:allGorbits}. This group is reductive after \cite{Br07} and $X$ is regular under its action; moreover $\Autz(X,\partial X)$ is equal to $\Autz(\XX,\partial \XX)$ up to central isogeny and up to a torus factor. We recall that $\Autz(\XX,\DD)$ for any $\DD\subseteq \partial \XX$ is reductive, and known after \cite{Pe09}.


After providing some technical results relating the automorphisms of $X$ and $\XX$ in Sections~\ref{s:relating} and \ref{s:restricting}, we devote Section~\ref{s:nonlinear} to $\Autz(X,\D\cup (\partial X)^\ell)$. We show that this group is again reductive: it is equal to $\Autz(\XX, \DD)$ up to central isogeny and up to a torus factor, where $\DD\subseteq \partial\XX$ is now the set of the images $\pi(D)\in\partial \XX$ such that $D\in\partial X$ is stable under $\Autz(X,\D\cup (\partial X)^\ell)$.

This is the most technically involved part of the paper, and we use an indirect approach. First we analyze how the invariants of $X$ behave whenever $\Autz(X,\D\cup (\partial X)^\ell)$ is strictly bigger than $\Autz(X,\partial X)$. Under this hypothesis the fan $\F_G(X)$ has a particularly nice structure and is essentially determined by the subfan $\F_G^\Lambda(X)\subseteq \F_G(X)$ obtained intersecting all cones with a linear subspace naturally associated with $\DD$ (see Lemma~\ref{lemma:movedareorthogonal}).

Then, using the open $\Autz(\XX,\DD)$-orbit of $\XX$ and the subfan $\F_G^\Lambda(X)$, we build ex novo a regular variety $X_A$ under the action of a reductive group $A$ which is equal up to a torus factor to the universal cover of $\Autz(\XX,\DD)$ (see Proposition~\ref{proposition:H_A} ad Corollary~\ref{cor:fan}). We show in Proposition~\ref{proposition:H_A} and Theorem~\ref{thm:nonlinear:lifts} that $A$ is equal to $\Autz(X,\D\cup (\partial X)^\ell)$ up to central isogeny, and that $X_A$ is $A$-equivariantly isomorphic to $X$.

We study the group $\Autz(X,\D)$ in Section~\ref{s:abelian} for any $\D$ assuming that $G$ is abelian, so $X$ is a smooth complete toric variety. Here $\XX$ is least useful, because in this case it is a single point. However, the classical results by Demazure describe a Levi subgroup $A(X,\D)$ of $\Autz(X,\D)$, and it only remains to compute the invariants of $X$ as a spherical variety under its action (see Proposition~\ref{prop:abelian} and Theorem~\ref{thm:abelian}). In particular $X$ is not necessarily regular under the action of $A(X,\D)$, but we show that it is {\em horospherical}, i.e.\ it has no spherical root.

Section~\ref{s:semisimple} deals with the case where $\D$ is again arbitrary but $G$ is semisimple. Our main result here is that $\Autz(X,\D \cup (\partial X)^\ell)$ is itself a Levi subgroup of $\Autz(X,\D)$ (see Theorem~\ref{thm:semisimple}). Therefore, in this case, our results of Section~\ref{s:nonlinear} accomplish our goals in full generality.

Finally, we discuss in Section~\ref{s:linear} the group $\Autz(X,\D)$ with no restriction on $\D$ or $G$. Here we build on our results on $\Autz(X,\D\cup (\partial X)^\ell)$ together with our results on the abelian case, which can be applied to a general fiber $Y$ of the map $\pi\colon X\to \XX$ since $Y$ is a union of toric varieties.

We show in Propositions~\ref{prop:Levinl} and \ref{prop:Levinldesc}, that a Levi subgroup $A(X,\D)$ is equal, up to isogeny, to the product of the commutator of $\Autz(X,\D\cup (\partial X)^\ell)$ with a reductive subgroup of $\Autz(X')$, where $X'$ is a connected component of $Y$. We also compute the invariants of $X$ under the action of $A(X,\D)$ in Proposition~\ref{prop:linear} and Theorem~\ref{thm:linear}.

\subsection*{Acknowledgements}
I thank Jacopo Gandini for stimulating discussions, and I am especially grateful to Michel Brion for discussions and support during the development of this work. I thank the anonymous referees for useful remarks and suggestions on a previous version of this paper.

\subsection*{Notations}
Through this paper $G$ is a connected reductive linear algebraic group over the field of complex numbers $\CC$. We assume that $G = G'\times C$ where $C$ is an algebraic torus and $G'$ is semisimple and simply connected. We denote by $\GG_m$ the one-dimensional torus.

We fix a Borel subgroup $B\subseteq G$ and a maximal torus $T\subseteq B$. We denote by $B^-$ the Borel subgroup of $G$ such that $B\cap B^-=T$. If $\alpha$ is a root of $G$, then $\alpha^\vee$ denotes the corresponding coroot.

If $H$ is any algebraic group then we denote by $\Z(H)$ its center, by $H^\circ$ its connected component containing the unit element $e_H$, by $H^r$ its radical, by $H^u$ its unipotent radical, and by $\Chi(H)$ the set of its characters, i.e.\ algebraic group homomorphisms $H\to\GG_m$. If $V$ is an $H$-module, then we denote by $V^{(H)}$ the set of non-zero $H$-semiinvariants of $V$, and for any $\chi\in\Chi(H)$ we set%
\footnote{We underline that in our notation $V^{(H)}$ does {\em not} contain $0$ and the same holds for $V^{(B)}_\chi$, in contrast with the similar common notation $V_\chi$ which we don't use here.}
\[
V^{(H)}_\chi = \left\{ v\in V\smallsetminus\left\{0\right\} \;\middle\vert\; hv = \chi(h)v \;\forall h\in H \right\}.
\]
If $H\subseteq K$ are subgroups of $G$, then we denote by $\pi^{H,K}\colon G/H\to G/K$ the natural map sending $gH\in G/H$ to $gK\in G/K$, and by $\N_KH$ the normalizer of $H$ in $K$.

For any subset $R$ of a $\ZZ$-module $\Lambda$, we denote by $R^{\geq0}$ (resp.\ $R^\perp$) the subset of $\Hom_\ZZ(\Lambda,\QQ)$ of all elements that are $\geq 0$ (resp.\ $=0$) on $R$. We define in the same way {\em mutatis mutandis} the subsets $R^{\geq0},R^\perp\subseteq \Lambda$ for $R\subseteq \Hom_\ZZ(\Lambda,\QQ)$. The notation $\langle -, - \rangle$ denotes the natural pairing between $\Lambda$ and $\Hom_\ZZ(\Lambda,\QQ)$.

If $X$ is an algebraic variety, the connected component containing $\id_X$ of its automorphism group is denoted by $\Autz(X)$. If a connected algebraic group $H$ acts on $X$, we denote by
\[
\theta_{H,X}\colon H\to\Autz(X)
\]
the corresponding homomorphism.

If $X$ is a $G$-variety, we denote by $\Pic^G(X)$ the group of isomorphism classes of $G$-linearized invertible sheaves. If $X$ is normal and $Y$ is a Cartier divisor, then the invertible sheaf $\OO_X(Y)$ admits a (non unique) $G$-linearization (see \cite[Remark after Proposition~2.4]{KKLV89}). Through the paper $X$ will be in addition smooth, complete and quasi-homogeneous, and we will consider $G$-stable prime divisors. Then we may and will always assume that the $G$-linearization is chosen in such a way that the induced $G$-action on $H^0(X,\OO_X(Y))$ is equal to the action induced by the inclusion $H^0(X,\OO_X(Y))\subseteq\CC(X)$.

\section{Complete regular embeddings}\label{s:definitions}
\begin{definition}
Let $X$ be an irreducible $G$-variety with an open $G$-orbit. Then $X$ is a {\em $G$-regular embedding} if for any $x\in X$:
\begin{enumerate}
\item the closure $\overline{Gx}$ of its orbit is smooth, and if it is of positive codimension then it is the transversal intersection of the $G$-stable prime divisors containing it;
\item the stabilizer $G_x$ has a dense orbit on the normal space in $X$ to the orbit $Gx$ in the point $x$.
\end{enumerate}
\end{definition}
A $G$-regular embedding is smooth and has a finite number of $G$-orbits. Examples of $G$-regular embeddings are the $G$-homogeneous spaces for any $G$, and if $G$ is an algebraic torus then any smooth toric $G$-variety. Regular varieties occur naturally also in the theory of {\em spherical varieties}, i.e.\ normal $G$-varieties with a dense $B$-orbit.

More precisely, suppose that a $G$-variety $X$ is smooth and complete. Then $X$ is $G$-regular if and only if it is spherical and {\em toroidal}, i.e.\ any $B$-stable prime divisor containing a $G$-orbit is also $G$-stable (see \cite[Proposition 2.2.1]{BB96}).

We review some relevant invariants associated to any spherical $G$-variety $X$. If $x_0$ is a point on the open $G$-orbit of $X$, then we also denote the orbit $G x_0$ simply by $G/H$, where $H=G_{x_0}$ is called a {\em generic stabilizer} of $X$. In this case, $H$ is also called a {\em spherical subgroup}, and $(X,x_0)$ (or simply $X$) is called an {\em embedding} of $G/H$. A {\em morphism} between two embeddings $(X,x_0)$ and $(X',x_0')$ is a $G$-equivariant map $X\to X'$ sending $x_0$ to $x_0'$.

We will always assume that $x_0$ is chosen in such a way that $Bx_0$ is dense in $X$. Then $H$ is also called a {\em $B$-spherical subgroup}.

\begin{definition}\label{def:invariants}
Let $X$ be a spherical $G$-variety with open $G$-orbit $G/H$.
\begin{enumerate}
\item We define%
\footnote{We ignore the dependence on $B$ of all the invariants we define. This is justified by the fact that for any reductive group under consideration the choice of a Borel subgroup will be either unique (when the group is abelian) or always explicitly fixed.}
the lattice
\[
\Lambda_G(X) = \left\{\chi\in\Chi(B)\,\middle\vert\, \CC(X)^{(B)}_{\chi}\neq \varnothing\right\},
\]
whose rank is by definition the {\em rank} of $X$.
\item We define
\[
\N_G(X) = \Hom_\ZZ(\Lambda_G(X),\QQ).
\]
\item We define $\Delta_G(X)$ to be the set of {\em colors} of $X$, i.e.\ the $B$-stable prime divisors of $X$ having non-empty intersection with the open $G$-orbit of $X$.
\item\label{def:invariants:rho} For any discrete valuation $\nu\colon\CC(X)\smallsetminus\{0\}\to\QQ$ we define an element $\rho_{G,X}(\nu)\in\N_G(X)$ with the formula
\[
\langle \rho_{G,X}(\nu),\chi\rangle = \nu(f_\chi),
\]
where $f_\chi\in\CC(X)^{(B)}_\chi$. If $D$ is a prime divisor of $X$ and $\nu_D$ is the associated discrete valuation, then we will also write $\rho_{G,X}(D)$ for $\rho_{G,X}(\nu_D)$.
\item We define
\[
\V_G(X) = \left\{ \rho_{G,X}(\nu) \;|\; \nu \textrm{ is $G$-invariant} \right\},
\]
which is a polyhedral convex cone of maximal dimension in $\N_G(X)$ (see \cite{Br90}); we denote its linear part by $\V_G^\ell(X)$.
\item  We define the {\em boundary} of $X$, denoted by $\partial_G X$, to be the set of the irreducible components of $X\smallsetminus (G/H)$.
\item We define $\P_G(X)$ to be the stabilizer of the open $B$-orbit of $X$. The set of simple roots associated to $\P_G(X)$ is denoted by $\eS^p_G(X)$.
\end{enumerate}
\end{definition} 

For the above, and for all the invariants defined later, we will drop the indices $G$ or $X$ whenever it is clear which group or which variety are considered. In loose terms the colors of $X$ can also be considered as invariants under $G$-equivariant birational maps, since they are the closures in $X$ of the colors of $G/H$.

The Luna-Vust theory of embeddings of homogeneous spaces specializes for spherical toroidal varieties in the following way (for details and proofs see \cite{Kn96}).

\begin{definition}\label{def:fan}
Let $X$ be a toroidal spherical $G$-variety, and $Y$ an irreducible $G$-stable locally closed subvariety. Then we define  $c_{X,Y}\subseteq \N(X)$ to be the polyhedral convex cone generated by $\rho(D_1),\ldots,\rho(D_n)$, where  $D_1,\ldots,D_n$ are the $B$-stable prime divisors containing $Y$. The {\em fan} of $X$ is defined as
\[
\F_G(X)=\left\{c_{X,Y} \;\middle\vert\; Y\mbox{ a $G$-orbit of $X$}\right\}.
\]
\end{definition}

Notice that since $X$ is toroidal then the divisors $D_1$,\ldots,$D_n$ above are also $G$-stable for any $Y$. The collection of convex cones $\F(X)$ satisfies the following properties:

\begin{enumerate}
\item \label{fan1} each cone of $\F(X)$ is contained in $\V(G/H)$, it is strictly convex, and all its faces belong to $\F(X)$,
\item \label{fan2} any element of $\V(G/H)$ belongs to the relative interior of at most one cone of $\F(X)$.
\end{enumerate}
The map $X\mapsto \F(X)$ induces a bijection between toroidal embeddings of $G/H$ (up to isomorphism of embeddings) and {\em fans}, i.e.\ collections of strictly convex polyhedral convex cones satisfying (\ref{fan1}) and (\ref{fan2}).

The {\em support} of a fan $\F$ is defined as
\[
\supp\F = \bigcup_{c\in\F} c.
\]

The variety $X$ is complete if and only if $\supp\F(X)=\V(X)$, and it is smooth if and only if for each $c\in\F(X)$ there exists a basis $\gamma_1\ldots,\gamma_r$ of $\Lambda(X)$ and an integer $k$ between $1$ and $r$ such that
\[
c = \left\{\gamma_1,\ldots,\gamma_k\right\}^{\geq0}.
\]

For later reference, we recall that if a spherical embedding $X$ is not toroidal, then it is also described by a similar datum, called a fan of {\em colored convex cones}. Here, the convex cone associated to a $G$-orbit $Y\subseteq X$ is replaced by the pair $(c_{X,Y},d_{X,Y})$ where $d_{X,Y}$ is the set of colors containing $Y$, and $c_{X,Y}$ is defined as above.

In general, the set $\V(X)$ is also a polyhedral convex cone of maximal dimension in $\N_G(X)$, and its linear part $\V^\ell(X)$ has dimension (as a $\QQ$-vector space) equal to the dimension of $\N_GH/H$ (as a complex algebraic group). The equations defining the maximal proper faces of $\V(X)$ are linearly independent (see \cite[Corollaire 3.3]{Br90}). In other words, there always exist $\sigma_1,\ldots,\sigma_k\in\Lambda(X)$ that are indivisible, linearly independent, and such that
\[
\V(X) = \left\{-\sigma_1,\ldots,-\sigma_k\right\}^{\geq0}.
\]

\begin{definition}\label{def:sphroots}
The elements $\sigma_1,\ldots,\sigma_k$ above are uniquely determined by $G/H$ and called the {\em spherical roots} of $X$; their set is denoted by $\Sigma_G(X)$.
\end{definition}

The map $Y\mapsto c_{X,Y}$ sends a $G$-orbit of codimension $d$ in $X$ to a cone of dimension $d$, and this restricts to a bijection between the boundary $\partial X$ and the set of $1$-dimensional cones in $\F(X)$.

Whether the cone of a given prime divisor $D\in\partial X$ lies or not on the linear part of the valuation cone has a strong influence on the automorphisms of $X$ not stabilizing $D$. This motivates the following definition.
\begin{definition}\label{def:ell}
For a subset $\D\subseteq \partial X$, we define the subsets
\[
\D^\ell = \left\{Y\in\D\,\middle\vert\, c_{X,Y}\subset\V^\ell(X) \right\}
\]
and
\[
\D^{n\ell} = \D\smallsetminus\D^\ell.
\]
\end{definition}

\begin{example}\label{ex:blowup0}
Consider $G=\SL(n+1)$ (with $n\geq 2$) acting linearly and diagonally on $\PP^{n+1}\times (\PP^n)^*$, where on the first factor it acts only on the first $n+1$ homogeneous coordinates. Choose $B$ to be the subgroup of upper triangular matrices. Then $X=\Bl_p(\PP^{n+1})\times (\PP^n)^*$, with $p=[0,\ldots,0,1]$, is a $G$-regular variety with three $G$-stable prime divisors. It has rank $2$, and $\Lambda_G(X)$ is generated by the first and the last fundamental dominant weights, denoted resp.\ $\omega^G_1$ and $\omega^G_n$. The variety $X$ has two colors, namely the inverse images of the $B$-stable hyperplanes of its two factors. With respect to the basis of $\N_G(X)\cong\QQ^2$ dual to $(\omega^G_1,\omega^G_n)$, the two colors have valuations $(1,0)$ and $(0,1)$, and the $G$-stable prime divisors $D_1$, $D_2$ and $E$ have valuations resp.\ $(-1,1)$, $(1,-1)$ and $(-1,0)$. The fan $\F_G(X)$ has two maximal cones, generated resp.\ by $\rho(D_1)$, $\rho(E)$ and by $\rho(E)$, $\rho(D_2)$, and $X$ has exactly one spherical root, namely $\sigma=\omega_1+\omega_n$. The divisor $E$ is the unique $G$-stable prime divisor not lying on the linear part of the valuation cone.
\end{example}

\section{Spherical and wonderful closure}\label{s:wclosure}
In this section we recall the notion, introduced in \cite{Lu01}, of the spherical closure $\overline H$ of a spherical subgroup $H\subseteq G$. We also define another subgroup containing $H$, called its wonderful closure. This is essentially already known, but not yet found in the literature.

Since we only consider the group $G$, in this section we drop all subscripts of the invariants of Definition~\ref{def:invariants}.

An element $n$ of the normalizer $\N_GH$ of $H$ induces a $G$-equivariant isomorphism $G/H\to G/H$ given by $gH\mapsto gnH$. This induces an action of $\N_GH$ on the set of colors $\Delta(G/H)$: the {\em spherical closure} $\overline H$ of $H$ is defined as the kernel of this action.

If $\overline H=H$ then $H$ is called {\em spherically closed}, and for any spherical subgroup $H\subseteq G$ the spherical closure $\overline H$ is itself spherically closed. This is well known, and also follows easily from \cite[Lemma 2.4.2]{BL11}; we provide here a direct proof.

\begin{proposition}
For any spherical subgroup $H\subseteq G$, the spherical closure $\overline H$ is spherically closed.
\end{proposition}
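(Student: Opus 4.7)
The plan is to analyze the finite \'etale $G$-equivariant cover $\pi\colon G/H \to G/\overline H$, which is Galois with group $\overline H/H$ (finite, since it embeds into the permutation group of the finite set $\Delta(G/H)$).

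First I would show that the pullback map $\pi^{*}\colon \Delta(G/\overline H) \to \Delta(G/H)$, $D' \mapsto \pi^{-1}(D')$, is a bijection. For any color $D' \in \Delta(G/\overline H)$, the irreducible components of $\pi^{-1}(D')$ are $B$-stable prime divisors permuted transitively by the deck group $\overline H/H$; but by the very definition of the spherical closure, $\overline H/H$ fixes each color of $G/H$ individually, so only one component is possible. Next, for $m \in \N_G H \subseteq \N_G \overline H$ the right multiplication $R_m$ commutes with $\pi$, so the action of $m$ on $\Delta(G/H)$ corresponds under $\pi^{*}$ to its action on $\Delta(G/\overline H)$; consequently $m \in \N_G H$ acts trivially on $\Delta(G/\overline H)$ if and only if $m \in \overline H$.

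The crux is to show that any $n \in \N_G \overline H$ whose action on $\Delta(G/\overline H)$ is trivial must belong to $\overline H$. My strategy is to lift the resulting $G$-equivariant automorphism $\phi_n = R_n$ of $G/\overline H$ to a $G$-equivariant automorphism of $G/H$. Such a lift must take the form $R_m$ for some $m \in \N_G H$ with $m\overline H = n\overline H$; once the lift exists, the bijection from the first step forces $R_m$ to fix every color of $G/H$, giving $m \in \overline H$ and hence $n \in m\overline H = \overline H$.

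The main obstacle is producing the lift, that is, showing $n \in \N_G H \cdot \overline H$ (which equals $\N_G H$ since $\overline H \subseteq \N_G H$). To this end, I would consider the conjugate subgroup $nHn^{-1}$, which lies in $\overline H$ and shares with $H$ both the connected component $H^\circ$ and the spherical closure $\overline H$ (using $n \in \N_G \overline H$). The covers $G/H$ and $G/(nHn^{-1})$ of $G/\overline H$ are then Galois with isomorphic deck groups, and the hypothesis that $R_n$ fixes every color of $G/\overline H$ should match their color data in a way that forces $nHn^{-1} = H$. A possible alternative route is a direct argument using $B$-semi-invariant functions on $G/\overline H$ and the induced action of $\N_G \overline H$ on their one-dimensional weight spaces in $\CC(G/\overline H)$, whose kernel already recovers $\overline H$.
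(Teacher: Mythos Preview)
Your overall architecture is sound --- establish the bijection $\Delta(G/\overline H)\leftrightarrow\Delta(G/H)$, then show that any $n\in\N_G\overline H$ fixing every color of $G/\overline H$ already lies in $\N_GH$ --- but there are two genuine problems.

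First, the claim that $\overline H/H$ is finite is false: by definition $\overline H$ is the \emph{kernel} of the $\N_GH$-action on $\Delta(G/H)$, so it is $\N_GH/\overline H$, not $\overline H/H$, that embeds into the symmetric group on colors. In fact $\overline H/H$ is diagonalizable and can have positive dimension (e.g.\ for $H$ horospherical). So $\pi\colon G/H\to G/\overline H$ is generally not finite \'etale, and the ``connected component $H^\circ$ is shared'' claim is likewise unjustified. Fortunately the color-bijection step survives: the components of $\pi^{-1}(D')$ are still permuted transitively by $\overline H/H$ acting by right translation, and that group fixes every color of $G/H$ by definition, so there is a single component regardless of finiteness.

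Second, and more seriously, you identify the crux correctly --- showing $nHn^{-1}=H$ for $n\in\N_G\overline H$ fixing all colors of $G/\overline H$ --- but do not actually carry it out; saying the color data ``should'' force this is not a proof. The paper closes exactly this gap with a short, clean device you are circling around in your ``alternative route'': the colors of $G/\overline H$ generate $\Pic^G(G/\overline H)\cong\Chi(\overline H)$, so any element of $\overline{\overline H}$ (which by definition fixes those colors) acts trivially on $\Chi(\overline H)$. Since $\overline H/H$ is diagonalizable, $H$ is cut out inside $\overline H$ by characters of $\overline H$; triviality on $\Chi(\overline H)$ therefore forces $\overline{\overline H}$ to normalize $H$. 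Once $\overline{\overline H}\subseteq\N_GH$, your bijection of colors finishes the argument. I would recommend replacing the incomplete lifting paragraph with this Picard-group/character argument.
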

\begin{proof}
Since $\overline H$ is contained in $\N_G H$ the quotient $\overline H/H$ is diagonalizable (see \cite[Theorem 6.1]{Kn94}), and thus $H$ is defined inside $\overline H$ as intersection of kernels of some characters. The colors of $G/\overline H$ generate $\Pic^G(G/\overline H)$ (see \cite[Proposition 2.2]{Br89}) and the latter is isomorphic to $\Chi(\overline H)$ (see \cite[Section 3.1]{KKV89}), therefore $\overline{\overline H}$ acts trivially on $\Chi(\overline H)$.

This implies that $\overline{\overline H}$ normalizes $H$. By definition, it fixes all colors of $G/\overline H$, but these correspond to the colors of $G/H$ via the natural map $\pi^{H,\overline H}\colon G/H\to G/\overline H$. Hence $\overline{\overline H}\subseteq\overline H$.
\end{proof}

For later convenience we report the following auxiliary result. Recall that whenever $H\subseteq K$ are spherical subgroups of $G$, the lattice $\Lambda(G/K)$ is contained in the lattice $\Lambda(G/H)$, since $B$-semiinvariant functions can be lifted from $G/K$ to $G/H$ via the map $\pi^{H,K}\colon G/H\to G/K$. We also denote this inclusion as a map $(\pi^{H,K})^*\colon\Lambda(G/K)\to\Lambda(G/H)$, which induces a surjection $\pi^{H,K}_*\colon\N(G/H)\to\N(G/K)$. 
\begin{lemma}\label{lemma:inclusion}
Let $H\subseteq K\subseteq \overline H$ be spherical subgroups of $G$. Then
\[
(\pi^{H,K}_*)^{-1}(\V(G/H)) = \V(G/K).
\]
\end{lemma}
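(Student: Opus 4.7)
The plan is to establish the claimed equality by expressing both sides in terms of the defining half-spaces of the valuation cones via spherical roots, using the description $\V(G/H') = \{-\sigma_1,\ldots,-\sigma_k\}^{\geq 0}$ recalled in the excerpt, and to compare the spherical roots of $G/H$ with those of $G/K$ under the hypothesis $K \subseteq \overline H$.

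First I would analyze the surjection $\pi^{H,K}_*\colon \N(G/H)\twoheadrightarrow\N(G/K)$, which is dual to the lattice inclusion $\Lambda(G/K)\hookrightarrow\Lambda(G/H)$. Because $K\subseteq\overline H\subseteq\N_GH$, the quotient $K/H$ is diagonalizable, $G/K = (G/H)/(K/H)$, and $\CC(G/K) = \CC(G/H)^{K/H}$; dually, $\ker\pi^{H,K}_*$ is the annihilator of $\Lambda(G/K)$ and is contained in $\V^\ell(G/H) \subseteq \V(G/H)$, since every $\QQ$-rational element of $\Lie(K/H)$ gives a $G$-invariant derivation of $\CC(G/H)$, hence a linear element of the valuation cone vanishing on $\Lambda(G/K)$. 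This containment of the kernel inside $\V(G/H)$ will ensure that the preimage of a cone in $\N(G/K)$ through $\pi^{H,K}_*$ behaves well.

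The easy direction is that each $\nu \in \V(G/H)$ has image $\pi^{H,K}_*(\nu) \in \V(G/K)$: this is the standard observation that restricting a $G$-invariant discrete valuation of $\CC(G/H)$ to the invariant subfield $\CC(G/K)$ is again $G$-invariant, translated under $\rho_{G,X}$. For the opposite direction I would unpack the condition that $\pi^{H,K}_*(\nu) \in \V(G/K)$ for $\nu \in \N(G/H)$: writing $\V(G/K) = \{-\tau_1,\ldots,-\tau_m\}^{\geq 0}$ with $\tau_j \in \Sigma(G/K) \subseteq \Lambda(G/K) \subseteq \Lambda(G/H)$, this becomes $\langle \nu, \tau_j\rangle \leq 0$ for every $j$, and one has to match these inequalities against those cutting out $\V(G/H) = \{-\sigma_1,\ldots,-\sigma_k\}^{\geq 0}$ in $\N(G/H)$.

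The main obstacle is therefore the comparison of the spherical roots $\Sigma(G/H)$ and $\Sigma(G/K)$ as subsets of the common lattice $\Lambda(G/K) \subseteq \Lambda(G/H)$. Here I would rely on two classical inputs: first, $\Sigma(G/H) \subseteq \Lambda(G/\N_GH) \subseteq \Lambda(G/K)$, because the spherical roots are preserved by the natural $\N_GH/H$-action on $\Lambda(G/H)$ (as they are intrinsically attached to the invariant cone $\V(G/H)$); and second, under $H \subseteq K \subseteq \overline H$ the generators of $\Sigma(G/K)$ are proportional to those of $\Sigma(G/H)$ by positive rational factors in $\{1,1/2\}$, a consequence of the theory of normalized spherical roots due to Knop and Losev, together with the fact that $\overline H$ being spherically closed (by the proposition just proved) makes $\overline H$ the common spherical closure of both $H$ and $K$. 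Granted these two facts, the half-spaces defining $\V(G/K)$, once read on $\N(G/H)$ via $\pi^{H,K}_*$, coincide exactly with those defining $\V(G/H)$, and the equality claimed in the lemma follows.
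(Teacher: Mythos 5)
Your argument is correct in substance but follows a genuinely different route from the paper's, which is much shorter: it quotes Knop's theorems that $\pi^{H,K}_*(\V(G/H))=\V(G/K)$ and that $\ker\pi^{H,\overline H}_*=\V^\ell(G/H)$, deduces $\ker\pi^{H,K}_*\subseteq\V^\ell(G/H)$ from the factorization $\pi^{K,\overline H}_*\circ\pi^{H,K}_*=\pi^{H,\overline H}_*$, and concludes because $\V(G/H)+\V^\ell(G/H)=\V(G/H)$. You instead compare the defining half-spaces of the two cones, reducing everything to the single statement that the elements of $\Sigma(G/K)$ are positive multiples of those of $\Sigma(G/H)$, in bijection. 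That statement is indeed classical and suffices (once it is in place you do not even need the kernel containment, since the pulled-back inequalities already cut out $\V(G/H)$); but it is essentially equivalent to the conjunction of the two facts the paper cites, and for an arbitrary intermediate $K$ it is usually \emph{derived} from them, so your route is in effect a detour through a corollary. What it buys is an explicit facet-by-facet correspondence between the two cones. Your alternative justification of $\ker\pi^{H,K}_*\subseteq\V^\ell(G/H)$ via $\Lie(K/H)$ also works, provided you add that the image of $\Lie(K/H)$ in $\N(G/H)$ spans the whole annihilator of $\Lambda(G/K)$ (true, because $\Lambda(G/H)/\Lambda(G/K)$ is the character group of the image of $K/H$ in $\Hom_\ZZ(\Lambda(G/H),\GG_m)$).

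Two small corrections. The sub-claim $\Sigma(G/H)\subseteq\Lambda(G/\N_GH)$ is false in general: for $G=\SL(2)$ and $H=T$ one has $\Sigma(G/T)=\{\alpha\}$ while $\Lambda(G/\N_GT)=\ZZ\,2\alpha$. Only suitable positive multiples of the spherical roots lie in the smaller lattice, which is all you actually use, since only the rays they span enter the half-space description; likewise the proportionality factors from $\Sigma(G/H)$ to $\Sigma(G/K)$ lie in $\{1,2\}$ rather than $\{1,1/2\}$ --- again immaterial. Finally, the lemma as printed interchanges the two cones; like you, one should read it as $(\pi^{H,K}_*)^{-1}(\V(G/K))=\V(G/H)$.
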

\begin{proof}
The claim stems from $\pi^{H,K}_*(\V(G/H)) = \V(G/K)$ together with $\ker\pi_*^{H,\overline H}=\V^\ell(G/H)$ (see \cite[Theorem 4.4 and Theorem 6.1]{Kn96}) and
\[
\ker\left(\pi^{H,K}_*\right) \subseteq \V^\ell(G/H),
\]
which follows from $\pi^{K,\overline H}_*\circ \pi^{H,K}_* =\pi_*^{H,\overline H}$.
\end{proof}

A class of subgroups slightly broader then the spherically closed ones is the following.

\begin{definition}\label{def:wonderful}
If $\Sigma(G/H)$ is a basis of $\Lambda(G/H)$ then $H$ is called a {\em wonderful} subgroup of $G$. In this case there exists a fan having only one maximal cone equal to $\V(G/K)$; the associated toroidal embedding is denoted by $\XX(G/H)$.
\end{definition}

If $H$ is wonderful then the embedding $\XX(G/H)$ is smooth, has a unique closed $G$-orbit and it is a {\em wonderful variety} in the sense of \cite{Lu01}. A fundamental theorem of Knop (see \cite[Corollary 7.6]{Kn96}) states that a spherically closed subgroup is wonderful.

\begin{example}
The converse of the above statement is false: for example, if $G=\SO(2n+1)$ with $n\geq 2$, then $H=\SO(2n)$ is a wonderful subgroup, with $\overline H = \N_{\SO(2n+1)}\SO(2n)\neq H$ (see \cite[Cases 7B, 8B of Table 1]{Wa96}).
\end{example}

It is possible to define canonically a minimal wonderful subgroup $\widehat H$ between $H$ and $\overline H$. As a byproduct, the automorphism groups of regular embeddings of $G/H$ are more directly related to the automorphism group of $\XX(G/\widehat H)$ than to that of $\XX(G/\overline H)$.

\begin{definition}
Let $H$ and $I$ be spherical subgroups of $G$. Then $I$ is a {\em wonderful closure} of $H$ if it is wonderful, satisfies $H\subseteq I \subseteq \overline H$, and is minimal with respect to these properties.
\end{definition}

\begin{example}
Any wonderful subgroup $H\subseteq G$ is its own wonderful closure. We give an example of $H$ having wonderful closure different from $H$ and from $\overline H$. Let $G$ be $\SO(9)$, and let $P$ be a parabolic subgroup of $G$ with Levi subgroup $L=\GL(2)\times\SO(5)$. Set $L_H=\SL(2)\times \SO(4)\subset L$, and let $H^u\subset P^u$ be the subgroup stable under conjugation by $L_H$ and such that $P^u/H^u$ is equivariantly isomorphic to $\CC^2\otimes \CC$ under the action of $L_H$. Then $H=L_HH^u$ is a spherical subgroup of $G$; it is not wonderful because it has infinite index in its normalizer. Consider now the subgroup $L_K=\GL(2)\times \SO(4)$, and $K=L_KH^u$. The group $K$ is minimal among the subgroups of $G$ strictly containing $H$ and of finite index in their normalizers. In addition $K$ is wonderful of rank $2$ (see \cite[First case 6 of Table 2]{Wa96}), so it is a wonderful closure of $H$. On the other hand the normalizer of $H$ is equal to $\overline H$ (see \cite[Second case 6 of Table 2]{Wa96}), and $\overline H$ contains $K$ strictly (indeed $[N_GH:K]=2$).
\end{example}

We will show that a wonderful closure always exists and is unique; for this we need to recall a combinatorial description due to D.\ Luna of all spherical subgroups having spherical closure equal to $\overline H$. This is based on the well known combinatorial notion of {\em augmentations} and is stated in \cite[Proposition~6.4]{Lu01}. However, here we build our discussion on some other auxiliary results from \cite[Section 6]{Lu01}: this approach will be useful in Section~\ref{s:nonlinear}.

Let us fix a spherically closed subgroup $K$, and consider the following diagram
\[
\begin{CD}
0 @>>>\Lambda(G/K) @>\overline\rho>>  \Pic^G(\XX(G/K)) @>\tau>> \Pic^G(G/K) @>>> 0 \\
@. @.                                 @VV{\sigma}V \\
@. @.                                 \Pic^G(G/B)
\end{CD}
\]
where the row is exact (see also \cite[Proposition~2.2.1]{Br07}).

The map $\tau$ is the pullback along the inclusion $G/K\to \XX(G/K)$. For $\sigma$, observe that $\XX(G/K)$ has a unique closed $G$-orbit $Z$, which is projective and therefore is equipped with a $G$-equivariant map $G/B\to Z$. The map $\sigma$ is then the pullback along the composition $G/B\to Z\to \XX(G/K)$.

The map $\overline\rho$ is defined in the following way: for any $\chi\in\Lambda(G/K)$ we pick a function $f_\chi\in\CC(G/K)^{(B)}_\chi$ and consider the $G$-stable part $D=\div(f_\chi)^G$ of $\div(f_\chi)$. Then we set $\overline\rho(\chi)=\OO_\XX(-D)$, with the unique $G$-linearization such that $C$ acts trivially on the total space of the bundle (it exists, since $C$ acts trivially on $\XX$).

These maps admit also a different interpretation, using the fact that $G=C\times G'$ and $K\supseteq C$, that $\Delta(G/K)$ is a basis of $\Pic(\XX(G/K))$ (see \cite[Proposition 2.2]{Br89}), and the isomorphisms $\Pic^G(G/K)\cong \Chi(K)$, $\Pic^G(G/B)\cong \Chi(B)$. The resulting diagram
\begin{equation} \label{eqn:diagram}
\begin{CD}
0 @>>> \Lambda(G/K) @>\overline\rho>>  \Chi(C)\times\ZZ^\Delta @>\tau>> \Chi(K)  @>>> 0\\
@. @.                                 @VV{\sigma}V \\
@. @.                                 \Chi(B)
\end{CD}
\end{equation}
where $\Delta = \Delta(G/K)$, is also described in details in \cite[Section 6.3]{Lu01}. As in \cite[Lemme 6.2.2]{Lu01}, the product $\Chi(C)\times\ZZ^\Delta$ is identified with the quotient
\[
\frac{\CC(G)^{(B\times K)}}{\CC^\times}
\]
where $B$ acts on $G$ by left translation, $K$ by right translation, and $\CC^\times$ is the multiplicative group of constant non-zero functions on $G$. Under this identification, the map $\tau$ associates to a class $[f]$ the $K$-eigenvalue of $f$, the map $\sigma$ its $B$-eigenvalue, and the map $\overline\rho$ associates to $\chi\in\Lambda(G/K)$ the class of a function in $\CC(G)^{(B\times K)}_{(\chi,0)}$. The composition $\sigma\circ\overline\rho$ is the identity on $\Lambda(G/K)$, and $\overline\rho$ can also be written as
\[
\overline\rho(\chi) = (\chi|_{C},\langle\rho_{G/K}(\cdot),\chi\rangle)
\]
(see {\em loc.cit.}).

\begin{lemma}\cite[Lemme 6.3.1, Lemme 6.3.3, Lemme 6.4.1]{Lu01}\label{lemma:lunalattice}
Let $K\subseteq G$ be a spherically closed subgroup.
\begin{enumerate}
\item The map
\[
H\to \tau^{-1}\left(\Chi(K)^H\right)
\]
is an inclusion-reversing bijection between the set of normal subgroups $H$ of $K$ such that $K/H$ is diagonalizable, and the set of subgroups of $\Chi(C)\times \ZZ^\Delta$ containing $\overline\rho(\Lambda(G/K))$.
\item If the restriction of $\sigma$ to $\tau^{-1}\left(\Chi(K)^H\right)$ is injective then $H$ is spherical.
\item If in addition $\sigma(\tau^{-1}\left(\Chi(K)^H\right))\cap \eS_G^\circ(G/K)=\varnothing$ then $\overline H=K$, where the set $\eS_G^\circ(G/K)$ is the set of all simple roots $\alpha\in \frac12\Sigma(G/K)$ such that $\langle\beta^\vee,\alpha\rangle$ is even for all simple roots $\beta\in \frac12\Sigma(G/K)$.
\end{enumerate}
\end{lemma}

Before showing existence and uniqueness of the wonderful closure, we provide some auxiliary results. They are actually already contained in \cite[Section 6]{Lu01}, we reprove them here for convenience.

\begin{lemma}\label{lemma:equation}
For any spherical subgroup $H\subseteq G$ contained and normal in another subgroup $K\subseteq G$, and all $D\in\Delta(G/H)$, we have $\pi^{H,K}(D)\in\Delta(G/K)$, and
\[
\pi_*^{H,K}(\rho_{G/H}(D))=\rho_{G/K}(\pi^{H,K}(D)).
\]
\end{lemma}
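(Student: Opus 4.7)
The plan is to exploit that $\pi=\pi^{H,K}\colon G/H\to G/K$ is a principal $K/H$-bundle. Since $H\trianglelefteq K$ we have $K\subseteq\N_GH$, and by \cite[Theorem~6.1]{Kn94} the group $\N_GH/H$ is diagonalizable; hence so is $K/H$, which acts on $G/H$ freely by right multiplication, commuting with the left $G$-action. In particular $\pi$ is smooth.

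First I would show $\pi(D)\in\Delta(G/K)$. The image is irreducible and $B$-stable, and since $G/K$ is homogeneous every $B$-stable prime divisor is a color; it therefore suffices to check that $\overline{\pi(D)}$ has codimension one in $G/K$. The group $K/H$ permutes the finite set $\Delta(G/H)$, so its identity component $(K/H)^\circ$ acts trivially on it, and in particular stabilizes $D$. By freeness of the right action, the $(K/H)^\circ$-orbits on $D$ have dimension $\dim K/H$ and are contained in fibers of $\pi|_D$, while fibers of $\pi$ itself have exactly this dimension. Hence the generic fiber of $\pi|_D\colon D\to\overline{\pi(D)}$ has dimension $\dim K/H$, and
\[
\dim\overline{\pi(D)}=\dim D-\dim K/H=\dim G/K-1.
\]

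For the identity $\pi_*^{H,K}(\rho_{G/H}(D))=\rho_{G/K}(\pi(D))$, I would pair both sides with an arbitrary $\chi\in\Lambda(G/K)$. Picking $f\in\CC(G/K)^{(B)}_\chi$, the pullback $\pi^*f$ lies in $\CC(G/H)^{(B)}_\chi$, and, using the defining formula for $\rho$ together with the description of $\pi_*^{H,K}$ as the dual of the lattice inclusion $(\pi^{H,K})^*\colon\Lambda(G/K)\to\Lambda(G/H)$, the two pairings reduce to $\nu_D(\pi^*f)$ and $\nu_{\pi(D)}(f)$ respectively. Smoothness of $\pi$ gives $\div(\pi^*f)=\pi^*\div(f)$, and since the scheme-theoretic preimage of a prime divisor under a smooth morphism is reduced, each irreducible component of $\pi^{-1}(E)$ appears with multiplicity one in $\pi^*E$; the coefficients of $D$ in $\div(\pi^*f)$ and of $\pi(D)$ in $\div(f)$ therefore coincide.

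The main obstacle is the codimension count in the first step: a priori a color of $G/H$ could map dominantly onto $G/K$, and this is ruled out precisely by the observation that $(K/H)^\circ$ stabilizes each color and acts along the fibers of $\pi$ with orbits of full fiber dimension. Once this is in place, the multiplicity computation is routine smooth descent.
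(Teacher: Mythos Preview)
Your proof is correct, but the route differs from the paper's on both halves.

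For the first statement, the paper observes that because $H$ is normal in $K$, the right $K$-action stabilizes the open set $BH\subseteq G$, hence $BH=BK$; thus $G\setminus BH=G\setminus BK$, and the colors of $G/H$ and of $G/K$ are exactly the images of the irreducible components of this common complement. This gives $\pi^{H,K}(D)\in\Delta(G/K)$ immediately, together with the stronger fact that $\pi^{H,K}$ induces a \emph{bijection} of color sets. Your dimension count via the $(K/H)^\circ$-action is perfectly valid, but it only shows that the image is a divisor; the equality $BH=BK$ is both quicker and yields more.

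For the second statement, the paper constructs by hand a local equation for $D$ that is pulled back from $G/K$: it lifts everything to $G$, uses factoriality of $G$ to take a global equation for each $B$-stable prime divisor over $\pi^{H,K}(D)$, multiplies them, and divides by a suitable left translate to obtain a right-$K$-invariant function. Your argument---pairing with $\chi\in\Lambda(G/K)$, pulling back $f\in\CC(G/K)^{(B)}_\chi$, and invoking that smooth pullback of divisors preserves multiplicities---is more conceptual and avoids the explicit construction on $G$. Either way one is computing the ramification of $\pi^{H,K}$ along $D$, which is trivial since the map is smooth; your formulation makes this transparent.
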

\begin{proof}
Since $H$ is normal in $K$, then $K$ stabilizes the open set $BH\subseteq G$ acting by right multiplication on $G$ (see \cite[First part of the proof of Proposition 5.1]{BP87}). The complement $G\smallsetminus BH=G\smallsetminus BK$ is the union of $(\pi^{\{e_G\},H})^{-1}(E)$ for $E$ varying in $\Delta(G/H)$, and also the union of $(\pi^{\{e_G\},H})^{-1}(F)$ for $F$ varying in $\Delta(G/K)$, whence the first statement.

For the second statement, it is enough to show that a local equation of $D$ on $G/H$ can be chosen to be the pull-back of a function on $G/K$ along $\pi^{H,K}$. Let $E_1,\ldots,E_n$ be all the distinct $B$-stable prime divisors of $G$ such that $\pi^{\{e_G\},K}(E_i)=\pi^{H,K}(D)$. Since $G$ is factorial we can choose a global equation $f_i\in\CC[G]$ for each $E_i$, and consider the product $f=f_1\cdot\ldots\cdot f_n$.

The divisor $\div(f)$ on $G$ is $B$-stable under the left translation action of $G$ on itself, but none of its components is $G$-stable therefore there exists an element $g\in G$ such that the function $f_0\colon x\mapsto  f(g x)$ doesn't vanish on any divisor $E_i$. On the other hand $\div(f)$ is $K$-stable under the right translation action of $G$ on itself, thus $f$ is $K$-semiinvariant under this action. The function $f_0$ is then also $K$-semiinvariant, with same $K$-eigenvalue of $f$. It follows that
\[
F = \frac{f}{f_0}
\]
is $K$-invariant with respect to the right translation action. In other words $F=(\pi^{\{e_G\}, K})^*(\widetilde F)$ for some $\widetilde F\in\CC(G/K)$.

Now for some $i_0$ the divisor $E_{i_0}$ satisfies $\pi^{\{e_G\},H}(E_{i_0})=D$. The function $F$ is equal to the pull-back of $(\pi^{H,K})^*(\widetilde F)$ along $\pi^{\{e_G\},H}$ and is a local equation of $E_{i_0}$ on $G$, hence $(\pi^{H,K})^*(\widetilde F)$ is a local equation of $\pi^{\{e_G\},H}(E_{i_0})=D$ on $G/H$: the lemma follows.
\end{proof}

If we suppose in the above lemma that $\overline H=K$ then $\Delta=\Delta(G/K)$ and $\Delta(G/H)$ are identified via the map $\pi^{H,K}$ compatibly with the maps $\rho_{G/H}$ and $\rho_{G/K}$. In this case we can extend the map $\overline\rho$ to $\Lambda(G/H)$ as follows.

\begin{definition}
For any $H$ and $K$ as in Lemma~\ref{lemma:equation}, with $\overline H=K$, we denote again by $\overline\rho$ the extension of the above map $\overline\rho\colon\Lambda(G/K)\to\Chi(C)\times\ZZ^\Delta$ to $\Lambda(G/H)$ defined by:
\[
\overline\rho(\chi) = (\chi|_{C},\langle\rho_{G/H}(\cdot),\chi\rangle).
\]
\end{definition}

\begin{lemma}
For any $H$ and $K$ as in Lemma~\ref{lemma:equation} satisfying $\overline H=K$, the map  $\overline\rho\colon\Lambda(G/H)\to\Chi(C)\times\ZZ^\Delta$ is injective and the composition $\sigma\circ\overline\rho$ is the identity on $\Lambda(G/H)$. Moreover
\begin{equation}\label{eqn:lambdachar}
\overline\rho(\Lambda(G/H))=\tau^{-1}\left(\Chi(K)^H\right).
\end{equation}
\end{lemma}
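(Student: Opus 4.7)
The plan is to realize $\overline\rho(\chi)$ concretely as the class, modulo constants, of the pullback to $G$ of a $B$-semiinvariant function on $G/H$. First I would fix $\chi\in\Lambda(G/H)$ and pick $f_\chi\in\CC(G/H)^{(B)}_\chi$. The line $\CC(G/H)^{(B)}_\chi$ is one-dimensional, and since $H$ is normal in $K$ the quotient $K/H$ acts on $G/H$ by right translation, commuting with the left $B$-action; hence this line is $K/H$-stable, so $f_\chi$ is an eigenvector for some character of $K$ that is trivial on $H$, that is, for some element of $\Chi(K)^H$.

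Next I would pull $f_\chi$ back along $G\to G/H$ to obtain $\widetilde f_\chi\in\CC(G)^{(B\times K)}$, with $B$-eigenvalue $\chi$ and $K$-eigenvalue in $\Chi(K)^H$. Using the identification $\CC(G)^{(B\times K)}/\CC^\times \cong \Chi(C)\times\ZZ^\Delta$ (valid since $K=\overline H$ is spherically closed), I would recognise the class $[\widetilde f_\chi]$ from its divisor on $G$ and its $B$-eigenvalue: its divisor equals $\sum_{D\in\Delta(G/H)}\nu_D(f_\chi)\,(\pi^{\{e_G\},H})^{-1}(D)$, and by Lemma~\ref{lemma:equation} the colors of $G/H$ are identified with $\Delta=\Delta(G/K)$ in a manner compatible with $\rho_{G/H}$ and $\rho_{G/K}$. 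Combined with the defining relation $\langle\rho_{G/H}(D),\chi\rangle=\nu_D(f_\chi)$, this should yield $[\widetilde f_\chi]=\overline\rho(\chi)$.

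Given this identification, the claims would follow at once. Since $\sigma$ extracts the $B$-eigenvalue, one has $\sigma(\overline\rho(\chi))=\chi$, which simultaneously gives injectivity of $\overline\rho$ and the identity $\sigma\circ\overline\rho=\id$; meanwhile $\tau(\overline\rho(\chi))$ equals the right $K$-eigenvalue of $\widetilde f_\chi$, which lies in $\Chi(K)^H$, so the image of $\overline\rho$ is contained in $\tau^{-1}(\Chi(K)^H)$. For the reverse inclusion I would take $y\in\tau^{-1}(\Chi(K)^H)$ with representative $f\in\CC(G)^{(B\times K)}$: since $\tau(y)$ vanishes on $H$, $f$ is $H$-invariant under right translation and hence descends to a $B$-semiinvariant $\bar f$ on $G/H$ of some weight $\chi\in\Lambda(G/H)$, and the preceding paragraph then forces $\overline\rho(\chi)=y$. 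The technical heart of the argument is the recognition of $[\widetilde f_\chi]$ as $\overline\rho(\chi)$ inside $\Chi(C)\times\ZZ^\Delta$; this is really just an unwinding of the identification described in diagram~(\ref{eqn:diagram}) combined with Lemma~\ref{lemma:equation}, so I do not foresee any serious obstacles beyond keeping track of conventions.
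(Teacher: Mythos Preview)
Your proposal is correct and follows essentially the same route as the paper: both identify $\overline\rho(\chi)$ with the class of the pullback $\widetilde f_\chi\in\CC(G)^{(B\times K)}$, read off $\sigma\circ\overline\rho=\id$ from the $B$-eigenvalue, and prove \eqref{eqn:lambdachar} by observing that $\tau([f])\in\Chi(K)^H$ is equivalent to $f$ being right $H$-invariant, i.e.\ descending to $G/H$. The only noticeable difference is in how right $K$-semiinvariance of $\widetilde f_\chi$ is obtained: you use that $K/H$ acts on the one-dimensional line $\CC(G/H)^{(B)}_\chi$, while the paper argues that $\div(\widetilde f_\chi)$ is $K$-stable because $H$ and $K$ have the same orbits on the components of $G\setminus BH$ (this being a consequence of the identification $\Delta(G/H)\cong\Delta(G/K)$). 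Your argument here is marginally cleaner; the paper's has the small advantage that the inclusion $(\CC(G)^{(B)})^H\subseteq\CC(G)^{(B\times K)}$ it establishes immediately displays $\overline\rho$ as a lattice inclusion, so the verification that $[\widetilde f_\chi]=\overline\rho(\chi)$ (your ``technical heart'') is absorbed into the definition rather than checked by computing divisors.
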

\begin{proof}
First we observe that 
\[
(\CC(G)^{(B)})^H \subseteq \CC(G)^{(B\times K)}.
\]
Indeed, let $f\in(\CC(G)^{(B)})^H$. Then $\div(f)$ is $B$-stable under left translation, and $H$-stable under right translation. But $\Delta(G/H)$ and $\Delta(G/K)$ are identified via $\pi^{H,K}$, which implies that $H$ and $K$ act on the set of irreducible components of $G\smallsetminus BH=G\smallsetminus BK$ with the same orbits. Therefore $\div(f)$ is $K$-stable, and $f\in\CC(G)^{(B\times K)}$.

It follows that the extended map $\overline\rho$ is identified with the inclusion
\[
\Lambda(G/H) \cong \frac{(\CC(G)^{(B)})^H}{\CC^\times} \subseteq \frac{\CC(G)^{(B\times K)}}{\CC^\times}.
\]
The first two statements follow.

Consider now $(\gamma,(n_D)_{D\in\Delta})\in \Chi(C)\times\ZZ^\Delta$ as the class modulo $\CC^\times$ of a rational function $f\in \CC(G)^{(B\times K)}$, with $B$-eigenvalue $\lambda=\sigma(\gamma,(n_D)_{D\in\Delta})$ and $K$-eigenvalue $\omega=\tau(\gamma,(n_D)_{D\in\Delta})$. Then $\omega$ is trivial on $H$ if and only if $f\in\CC(G)^H$, if and only if $f$ is the pull-back on $G$ of a function in $\CC(G/H)$, if and only if $\lambda\in\Lambda(G/H)$.
\end{proof}

\begin{proposition}\label{prop:wonderfulclosure}
Let $H\subseteq G$ be a spherical subgroup, set $K=\overline H$ and $\Xi=\Span_\ZZ\Sigma(G/H)$. Then
\[
\Lambda(G/K) \subseteq \Xi \subseteq \Lambda(G/H).
\]
The subgroup $\widehat H\subseteq K$ associated with $\overline\rho(\Xi)$ via the map of Lemma~\ref{lemma:lunalattice} is the unique wonderful closure of $H$. It has the same dimension of $\overline H$, and it is the unique wonderful subgroup between $H$ and $\overline H$ that satisfies $\Sigma(G/H)=\Sigma(G/\widehat H)$. Moreover, the spherical closure of $\widehat H$ is $\overline H$.
\end{proposition}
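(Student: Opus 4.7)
The plan is to produce $\widehat H$ directly from the sublattice $\Xi$ via the bijection of Lemma~\ref{lemma:lunalattice}, and then read off its invariants using Lemma~\ref{lemma:inclusion} and formula~(\ref{eqn:lambdachar}). The proof will proceed in three main stages: (i) establishing the chain of inclusions, (ii) the construction of $\widehat H$ together with the identifications $\overline{\widehat H}=\overline H$, $\Lambda(G/\widehat H)=\Xi$, $\Sigma(G/\widehat H)=\Sigma(G/H)$, and the wonderfulness of $\widehat H$, and (iii) uniqueness, minimality, and the dimension equality.

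For the inclusions, $\Xi\subseteq\Lambda(G/H)$ is immediate from $\Sigma(G/H)\subseteq\Lambda(G/H)$. For $\Lambda(G/K)\subseteq\Xi$, note that $K=\overline H$ is spherically closed and hence wonderful by Knop's theorem, so $\Sigma(G/K)$ is a $\ZZ$-basis of $\Lambda(G/K)$; the inclusion therefore reduces to $\Sigma(G/K)\subseteq\Xi$, which is a known fact asserting that each spherical root of the spherically closed model is of the form $\sigma$ or $2\sigma$ for some $\sigma\in\Sigma(G/H)$ (see \cite{Lu01}).

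Given these inclusions, $\overline\rho(\Xi)\subseteq\Chi(C)\times\ZZ^\Delta$ contains $\overline\rho(\Lambda(G/K))$ and $\sigma$ is injective on it (since $\sigma\circ\overline\rho=\id$ on $\Lambda(G/H)\supseteq\Xi$), so Lemma~\ref{lemma:lunalattice} yields a spherical subgroup $\widehat H$, normal in $K$ with $K/\widehat H$ diagonalizable and $H\subseteq\widehat H\subseteq K$. Before applying formula~(\ref{eqn:lambdachar}) to $\widehat H$, I would first prove $\overline{\widehat H}=K$: the inclusion $K\subseteq\overline{\widehat H}$ holds because $K$ normalizes $\widehat H$ and Lemma~\ref{lemma:equation} gives a bijection between the colors of $G/\widehat H$ and those of $G/K$, on which $K$ acts trivially; the reverse inclusion is obtained by mimicking the argument of the Proposition preceding Lemma~\ref{lemma:inclusion} with $\widehat H$ in place of $H$, using that the colors of $G/\widehat H$ generate $\Pic^G(G/\widehat H)\cong\Chi(\widehat H)$ and that $H$ is cut out in $\widehat H$ by characters of $\widehat H/H$. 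Formula~(\ref{eqn:lambdachar}) then gives $\Lambda(G/\widehat H)=\Xi$. By Lemma~\ref{lemma:inclusion}, $\V(G/\widehat H)=\pi^{H,\widehat H}_*(\V(G/H))$, and
\[
\ker\pi^{H,\widehat H}_* \,=\, \Lambda(G/\widehat H)^\perp \,=\, \Xi^\perp \,=\, \Sigma(G/H)^\perp \,=\, \V^\ell(G/H),
\]
so the facets of the two cones are in bijection. Since $\Sigma(G/H)$ is a $\ZZ$-basis of $\Xi=\Lambda(G/\widehat H)$ by construction, its elements remain primitive there and coincide with the indivisible defining inequalities of $\V(G/\widehat H)$; hence $\Sigma(G/\widehat H)=\Sigma(G/H)$ is a basis of $\Lambda(G/\widehat H)$, proving wonderfulness.

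For uniqueness and minimality, let $J$ be any wonderful subgroup with $H\subseteq J\subseteq \overline H$; from $\pi^{J,\overline H}_*\circ\pi^{H,J}_*=\pi^{H,\overline H}_*$ one gets $\ker\pi^{H,J}_*\subseteq\V^\ell(G/H)$, and the same analysis yields $\Sigma(G/J)=\Sigma(G/H)$, so wonderfulness forces $\Lambda(G/J)=\Xi=\Lambda(G/\widehat H)$ and the Luna bijection gives $J=\widehat H$. The equality $\dim\widehat H=\dim\overline H$ follows because $\overline H/\widehat H$ is diagonalizable with character lattice isomorphic to $\Xi/\Lambda(G/K)$, a finite group since both have the same $\QQ$-rank. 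The main obstacle is the inclusion $\Sigma(G/K)\subseteq\Xi$, which cannot be derived from the framework recalled in the paper and must appeal to the known behaviour of spherical roots under spherical closure; a secondary point requiring care is the primitivity of $\Sigma(G/H)$ inside $\Lambda(G/\widehat H)$, which is automatic here only because $\Sigma(G/H)$ is by construction a $\ZZ$-basis of the whole lattice $\Xi$.
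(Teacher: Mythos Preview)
Your overall strategy matches the paper's closely: construct $\widehat H$ via the Luna bijection applied to $\Xi$, identify $\Lambda(G/\widehat H)$ with $\Xi$, and read off the spherical roots from the valuation cone. There is, however, a genuine gap in your uniqueness/minimality step.

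You assert that for \emph{any} wonderful $J$ with $H\subseteq J\subseteq\overline H$, ``the same analysis yields $\Sigma(G/J)=\Sigma(G/H)$''. This is false: the example recalled just before the definition of wonderful closure ($G=\SO(2n+1)$, $H=\SO(2n)$, $\overline H=\N_GH$) already gives two distinct wonderful subgroups between $H$ and $\overline H$, namely $H$ and $\overline H$, with different spherical roots. The step that breaks is the primitivity argument. For $\widehat H$ you correctly use that $\Sigma(G/H)$ is a $\ZZ$-basis of $\Xi=\Lambda(G/\widehat H)$ \emph{by construction}, so its elements remain primitive in $\Lambda(G/\widehat H)$. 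For a general wonderful $J$ one only gets $\ker\pi^{H,J}_*=\V^\ell(G/H)$ (combining your inclusion with $\pi^{H,J}_*(\V^\ell(G/H))\subseteq\V^\ell(G/J)=\{0\}$), hence $\Lambda(G/J)\otimes_\ZZ\QQ=\Xi\otimes_\ZZ\QQ$; from this you can only conclude that each $\sigma'\in\Sigma(G/J)$ is a \emph{positive integer multiple} of some $\sigma\in\Sigma(G/H)$. This is exactly what the paper proves, yielding $\Lambda(G/J)\subseteq\Xi=\Lambda(G/\widehat H)$ and hence $J\supseteq\widehat H$ via the Luna correspondence. Minimality (and therefore uniqueness of the wonderful closure) follows; the characterization of $\widehat H$ as the unique wonderful subgroup in the range with $\Sigma(G/\widehat H)=\Sigma(G/H)$ is then a separate, immediate consequence.

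A smaller remark: for the inclusion $\Lambda(G/K)\subseteq\Xi$ you appeal to the external description of spherical roots under spherical closure. The paper obtains this inclusion directly from $\ker\pi^{H,K}_*=\V^\ell(G/H)$ and $\pi^{H,K}_*(\V(G/H))=\V(G/K)$, which force each element of $\Sigma(G/K)$ to be a positive integer multiple of some element of $\Sigma(G/H)$ --- the same mechanism as above, and self-contained within the framework already set up.
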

\begin{proof}
The inclusion $\Xi\subseteq \Lambda(G/H)$ is obvious. The map $\pi^{H,K}_*\colon \N(G/H)\to \N(G/K)$ has kernel $\V^\ell(G/H)$, and satisfies $\pi_*(\V(G/H)) = \V(G/K)$. The other inclusion $\Lambda(G/K)\subseteq \Xi$ follows. Hence the subgroup $\widehat H$ contains $H$.

The lattice $\Lambda(G/\widehat H) = \Xi$ has basis $\Sigma(G/H)$ since the spherical roots of a spherical variety are linearly independent. Since $\Lambda(G/\widehat H)$ has finite index inside $\V^\ell(G/H)^\perp$ and $\pi^{H,\widehat H}(\V(G/H))=\V(G/\widehat H)$ we deduce that $\Sigma(G/H)=\Sigma(G/\widehat H)$.

If $\widetilde H$ is another wonderful subgroup such that $H\subseteq \widetilde H\subseteq \overline H$, then $\Lambda(G/\widetilde H)$ has also finite index in $\V^\ell(G/H)^\perp$, and $\Sigma(G/\widetilde H)$ is equal to $\Sigma(G/H)$ up to taking (positive) multiples of the elements of the latter. The dimension, minimality and uniqueness properties of $\widehat H$ follow, since $\Lambda(G/\widetilde H)\subseteq \Lambda(G/\widehat H)$ implies $\widetilde H\supseteq \widehat H$.

The last assertion follows from part (3) of Lemma~\ref{lemma:lunalattice}. Indeed, by definition we have $\tau^{-1}(\Chi(K)^{\widehat H})=\overline\rho(\Xi)$; now the composition $\sigma\circ\overline\rho$ is the identity on $\Lambda(G/H)$, which implies that $\sigma(\tau^{-1}(\Chi(K)^{\widehat H}))=\Xi$.

Suppose that $\Xi\cap \eS_G^\circ(G/K)\neq\varnothing$; then there exists a simple root $\alpha\in \Xi$ such that $2\alpha\in\Sigma(G/K)$. From $\ker\pi_*^{H,K}=\V^\ell(G/H)$ it also follows that $\alpha\in\Sigma(G/H)$. At this point, after \cite[Section 1.4]{Lu01}, there exist two colors of $G/H$ not stable under $P_\alpha$, and there exists only one such color in $G/K$, where $P_\alpha$ is the minimal parabolic subgroup of $G$ containing $B$ and associated with $\alpha$. This contradicts the assumption $\overline H=K$, therefore $\Xi\cap \eS_G^\circ(G/K)=\varnothing$ and the spherical closure of $\widehat H$ is $K$.
\end{proof}

\section{Automorphisms stabilizing all $G$-orbits}\label{s:allGorbits}
From now on, $X$ denotes a complete $G$-regular embedding, with open $G$-orbit $G/H$.

\begin{definition}
For any subset $\D\subseteq \partial_G X$ of $G$-stable prime divisors we define
\[
\Autz(X,\D) = \left\{ \phi\in\Autz(X)\,\middle\vert\, \phi(D)=D,\;\;\forall D\in\D \right\}.
\]
\end{definition}

Since $X$ is $G$-regular, the group $\Autz(X,\partial_G X)$ is also the connected group of automorphisms of $X$ stabilizing each $G$-orbit.

We recall now some results from \cite{BB96} (see also \cite{Br07}). The group $\Autz(X)$ is a linear algebraic group, with Lie algebra
\[
\Lie \Autz(X) = H^0(X,\T_X)
\]
where $\T_X$ is the sheaf of sections of the tangent bundle of $X$. The structure of $G$-module on $\Lie \Autz(X)$, induced by the adjoint action of $\theta_{G,X}(G)\subseteq \Autz(X)$, is given in \cite[Proposition 4.1.1]{BB96} in terms of global sections of the line bundles $\OO_X(D)$ where $D\in\partial_G X$.

Namely, there exists an exact sequence of $G$-modules
\begin{equation}\label{eqn:exact}
0 \to \Lie\Autz(X,\partial_G X) \to \Lie\Autz(X)\to\bigoplus_{D\in\partial_G X}\frac{H^0(X,\OO_X(D))}{\CC}\to 0.
\end{equation}
Moreover, for any $\D\in\partial_GX$ the Lie algebra of the subgroup $\Autz(X, \D)$ is the inverse image of the sum
\[
\bigoplus_{D\in(\partial_G X)\smallsetminus\D}\frac{H^0(X,\OO_X(D))}{\CC}.
\]

We recall that the exact sequence (\ref{eqn:exact}) is induced by the exact sequence of sheaves
\begin{equation}\label{eqn:exactsheaves}
0 \to \mathcal S_X \to \T_X \to \bigoplus_{D\in\partial_GX} \OO_X(D)\otimes_{\OO_X}\OO_{D} \to 0.
\end{equation}
Here $\mathcal S_X$ is the subsheaf of $\mathcal T_X$ preserving the ideal sheaf of $Y=\bigcup_{D\in\partial_GX}D$. We also have that $\mathcal S_X$ is generated by the image of the map $\mathcal O_X\otimes \Lie G\to \mathcal T_X$ induced by the $G$-action on $X$. The third map in the exact sequence (\ref{eqn:exactsheaves}) can be described locally in a neighborhood of a point $x\in X$ as follows:
\begin{equation}\label{eqn:vectorfields}
\sum_{i=1}^{m} a_i\partial_i \mapsto  \sum_i\left(\frac{a_i}{x_i}\otimes 1\right).
\end{equation}
Here we fix a regular system of parameters $x_1,\ldots,x_{\dim X}$ such that a local equation of $Y$ is $x_1\cdot\ldots\cdot x_m$. Moreover we denote by $(\partial_1,\ldots,\partial_{\dim X})$ the dual basis of the local basis $(dx_1,\ldots,dx_{\dim X})$ of $\Omega^1_X$ in $x$.

\begin{definition}
Let $0\neq\gamma\in\Chi(B)$. If it exists, we denote by $X(\gamma)$ the uniquely determined element of $\partial_G X$ such that $H^0(X,\OO_X(X(\gamma)))^{(B)}_\gamma\neq \varnothing$.
\end{definition}

A particular case of $\Autz(X)$ has been studied in \cite{Pe09}, where $X$ is a wonderful variety. Recall that $C$ acts trivially on any wonderful $G$-variety, hence we can consider $G'$-varieties without loss of generality. Moreover, in this case $\Autz(X,\D)$ is always semisimple and $X$ is wonderful under its action (see \cite[Theorem 2.4.2]{Br07}). It is possible to summarize the results of loc.cit.\ as follows.

\begin{theorem}\cite{Pe09}\label{thm:wonderful}
Let $\XX$ be a wonderful $G'$-variety and $\D\subseteq \partial_{G'} \XX$. Decompose $G'$ and $\XX$ into products
\[
G' = G'_1 \times \ldots \times G'_n, \quad \XX = \XX_1\times \ldots \times \XX_n,
\]
with a maximal number of factors in such a way that $G'_i$ acts non-trivially only on the factor $\XX_i$ for all $i\in\{1,\ldots,n\}$. Then
\[
\Autz(\XX,\D)= \Autz(\XX_1, \D_1) \times \ldots \times \Autz(\XX_n,\D_n),
\]
where $\D_i=\{ D \cap \XX_i \,|\, D\in \D\}\subseteq \partial_{G'_i} \XX_i$ and $\Autz(\XX_i,\D_i)$ acts only on $\XX_i$. Moreover, if the image of $G'_i$ in $\Autz(\XX_i,\D_i)$ is a proper subgroup, then $(G'_i,\XX_i)$ appears in the lists of ``exceptions'' of \cite[Sections 3.2 -- 3.6]{Pe09}. If $\D=\partial_{G'} \XX$ then all such exceptional factors have rank $0$ or $1$, and the rank of $\XX$ under the action of $G'$ and of $\Autz(\XX,\partial_{G'} \XX)$ coincide.
\end{theorem}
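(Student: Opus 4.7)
The plan is to combine the exact sequence (\ref{eqn:exact}) with the classification theory of wonderful varieties. First I would invoke \cite[Theorem 2.4.2]{Br07} to ensure that $\Autz(\XX, \D)$ is semisimple and that $\XX$ is itself wonderful under its action; this places both $G'$ and $\Autz(\XX, \D)$ in the framework of semisimple wonderful actions on the same variety.

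For the product decomposition, a wonderful variety of a semisimple group canonically factors according to the simple factors of the acting group, each simple factor acting non-trivially on exactly one piece. Applying this to the $\Autz(\XX, \D)$-action gives $\XX = \XX_1 \times \cdots \times \XX_n$ with $n$ maximal. The $G'$-decomposition must refine (possibly coarsely) this one, forcing each simple factor of $G'$ to act on exactly one $\XX_i$ and yielding $G'_i \subseteq \Autz(\XX_i, \D_i)$, with $\D_i$ the natural trace of $\D$ on $\XX_i$.

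The core of the argument is the identification of the exceptional factors. Working one factor at a time, I would apply (\ref{eqn:exact}) to $\XX_i$: the quotient $\Lie \Autz(\XX_i, \D_i) / \Lie \theta_{G'_i, \XX_i}(G'_i)$ injects into $\bigoplus_{D \in (\partial_{G'_i}\XX_i) \setminus \D_i} H^0(\XX_i, \OO_{\XX_i}(D))/\CC$, whose $B$-weights are explicitly computed in \cite[Proposition 4.1.1]{BB96} from the spherical roots and colors of $\XX_i$. Any new $B$-weight must extend to a simple root of some reductive enlargement of $G'_i$ acting on $\XX_i$ and compatible with its spherical structure, which is extremely restrictive. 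A case-by-case analysis using the Akhiezer--Brion classification of rank $1$ wonderful varieties, Wasserman's rank $2$ list, and rigidity in higher rank produces the explicit families appearing in \cite[\S\S3.2 -- 3.6]{Pe09}. This combinatorial verification in higher rank is the main obstacle, and it is where one expects to spend most of the effort.

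For the case $\D = \partial_{G'}\XX$, one notes that $\Autz(\XX_i, \partial_{G'_i}\XX_i)$ preserves the full $G'_i$-orbit structure of $\XX_i$, which combined with the wonderful hypothesis sharply restricts the possible exceptions to rank at most $1$. In the short resulting list one then verifies directly that the spherical rank of $\XX_i$ is the same under $G'_i$ and under $\Autz(\XX_i, \partial_{G'_i}\XX_i)$, since the weight lattice is determined by the invariant valuations, which depend only on the orbit structure that is now preserved.
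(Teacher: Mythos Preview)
The paper does not prove this theorem: it is stated as a summary of results from \cite{Pe09}, with no argument given in the present text. So there is no ``paper's own proof'' to compare against here; the result functions as a black box imported from the cited reference.

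Your sketch is broadly in the spirit of \cite{Pe09}: reduce to indecomposable wonderful factors, use the exact sequence (\ref{eqn:exact}) to bound $\Lie\Autz(\XX_i,\D_i)/\Lie\theta_{G'_i,\XX_i}(G'_i)$ by sections of boundary divisors, and then run a case analysis via the low-rank classifications (Akhiezer, Wasserman) together with combinatorial constraints in higher rank. Two points deserve care. First, your product-decomposition paragraph has the logic slightly inverted: the theorem fixes the maximal decomposition with respect to $G'$ and then asserts that $\Autz(\XX,\D)$ respects it, not the other way around; since $\Autz(\XX,\D)\supseteq\theta_{G',\XX}(G')$, the $\Autz$-decomposition could a priori be coarser, and one must argue it is not. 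Second, your final sentence---that rank is preserved because ``the weight lattice is determined by the invariant valuations, which depend only on the orbit structure''---is not a valid shortcut: the lattice $\Lambda$ is not determined by the orbit stratification alone, and in \cite{Pe09} the equality of ranks is read off from the explicit short list of rank~$\leq 1$ exceptions rather than from a general principle.
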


The invariants of $\XX_i$ under the action of $\Autz(\XX_i,\D_i)$ are also listed in loc.cit., for all cases where the image of $G'_i$ in $\Autz(\XX_i,\D_i)$ is a proper subgroup.

Now, the group $\Autz(X,\partial_G X)$ has been described in \cite{Br07}. Namely, thanks to \cite[Theorem 4.4.1, part (1)]{Br07}, there exists a split exact sequence of Lie algebras
\begin{equation}\label{eqn:partial}
0 \to \frac{\Lie \overline H}{\Lie H } \to \Lie\Autz(X,\partial_G X) \to \Lie\Autz(\XX(G/\overline H),\partial_G\XX(G/\overline H))\to 0.
\end{equation}
The same holds if we consider the variety $\XX(G/\widehat H)$ instead of $\XX(G/\overline H)$. Indeed, the results of \cite[Section 4.4]{Br07} hold (with same proofs) if we replace the spherical closure of $H$ with its wonderful closure.

It follows that $\Autz(X,\partial_G X)$ is reductive, its connected center is $(\overline H/H)^\circ=(\widehat H/H)^\circ$, and its semisimple part can be computed using Theorem \ref{thm:wonderful} and the lists of \cite{Pe09}. 

The invariants of $X$ under the action of $\Autz(X,\partial_G X)$ are easily recovered. Namely, if we choose a Borel subgroup of $\Autz(X,\partial_G X)$ containing the image of $B$, then colors (see \cite[Theorem 4.4.1, part (2)]{Br07}) and boundary divisors are the same of the $G$-action, which also induces a natural identification of $\Lambda_G(X)$ and $\Lambda_{\Autz(X,\partial_G X)}(X)$.

The spherical roots for the $\Autz(X,\partial_G X)$-action and the parabolic subgroup $\P_{\Autz(X,\partial_G X)}(X)$ are the same as those of $\XX(G/\widehat H)$; for the factors of $G$ that do not map surjectively onto factors of $\Autz(X,\partial_G X)$ these invariants are computed in \cite[Sections 3.2 -- 3.3]{Pe09}.

Finally, the structure of the orbits of $\Autz(X,\partial_G X)$ and of $G$ on $X$ coincide, which implies the equality of the fans of polyhedral convex cones of $X$ with respect to the two actions.

\begin{example}
Let $V$ be the irreducible $8$-dimensional $\Spin(7)$-module. Consider $G=\GG_m\times \Spin(7)$ acting on $X=\Bl_{[0,1]}(\PP(V\oplus\CC))$. Then $X$ is a regular $G$-variety, with one $G$-color (corresponding to an hyperplane in $V$) and two $G$-stable prime divisors (one given by $V$ itself, the other corresponding to the cone in $V$ over a smooth quadric in $\PP(V)$). We have $\Sigma_G(X)=\{\alpha_1+2\alpha_2+3\alpha_3\}$ and $\eS^p_G(X)=\{\alpha_1,\alpha_2\}$. The groups $\overline H$ and $\widehat H$ are equal, the variety $\XX=\XX(G/\overline H)$ is $\PP(V)=\PP^7$ and $G$ acts on it via its quotient $\PSO(7)$. It occurs in \cite[Section 3.3]{Pe09}; it has one $G$-color and only one $G$-stable prime divisor.

Notice that the natural map between the open $G$-orbits extends to a map $X\to \XX$, and that taking inverse images does not induce a bijection between $\partial \XX$ and $\partial X$. Nevertheless the automorphism groups stabilizing the whole boundary correspond, up to a torus factor and up to central isogeny: we have $\Autz(X,\partial X)=\GG_m\times \SO(8)$ and $\Autz(\XX,\partial \XX)=\PSO(8)$. The other invariants behave as above: the $\Autz(X,\partial X)$-colors and $\Autz(X,\partial X)$-stable prime divisors are the same of the $G$-action, and the same happens for $\XX$. Finally, we have $\Sigma_{\Autz(X,\partial X)}(X)=\Sigma_{\Autz(\XX,\partial \XX)}(\XX)=\{2\alpha_1+2\alpha_2+\alpha_3+\alpha_4\}$ and $\eS^p_{\Autz(X,\partial X)}(X)=\eS^p_{\Autz(\XX,\partial \XX)}(\XX)=\{\alpha_1,\alpha_2\}$.
\end{example}

\section{Relating $\Autz(X)$ to $\Autz(\XX)$}\label{s:relating}
From now on, $\XX=\XX(G/\widehat H)$ denotes the wonderful embedding of $G/\widehat H$. As a consequence of the last section, we assume that $\theta_{G,X}(G)=\Autz(X,\partial_G X)$ and that $\theta_{G,\XX}(G)=\Autz(\XX,\partial_G \XX)$. Indeed, if this is not the case we may first apply Theorem \ref{thm:wonderful} to $\XX$, replace $G_i'$ with the universal cover of $\Autz(\XX_i,\partial_{G_i} \XX_i)$ for all $i$ such that these groups are different, and then replace $C$ with $C\times (\widehat H/H)^\circ$.

\begin{lemma}\label{lemma:imageell}
The map $\pi^{H,\widehat H}\colon G/H\to G/\widehat H$ extends to a surjective $G$-equivariant map $\pi\colon X\to \XX$. An element $D\in\partial_GX$ is mapped surjectively onto $\XX$ if and only if $D\in(\partial_GX)^\ell$.
\end{lemma}
\begin{proof}
The fan $\F_G(\XX)$ contains the unique maximal cone $\V_G(\XX)$. Then, since $X$ is toroidal, the extended map $\pi\colon X\to \XX$ exists by \cite[Theorem 4.1]{Kn96}.

The condition $\pi(D)=\XX$ is equivalent to the condition that $\pi(Y)=G/\widehat{H}$, where $Y\subseteq X$ is the union of $G/H$ and the open $G$-orbit of $D$. This is also equivalent to the fact that the map $\pi^{H,\widehat H}$ extends to $Y\to G/\widehat H$.

The fan $\F_G(Y)$ has only one maximal cone, namely $\QQ_{\geq0}\rho_{G,X}(D)$, and the fan $\F_G(G/\widehat{H}$ has only the trivial cone $\{0\}$. Then, thanks to \cite[Theorem 4.1]{Kn96}, the map $\pi^{H,\widehat H}$ extends to $Y\to G/\widehat H$ if and only if $\rho_{G,X}(D)$ is in the kernel of $\ker\pi_*^{H,\widehat H}$, which is $\V^\ell(G/H)$.
\end{proof}

\begin{example}
Let $G=\SL(n+1)$ and $X=\Bl_p(\PP^{n+1})\times (\PP^n)^*$ as in Example~\ref{ex:blowup0}. We have $\XX=\PP^n\times(\PP^n)^*$, and the image $\pi(E)\subset \XX$ is the subset $\{([v],[\eta])\;|\; \eta(v)=0\}$, i.e.\ the unique $G$-stable prime divisor of $\XX$. The two other $G$-stable prime divisors of $X$ are both mapped surjectively onto $\XX$.
\end{example}

\begin{definition}
We denote by
\[
\xymatrix{
X \ar@{->}[r]^{\psi} & X' \ar@{->}[r]^{f} & \XX
}
\]
the Stein factorization of the map $\pi\colon X\to \XX$.
\end{definition}

In \cite[Section 4.4]{Br07} it is shown that $\Autz(X)$ acts on $X'$ in such a way that $\psi$ is equivariant; we denote the corresponding homomorphism as follows:
\[
\psi_*\colon \Autz(X)\to\Autz(X').
\]

Its kernel is the subgroup of automorphisms of $X$ stabilizing each fiber of $\psi$.

\begin{proposition}\label{prop:kerpsi}
The inclusions $\Z(\theta_{G,X}(G))^\circ\subseteq \ker\psi_* \cap \theta_{G,X}(G)\subseteq \Z(\theta_{G,X}(G))$ between subgroups of $\Autz(X)$ hold. Moreover, there is a local isomorphism
\begin{equation}\label{eqn:semidirprod}
\Autz\left(X,\left(\partial_G X\right)^{n\ell}\right) \cong \theta_{G,X}(G') \ltimes (\ker\psi_*)^\circ
\end{equation}
induced by the inclusion of both factors of the right hand side in $\Autz(X)$.
\end{proposition}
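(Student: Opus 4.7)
Since $G'$ is semisimple, the identity component of the center of $\theta_{G,X}(G)$ coincides with the torus $\theta_{G,X}(C)$. Now $\XX$ is a wonderful $(G'\times C)$-variety on which $C$ acts trivially; as $f\colon X'\to\XX$ is finite and $C$ is connected, the $C$-action on $X'$ lifting the trivial $C$-action on $\XX$ must itself be trivial. Hence $\Z(\theta_{G,X}(G))^\circ=\theta_{G,X}(C)\subseteq\ker\psi_*$. Conversely, if $\theta_{G,X}(g)\in\ker\psi_*$ for some $g\in G$, then $g$ acts trivially on $X'$ and, by $G$-equivariance and surjectivity of $f$, also trivially on $\XX$. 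So $\theta_{G,X}(g)$ lies in the kernel of the surjection $\theta_{G,X}(G)=\Autz(X,\partial_G X)\to\Autz(\XX,\partial_G\XX)=\theta_{G,\XX}(G)$ induced by $\pi$, which by the split exact sequence (\ref{eqn:partial}) (with $\widehat H$ in place of $\overline H$) equals the connected center $(\widehat H/H)^\circ$ of $\theta_{G,X}(G)$, and in particular is contained in $\Z(\theta_{G,X}(G))$.

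For the local isomorphism (\ref{eqn:semidirprod}), I first verify both factors sit inside $\Autz(X,(\partial_G X)^{n\ell})$. The inclusion $\theta_{G,X}(G')\subseteq\theta_{G,X}(G)=\Autz(X,\partial_G X)$ is automatic. For $(\ker\psi_*)^\circ$, take $\phi$ preserving every fiber of $\psi$ and any $D\in(\partial_G X)^{n\ell}$: the ray $c_{X,D}$ is not contained in $\V^\ell(X)=\ker(\pi^{H,\widehat H}_*)$, so an analysis of the map of fans $\F(X)\to\F(\XX)$ shows that $\psi(D)$ is a boundary divisor of $X'$ and that $D$ is the unique prime divisor of $X$ mapping onto $\psi(D)$; hence $\phi(D)\subseteq\psi^{-1}(\psi(D))$ forces $\phi(D)=D$. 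Normality of $(\ker\psi_*)^\circ$ in $\Autz(X)$, and thus in $\Autz(X,(\partial_G X)^{n\ell})$, is automatic. The intersection $\theta_{G,X}(G')\cap(\ker\psi_*)^\circ$ is finite: by the first claim it lies in $\Z(\theta_{G,X}(G))\cap\theta_{G,X}(G')$, contained in the finite center of the semisimple group $\theta_{G,X}(G')$.

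It remains to match dimensions at the Lie algebra level. From the description of $\Lie\Autz(X,\D)$ following (\ref{eqn:exact}), one extracts
\[
0\to \Lie\Autz(X,\partial_G X) \to \Lie\Autz(X,(\partial_G X)^{n\ell}) \to \bigoplus_{D\in(\partial_G X)^\ell}\frac{H^0(X,\OO_X(D))}{\CC}\to 0.
\]
The first term equals $\Lie\theta_{G,X}(G')+\Lie\theta_{G,X}(C)$, with $\Lie\theta_{G,X}(C)\subseteq\Lie\ker\psi_*$ by the first claim. The main obstacle, in my view, is to show that each summand $H^0(X,\OO_X(D))/\CC$ with $D\in(\partial_G X)^\ell$ lies in $\Lie\ker\psi_*$: I would prove this by combining the explicit construction of the associated vector fields in \cite[Proposition 4.1.1]{BB96} with the fact that $D\in(\partial_G X)^\ell$ characterizes exactly those $G$-stable prime divisors contracted by $\pi$ (and hence by $\psi$), so that these vector fields are $\psi$-vertical. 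Once this is established, $\Lie\theta_{G,X}(G')+\Lie(\ker\psi_*)^\circ = \Lie\Autz(X,(\partial_G X)^{n\ell})$, and combined with the previous steps the multiplication map $\theta_{G,X}(G')\ltimes(\ker\psi_*)^\circ\to\Autz(X,(\partial_G X)^{n\ell})$ is surjective with finite antidiagonal kernel, hence a local isomorphism.
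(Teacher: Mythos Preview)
Your argument for the two inclusions and for the auxiliary facts (both factors landing in $\Autz(X,(\partial_G X)^{n\ell})$, finite intersection, normality) is essentially the same as the paper's, modulo minor imprecisions: the kernel of $\theta_{G,X}(G)\to\theta_{G,\XX}(G)$ need not \emph{equal} $(\widehat H/H)^\circ$ (only its identity component does), but since this kernel is normal with central Lie algebra it is still central, so your conclusion holds; and $D\in(\partial_G X)^{n\ell}$ need not be the \emph{unique} prime divisor over $\psi(D)$, but it is an irreducible component of $\psi^{-1}(\psi(D))$, which suffices since $(\ker\psi_*)^\circ$ is connected.

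The substantive divergence, and the real gap, is in the generation step. You attempt to prove directly that each $H^0(X,\OO_X(D))/\CC$ with $D\in(\partial_G X)^\ell$ lies in $\ker d\psi_*$; the paper instead deduces this \emph{a posteriori} (Corollary~\ref{cor:dkerpsi}) from the proposition. Your justification rests on the claim that $D\in(\partial_G X)^\ell$ are ``contracted by $\pi$'', but this is backwards: since $c_{X,D}\subset\V^\ell(X)=\ker\pi^{H,\widehat H}_*$, such $D$ map \emph{surjectively} onto $\XX$ (and onto $X'$), while it is the divisors in $(\partial_G X)^{n\ell}$ that map to proper $G$-stable subsets. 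Being $\psi$-dominant does not immediately make the associated vector fields $\psi$-vertical; one would need an explicit computation along the lines of Lemma~\ref{lemma:xi} (carried out much later, and only for $B$-eigenvectors) to see this, and then propagate it to the whole $G$-module.

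The paper avoids this by working on the image side. It first observes that $\psi_*$ sends $\Autz(X,(\partial_G X)^{n\ell})$ into $\Autz(X',\partial_G X')$, then uses \cite[Theorem~4.4.1]{Br07} to get a map $f_*\colon\Autz(X',\partial_G X')\to\Autz(\XX,\partial_G\XX)$ with finite kernel, and finally invokes the standing hypothesis $\theta_{G,\XX}(G)=\Autz(\XX,\partial_G\XX)$. From the resulting commutative square one reads off that $\Autz(X,(\partial_G X)^{n\ell})$ is generated by $\theta_{G,X}(G)$ and $\ker(f_*\circ\psi_*)$, hence by $\theta_{G,X}(G')$ and $(\ker\psi_*)^\circ$. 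This route never touches the vector fields in $H^0(X,\OO_X(D))$ and does not require knowing $\ker d\psi_*$ in advance.
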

\begin{proof}
The first inclusion stems from the fact that $C=\Z(G)^\circ$ acts trivially on $\XX$, hence also on $X'$. On the other hand, if $g\in G$ stabilizes all fibers of $\psi$, then it acts trivially on $X'$ and also on $\XX$. Therefore, to show the second inclusion, we only have to check that no simple factor of $G$ acts trivially on $\XX$ but not on $X$. This is true because $\widehat H/H$ is abelian.

Let us prove the last statement. Both groups on the right hand side of (\ref{eqn:semidirprod}) are subgroups of $\Autz(X,(\partial_G X)^{n\ell})$: this is obvious for $\theta_{G,X}(G')$, so we only have to check it for $(\ker\psi_*)^\circ$. Notice that $\psi$ maps any element $D$ of $(\partial_G X)^{n\ell}$ onto a proper $G$-stable closed subset of $X'$ by Lemma~\ref{lemma:imageell}. It follows that $D$ is an irreducible component of $\psi^{-1}(\psi(D))$, hence it is stable under the action of $(\ker\psi_*)^\circ$. It also follows that $\psi_*$ maps $\Autz(X,(\partial_G X)^{n\ell})$ into $\Autz(X',\partial_G X')$.

The intersection $(\ker\psi_*)^\circ\cap\theta_{G,X}(G')$ is finite thanks to the first part of the proof, and $(\ker\psi_*)^\circ$ is a normal subgroup of $\Autz(X,(\partial_GX)^{n\ell})$. It only remains to prove that $\Autz(X,(\partial_GX)^{n\ell})$ is generated by $\theta_{G,X}(G')$ and $(\ker\psi_*)^\circ$.

By \cite[Theorem 4.4.1]{Br07}, we know that $\Autz(X',\partial X')$ and $\Autz(\XX,\partial \XX)$ are both semisimple and locally isomorphic. It follows that the universal cover of $\Autz(X',\partial X')$ acts on $\XX$ in such a way that $f$ is equivariant. On the other hand no element of this universal cover could act trivially on $X'$ and non-trivially on $\XX$, hence $\Autz(X',\partial X')$ itself acts on $\XX$, preserving all $G$-orbits. This produces a commutative diagram
\[
\xymatrix{
G \ar@{->}[r]^-{\theta_{G,X}} \ar@{->}[d]^-{\theta_{G,\XX}} & \Autz\left(X,\left(\partial_G X\right)^{n\ell}\right) \ar@{->}[d]^-{\psi_*} \\
\Autz(\XX,\partial_G \XX) \ar@{<-}[r]^-{f_*} & \Autz(X',\partial_G X')
}
\]
where $\theta_{G,\XX}$ is surjective by our assumptions. Therefore $\Autz(X,(\partial_GX)^{n\ell})$ is generated by $\theta_{G,X}(G)$ and $\ker(f_*\circ\psi_*)$. Notice that $f_*$ has finite kernel, that the kernel of $\psi_*$ contains $\theta_{G,X}(C)$, and that $\Autz(X,(\partial_GX)^{n\ell})$ is connected: we deduce that $\Autz(X,(\partial_GX)^{n\ell})$ is indeed generated by $\theta_{G,X}(G')$ and $(\ker\psi_*)^\circ$, and the proof is complete.
\end{proof}

If we denote by
\[
d\psi_*\colon\Lie\Autz(X)\to\Lie\Autz(X')
\]
the corresponding homomorphism of Lie algebras, then the following corollary is an immediate consequence of the above proposition.

\begin{corollary}\label{cor:dkerpsi}
The subspace $\ker d\psi_*\subseteq \Lie\Autz(X)$ is $G$-stable, and its intersection with $\Lie \theta_{G,X}(G)$ is equal to $\Lie \theta_{G,X}(C)$. There exists a $G$-equivariant splitting of the exact sequence (\ref{eqn:exact}) such that
\[
\ker d\psi_* = \Lie \theta_{G,X}(C)\oplus\bigoplus_{D\in(\partial_G X)^\ell}\frac{H^0(X,\OO_X(D))}{\CC}.
\]
\end{corollary}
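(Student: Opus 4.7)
The plan is to derive the three assertions of the corollary successively from Proposition~\ref{prop:kerpsi} and the exact sequence (\ref{eqn:exact}), with a single appeal to reductivity of $G$ at the end to produce the $G$-equivariant splitting.

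First, $G$-stability of $\ker d\psi_*$ is immediate: the map $\psi$ is $G$-equivariant, so $\psi_*$ is $G$-equivariant (for the adjoint action of $\theta_{G,X}(G)$ on $\Autz(X)$ and $\Autz(X')$), and hence $d\psi_*$ is a homomorphism of $G$-modules whose kernel is $G$-stable. For the intersection formula I would take Lie algebras in the chain
\[
\Z(\theta_{G,X}(G))^\circ \subseteq \ker\psi_* \cap \theta_{G,X}(G) \subseteq \Z(\theta_{G,X}(G))
\]
given by Proposition~\ref{prop:kerpsi}. Because $G=G'\times C$ with $G'$ semisimple, $\Z(G)^\circ=C$, so $\Z(\theta_{G,X}(G))^\circ = \theta_{G,X}(C)$ and the chain pinches to $\ker d\psi_* \cap \Lie\theta_{G,X}(G) = \Lie\theta_{G,X}(C)$.

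For the decomposition I would differentiate the local isomorphism
\[
\Autz(X,(\partial_G X)^{n\ell}) \cong \theta_{G,X}(G') \ltimes (\ker\psi_*)^\circ
\]
of the proposition, which yields a $G$-module decomposition $\Lie\Autz(X,(\partial_G X)^{n\ell}) = \Lie\theta_{G,X}(G') \oplus \ker d\psi_*$. Under the standing assumption $\theta_{G,X}(G)=\Autz(X,\partial_G X)$ of the section, combining this with (\ref{eqn:exact}) and the characterisation of $\Lie\Autz(X,(\partial_G X)^{n\ell})$ as the preimage of
\[
W^\ell:=\bigoplus_{D\in(\partial_G X)^\ell}\frac{H^0(X,\OO_X(D))}{\CC}
\]
produces a short exact sequence of $G$-modules
\[
0 \to \Lie\theta_{G,X}(C) \to \ker d\psi_* \to W^\ell \to 0.
\]
The kernel identification reuses the intersection computed above; surjectivity on the right holds because $\Lie\theta_{G,X}(G')\subseteq \Lie\Autz(X,\partial_G X)$ projects to zero in $W^\ell$.

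The remaining work is to split this sequence and promote the splitting to one of (\ref{eqn:exact}) itself. The mildly delicate point — the main obstacle, such as it is — is that an arbitrary $G$-equivariant splitting of (\ref{eqn:exact}) need not send $W^\ell$ into $\ker d\psi_*$; so I would construct the splitting in two stages. By reductivity of $G$ I first choose a $G$-submodule $V\subset \ker d\psi_*$ complementary to $\Lie\theta_{G,X}(C)$ and mapping isomorphically onto $W^\ell$. Setting $W^{n\ell}:=\bigoplus_{D\in(\partial_G X)^{n\ell}}H^0(X,\OO_X(D))/\CC$, I then pick (again by reductivity) a $G$-equivariant section of the projection $\Lie\Autz(X)\twoheadrightarrow W^{n\ell}$, and paste it with the inverse $W^\ell\xrightarrow{\sim}V\hookrightarrow\Lie\Autz(X)$ to obtain a $G$-equivariant splitting of (\ref{eqn:exact}) whose restriction to $W^\ell$ lands in $V$. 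The displayed formula of the corollary is then exactly the decomposition $\ker d\psi_* = \Lie\theta_{G,X}(C)\oplus V$ read off through this splitting.
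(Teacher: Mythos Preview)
Your proof is correct and is precisely the natural elaboration of what the paper does: the paper records the corollary as ``an immediate consequence of the above proposition'' and gives no further argument, and your three steps (G-stability of the kernel, the intersection via the inclusions of Proposition~\ref{prop:kerpsi}, and the two-stage construction of a compatible splitting using reductivity) are exactly how one unpacks that. One cosmetic remark: the map you ``paste'' in the last paragraph is not literally a right inverse of the projection in (\ref{eqn:exact}) (the image of your section of $\Lie\Autz(X)\twoheadrightarrow W^{n\ell}$ may have nonzero $W^\ell$-component), but its image $V\oplus \mathrm{im}(s^{n\ell})$ is a $G$-stable complement of $\Lie\theta_{G,X}(G)$, and the genuine section obtained by inverting $\pi$ on that complement does send $W^\ell$ into $V$ by the block-triangular shape of the map---so the conclusion stands unchanged.
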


\section{Restricting automorphisms of $X$ to fibers of $\psi$}\label{s:restricting}

We study now the automorphisms of a generic fiber of $\psi$ induced by automorphisms of $X$ belonging to $\ker\psi_*$. For this it is convenient to exploit the {\em local structure} of spherical varieties.

\begin{theorem}\cite[Theorem 2.3 and Proposition 2.4]{Kn94}
Let $Y$ be a spherical $G$-variety. Let $P_{G,Y}\supseteq B$ be the stabilizer in $G$ of the open $B$-orbit of $Y$, let $L_{G,Y}$ be the Levi subgroup of $P_{G,Y}$ containing $T$, and consider the following open subset of $Y$:
\[
Y_0 = Y\smallsetminus \bigcup_{D\in\Delta_G(Y)} D.
\]
Then there exists a closed $L_{G,Y}$-stable and $L_{G,Y}$-spherical subvariety $\Zi_{G,Y}$ of $Y_0$ such that the map
\[
\begin{array}{ccc}
P_{G,Y}^u \times \Zi_{G,Y} & \to &  Y_0 \\
(p,z) & \mapsto & pz
\end{array}
\]
is a $P_{G,Y}$-equivariant isomorphism, where $L_{G,Y}$ acts on $P_{G,Y}^u \times \Zi_{G,Y}$ by $l\cdot (p,z) = (lpl^{-1},lz)$. The commutator subgroup $(L_{G,Y},L_{G,Y})$ acts trivially on $\Zi_{G,Y}$, and if $Y$ is toroidal then every $G$-orbit meets $\Zi_{G,Y}$ in an $L_{G,Y}$-orbit.
\end{theorem}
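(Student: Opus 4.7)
The plan is to reduce the assertion to invariant theory of the unipotent radical $P_{G,Y}^u$ on an affine open subset, following the line of argument originally due to Brion--Luna--Vust. As a first step I would show that $Y_0$ is affine. After passing, if necessary, to a $G$-equivariant completion of $Y$, fix a $G$-linearized ample line bundle $\eL$. For $n$ sufficiently large, the $B$-semiinvariants in $H^0(Y,\eL^{\otimes n})$ provide a section $s$ whose divisor of zeroes on $Y$ is supported precisely on $\bigcup_{D \in \Delta_G(Y)} D$; the non-vanishing locus of $s$ intersected with $Y$ is then equal to $Y_0$, which is therefore affine and $P_{G,Y}$-stable.

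Next I would construct $\Zi_{G,Y}$ via a $P_{G,Y}^u$-quotient. By sphericity, $P_{G,Y}^u$ acts freely on the open $B$-orbit $Bx_0 \subseteq Y_0$, with transversal slice of dimension equal to the rank of $Y$. Since $P_{G,Y}^u$ has no nontrivial characters, Grosshans-type finite generation yields an affine $L_{G,Y}$-equivariant categorical quotient $q\colon Y_0 \to Y_0/\!/P_{G,Y}^u$, and I would take $\Zi_{G,Y}$ to be the reduced scheme-theoretic fibre $q^{-1}(q(x_0))$. By construction $\Zi_{G,Y}$ is closed and $L_{G,Y}$-stable in $Y_0$, and the multiplication map $m\colon P_{G,Y}^u \times \Zi_{G,Y} \to Y_0$ is $P_{G,Y}^u$-equivariant, birational, and between irreducible normal varieties of the same dimension. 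Zariski's main theorem upgrades $m$ to an isomorphism, and $P_{G,Y}$-equivariance follows because $L_{G,Y}$ normalizes $P_{G,Y}^u$ and stabilizes $\Zi_{G,Y}$.

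For the last two assertions, I note that the open $T$-orbit in $\Zi_{G,Y}$ is dense and coincides with the open $L_{G,Y}$-orbit, both of dimension equal to the rank of $Y$. Hence the $L_{G,Y}$-action on this orbit factors through an abelian quotient, so the derived group $(L_{G,Y},L_{G,Y})$ fixes it pointwise; by density, $(L_{G,Y},L_{G,Y})$ acts trivially on all of $\Zi_{G,Y}$. For the toroidal statement, any $G$-orbit $O$ meets $Y_0$, because no color contains a $G$-orbit in a toroidal embedding, and the product decomposition identifies $O \cap Y_0 \cong P_{G,Y}^u \times (O \cap \Zi_{G,Y})$; since $O$ is a single $G$-orbit, $O \cap \Zi_{G,Y}$ must be a single $L_{G,Y}$-orbit.

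I expect the main obstacle to be the rigorous construction of $\Zi_{G,Y}$ as a closed subvariety for which $m$ is an isomorphism on the nose, rather than merely birational. Producing a geometric section of the categorical quotient $q$ requires exploiting that $P_{G,Y}^u$ acts freely on a sufficiently large open set and that the relevant invariants separate orbits there, both of which are specific to the spherical setting. The subsidiary checks — that the chosen $B$-semiinvariant section has zero locus exactly $\bigcup_{D \in \Delta_G(Y)} D$ and not a proper subset, and that normality and $L_{G,Y}$-equivariance transfer correctly along $m$ — also require some care but are essentially routine once the first hurdle is cleared.
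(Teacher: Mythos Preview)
The paper does not prove this theorem: it is quoted verbatim from \cite[Theorem~2.3 and Proposition~2.4]{Kn94} and used as a black box, so there is no ``paper's own proof'' to compare against. Your proposal is therefore not a reconstruction of anything in this paper but an independent sketch of the local structure theorem itself.

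That said, your outline follows the standard Brion--Luna--Vust route and is broadly correct. A few remarks on the places where it is loose. First, the construction of $\Zi_{G,Y}$ as a fibre of a categorical quotient $Y_0\to Y_0/\!/P_{G,Y}^u$ is not quite how the argument usually runs: one rather chooses a basis $f_1,\ldots,f_r$ of $B$-eigenfunctions whose $T$-weights restrict to a basis of the character lattice of $T/(T\cap\ker\theta)$, and uses the resulting map $Y_0\to\mathbb A^r$ to trivialise the $P_{G,Y}^u$-action directly; this avoids appealing to Grosshans finite generation and gives the section of $q$ you say you are worried about. Second, your Zariski main theorem step needs the target $Y_0$ to be normal, which is part of the hypotheses on a spherical variety but should be stated. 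Third, the argument that $(L_{G,Y},L_{G,Y})$ acts trivially is correct in spirit but the sentence ``the $L_{G,Y}$-action on this orbit factors through an abelian quotient'' is exactly what needs to be shown; the actual reason is that the stabiliser in $L_{G,Y}$ of a point in the open $T$-orbit contains $(L_{G,Y},L_{G,Y})$ because $BH$ is open in $G$ forces $L_{G,Y}\cap H$ to contain the derived subgroup. These are refinements rather than gaps; the overall strategy is the right one.
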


\begin{definition}
We define $T_{G,Y}$ to be the quotient of $L_{G,Y}^r$ by the kernel of its action on $\Zi_{G,Y}$.
\end{definition}

We get back to our complete $G$-regular embedding $X$. The torus $T_{G,X}$ is a subquotient of $T$, and $\Zi_{G,X}$ is a spherical (toric) $T_{G,X}$-variety, with lattice $\Lambda_{T_{G,X}}(\Zi_{G,X})=\Chi(T_{G,X})=\Lambda_G(G/H)$ and fan of convex cones equal to $\F_G(X)$.

\begin{definition}
For any $x'$ in the open $G$-orbit of $X'$ we denote by $\kappa_{x'}$ the restriction map
\[
\kappa_{x'}\colon (\ker\psi_*)^\circ \to \Autz(X_{x'})
\]
where $X_{x'}=\psi^{-1}(x')$.
\end{definition}

Recall that $H$ is chosen in such a way that $BH$ is open in $G$, and $x_0=eH\in G/H\subseteq X$. 
Let us consider $x_0' = \psi(x_0)$: the fiber $X_{x_0'}$ is smooth and complete, and it is a toric variety under the action of the torus $S=(\widehat H/H)^\circ=H'/H$, where $H'$ is the stabilizer of $x_0'$.

Moreover, $S$ acts naturally on $G/H$ by $G$-equivariant automorphisms, and since $S$ is connected this $S$-action extends to $X$, stabilizing all colors of $X$ and all fibers of $\psi$. We may fix $\Zi_{G,X'}\subset X'$ containing $x_0'$, and choose $\Zi_{G,X}$ so that
\[
\Zi_{G,X}=\psi^{-1}(\Zi_{G,X'})\cap X_0,
\]
which implies that $\Zi_{G,X}$ contains $x_0$ and is stable under the action of $S$.

\begin{lemma}\label{lemma:Saction}
The above action of $S$ on $\Zi_{G,X}$ can be realized sending $S$ injectively into $T_{G,X}$, and then letting it act on $\Zi_{G,X}$ via the restriction of the usual action of $G$ on $X$.
\end{lemma}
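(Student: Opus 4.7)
The plan is to identify the open $T_{G,X}$-orbit $\mathcal O$ of $\Zi_{G,X}$ with $T_{G,X}$ itself, and then exhibit the homomorphism $\phi\colon S\to T_{G,X}$ via the action on the base point $x_0$. Since $\Lambda_{T_{G,X}}(\Zi_{G,X})=\Chi(T_{G,X})$, the stabilizer in $T_{G,X}$ of any point of $\mathcal O$ is trivial, so $\mathcal O$ is a principal homogeneous space under $T_{G,X}$. That $x_0\in\mathcal O$ follows from the local structure theorem: $Bx_0$ is dense in $X$, hence the $P_{G,X}$-orbit of $x_0$ is open in $X_0$, and under the isomorphism $P_{G,X}^u\times\Zi_{G,X}\cong X_0$ this forces $L_{G,X}\cdot x_0$ to be open in $\Zi_{G,X}$; as $(L_{G,X},L_{G,X})$ acts trivially on $\Zi_{G,X}$, this open $L_{G,X}$-orbit coincides with $\mathcal O$. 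I would fix the identification $\mathcal O\cong T_{G,X}$ by sending $x_0$ to the identity element.

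Next, I would observe that the $S$-action on $X$ commutes with that of $G$, since it arises from right multiplication by $\widehat H$ on $G/H$, which is $G$-equivariant, and extends uniquely to $X$. In particular, the $S$-action on $\Zi_{G,X}$ commutes with the induced action of $T_{G,X}$, a subquotient of $L_{G,X}^r\subseteq G$, so $S$ preserves the open $L_{G,X}$-orbit $\mathcal O$. I would then define $\phi\colon S\to T_{G,X}$ as the unique element such that $s\cdot x_0=\phi(s)\cdot x_0$; this is a morphism because $s\mapsto s\cdot x_0$ is a morphism $S\to\mathcal O\cong T_{G,X}$. The commutativity of $S$ and $T_{G,X}$ on $\mathcal O$, together with the torsor structure, yield that $\phi$ is an algebraic group homomorphism and that $s$ acts on $\mathcal O$ as translation by $\phi(s)$; by density this identification extends to all of $\Zi_{G,X}$.

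For injectivity of $\phi$, note that if $\phi(s)=e$ then $s$ acts trivially on $\Zi_{G,X}$, and since $S$ commutes with $P_{G,X}^u\subseteq G$ and $X_0=P_{G,X}^u\cdot\Zi_{G,X}$, also trivially on $X_0$, hence on $X$, hence on $G/H$. The right-translation action of $\widehat H/H$ on $G/H$ is obviously faithful (its kernel is $H/H=\{e_S\}$), so $s=e_S$. The main difficulty I anticipate is not conceptual but rather careful bookkeeping of the three nested actions ($G$, $T_{G,X}$ as subquotient of $L_{G,X}^r\subseteq G$, and $S$), verifying that the commutation of $S$ with $G$ on the open orbit $G/H$ descends correctly to commutation with $T_{G,X}$ on $\Zi_{G,X}$, and checking that $x_0$ indeed lies in the principal $T_{G,X}$-orbit $\mathcal O$.
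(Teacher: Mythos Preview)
Your proof is correct and takes a different route from the paper's. The paper constructs the homomorphism character-theoretically: for each $\chi\in\Lambda_G(G/H)=\Chi(T_{G,X})$ and $f\in\CC(G/H)^{(B)}_\chi$, the function $gH\mapsto f(gnH)$ is again a $B$-eigenvector of weight $\chi$, hence equals $\iota_\chi(n^{-1}H)f$ for some scalar; assembling these yields $\iota\colon S\to\Hom(\Lambda_G(G/H),\GG_m)\cong T_{G,X}$, and the agreement of the two actions is then checked by evaluating such $f$ on both sides. Injectivity, together with the identification of the image as the subtorus corresponding to $\V_G^\ell(G/H)$, is obtained by citing \cite{Br97}. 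Your approach instead exploits the torsor structure of the open $T_{G,X}$-orbit and the commutation of the $S$- and $G$-actions to read off the homomorphism directly from the base point $x_0$, and establishes injectivity by a clean self-contained argument (trivial on $\Zi_{G,X}$ $\Rightarrow$ trivial on $X_0$ $\Rightarrow$ trivial on $G/H$). Your route is more elementary and avoids the external reference; the paper's route yields as a byproduct the precise image of $S$ inside $T_{G,X}$, a fact not asserted in the lemma but used immediately afterward in the description~(\ref{eqn:fiberfan}) of $\F_S(X_{x_0'})$ and again in \S\ref{s:linear}.
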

\begin{proof}
If $nH\in S$ and $f\in\CC(G/H)^{(B)}_\chi$, then $gH\mapsto f(gnH)$ also belongs to $\CC(G/H)^{(B)}_\chi$, therefore there is a homomorphism (depending only on $\chi$) $\iota_\chi\colon S\to \GG_m$ such that $f(gnH) = \iota_\chi(n^{-1}H)f(gH)$ for all $g\in G$. This induces a homomorphism
\[
\iota\colon S\to\Hom(\Lambda_G(G/H),\GG_m) \cong T_{G,X},
\]
which can be shown to be injective, with image equal to the subtorus of $T_{G,X}$ corresponding to the subspace $\V_G^\ell(G/H)\subseteq \N_G(G/H)$ (see \cite[Proof of Theorem 4.3]{Br97}). Let us check that the restriction of the usual $T_{G,X}$-action on $\Zi_{G,X}$ to the subtorus $\iota(S)$ yields the action described above. The intersection $\Zi_{G,X}\cap G/H$ is dense in $\Zi_{G,X}$, and $\Zi_{G,X}$ is a toric $T_{G,X}$-variety with lattice equal to $\Lambda_G(G/H)$: it follows that $\iota(nH)gH=gnH$, because 
\[
f(\iota(nH)gH) = \chi(\iota(n^{-1}H)) f(gH) = \iota_\chi(n^{-1}H) f(gH) = f(gnH)
\]
for all $nH\in S$, $gH\in\Zi_{G,X}\cap G/H$, $\chi\in\Lambda_G(G/H)$ and $f\in\CC(G/H)^{(B)}_\chi$.
\end{proof}

The fiber $X_{x_0'}$ is also the fiber over $x_0'$ of the $S$-equivariant map $\Zi_{G,X}\to\Zi_{G,X'}$, which implies that its fan of convex cones is
\begin{equation}\label{eqn:fiberfan}
\F_{S}(X_{x_0'}) = \left\{c \;\middle\vert\; c\in \F_G(X),\; c\subset \V_G^\ell(G/H) \right\}.
\end{equation}

Since the $S$-boundary of $X_{x_0'}$ is given intersecting $X_{x_0'}$ with the elements of $(\partial_G X)^{\ell}$, there is an exact sequence of $S$-modules
\begin{equation} \label{eqn:seqfiber}
0 \to \Lie S \to \Lie\Autz(X_{x_0'})\to\bigoplus_{D\in(\partial_G X)^{\ell}}\frac{H^0(X_{x_0'},\OO_X(D\cap X_{x_0'}))}{\CC}\to 0.
\end{equation}

\begin{lemma}\label{lemma:automfiber}
Consider the homomorphism
\[
d\kappa_{x'}\colon\ker d\psi_*=\Lie (\ker\psi_*)^\circ\to \Lie\Autz(X_{x'})
\]
where $x'$ is in the open $B$-orbit of $X'$. If $V\subseteq\ker d\psi_*$ is a simple $G$-submodule, then $d\kappa_{x'}(V)=d\kappa_{x'}(\CC v)$, where $v\in V$ is a highest weight vector.
\end{lemma}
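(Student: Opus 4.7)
The plan is to exploit the commuting action of $S=(\widehat H/H)^\circ$ on $X$, combined with the toric structure of the fibers of $\psi$ over the open $G$-orbit of $X'$. By Corollary~\ref{cor:dkerpsi}, the simple $G$-module $V$ is contained either in $\Lie\theta_{G,X}(C)$ or in a single summand $H^0(X,\OO_X(D))/\CC$ for some $D\in(\partial_G X)^\ell$. The first alternative is immediate: $C\subseteq\Z(G)$ acts trivially by conjugation, so $V$ is one-dimensional and the claim holds trivially. I therefore focus on the second case and let $v$ span the highest weight line of $V$, of weight $\lambda\in\Chi(B)\setminus\{0\}$.

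By Lemma~\ref{lemma:Saction}, $S$ acts on $X$ by $G$-equivariant automorphisms and hence commutes with the $G$-action. Schur's lemma then forces $S$ to act on the simple $G$-module $V$ through a single character, which evaluated on $v$ agrees with the restriction $\iota_\lambda\in\Chi(S)$ of $\lambda$. Moreover, since $S$ commutes with $G$ it preserves every fiber of $\psi$, so $d\kappa_{x'}$ is $S$-equivariant for every $x'$ in the open $G$-orbit of $X'$ (in particular for $x'$ in the open $B$-orbit).

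The heart of the argument is to show $\iota_\lambda\neq 0$, equivalently $\lambda\notin\Xi=\Lambda(G/\widehat H)$ (see Proposition~\ref{prop:wonderfulclosure}). Suppose for contradiction $\lambda\in\Xi$; then $v$ is the pullback along $\pi^{H,\widehat H}$ of a $B$-semi-invariant on $G/\widehat H$. By Definition~\ref{def:ell} we have $\rho_{G,X}(D)\in\V_G^\ell(X)$, so $\langle\rho_{G,X}(D),\lambda\rangle=0$ and hence $\nu_D(v)=0$. Combined with the condition $\div(v)+D\geq 0$, this forces $\div(v)\geq 0$, so $v$ is a regular function on the complete variety $X$, contradicting $H^0(X,\OO_X)=\CC$ and $\lambda\neq 0$.

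With $\iota_\lambda\neq 0$ established, the fiber $X_{x'}$ is a smooth complete toric variety under $S$, and by Demazure's description of its Lie algebra of automorphisms each non-zero $S$-weight space in $\Lie\Autz(X_{x'})$ is at most one-dimensional. The $S$-equivariance of $d\kappa_{x'}$ therefore places $d\kappa_{x'}(V)$ inside a space of dimension at most one. Restriction of rational functions along $X_{x'}$ yields a commutative diagram comparing (\ref{eqn:exact}) on $X$ with (\ref{eqn:seqfiber}) on $X_{x'}$; through it $d\kappa_{x'}(v)$ projects to the class of $v|_{X_{x'}}$, which is non-zero because $v|_{X_{x'}}$ is an $S$-eigenfunction of non-trivial weight $\iota_\lambda$ and therefore non-constant. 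Hence $d\kappa_{x'}(\CC v)=d\kappa_{x'}(V)$, both of dimension one. The main obstacle in this plan is establishing $\iota_\lambda\neq 0$; the remaining ingredients are standard applications of Schur's lemma and of the elementary theory of automorphisms of smooth toric varieties.
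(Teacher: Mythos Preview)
Your proof is correct and takes a genuinely different route from the paper's.

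The paper argues directly and geometrically: lifting $v$ to a $B$-eigenvector $s\in H^0(X,\OO_X(D))$, it observes that the zeros of $s$ meeting the fiber $X_{x_0'}$ are all $G$-stable (because $Bx_0'$ is open in $X'$ and colors of $X$ do not meet this fiber). Hence for every $g$ in the dense subset $BH\subset G$ the restrictions $(gs)|_{X_{x_0'}}$ and $s|_{X_{x_0'}}$ have the same divisor on the complete variety $X_{x_0'}$ and are therefore proportional. Since $V$ is spanned by the classes $[gs]$, the image $d\kappa_{x_0'}(V)$ is one-dimensional. No appeal to the $S$-action, Schur's lemma, or Demazure's structure theorem is needed.

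Your approach instead exploits the commuting $S$-action and the multiplicity-one property of nonzero $S$-weight spaces in $\Lie\Autz(X_{x'})$; the core work goes into proving $\iota_\lambda\neq0$. This is a perfectly valid alternative, and it has the advantage of yielding immediately that $d\kappa_{x'}(v)\neq0$ and identifying the $S$-weight of the image. The paper's argument is shorter and more self-contained.

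One small imprecision: the equivalence you state, ``$\iota_\lambda\neq0$, equivalently $\lambda\notin\Xi$,'' is not quite right. What $\iota_\lambda=0$ means is $\lambda\in\V_G^\ell(X)^\perp$, and $\Xi=\Lambda_G(G/\widehat H)$ sits inside $\V_G^\ell(X)^\perp$ only with finite index. The pullback claim that follows is likewise not needed and not quite accurate. Fortunately your actual contradiction only uses $\langle\rho_{G,X}(D),\lambda\rangle=0$, which holds for any $\lambda\in\V_G^\ell(X)^\perp$ since $\rho_{G,X}(D)\in\V_G^\ell(X)$; so the argument goes through once you replace ``$\lambda\in\Xi$'' by ``$\lambda\in\V_G^\ell(X)^\perp$'' and drop the sentence about pullbacks.
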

\begin{proof}
We may assume that $x'=x_0'$ and that $v = [s] \in H^0(X,\OO_X(D))/ \CC$ for some $D\in(\partial X)^\ell$, in view of Corollary~\ref{cor:dkerpsi}. From the expression in local coordinates (\ref{eqn:vectorfields}), it follows that $d\kappa_{x_0'}(v)$ is sent by the surjective map of (\ref{eqn:seqfiber}) to $[s|_{X_{x_0'}}]$, where $s|_{X_{x_0'}}\in H^0(X_{x_0'}, \OO_{X_{x_0'}}(D\cap X_{x'_0}))$ (see also the proof of \cite[Proposition 4.1.1]{BB96}).

If $s$ is a $B$-eigenvector then its zeros are $B$-stable. On the other hand, since $Bx_0'$ is open in $X'$, the only zeros of $s$ intersecting $X_{x_0'}$ are $G$-stable. It also follows that $(gs)|_{X_{x_0'}}$ and $s|_{X_{x_0'}}$ have the same zeros (hence are linearly dependent) for any $g\in G$ such that $gx_0$ doesn't lie on any color of $G/H$. This is true for $g$ lying in the dense subset $BH$ of $G$, and since $V$ is generated as a vector space by elements of the form $[gs]$ for $g\in BH$, the lemma follows.
\end{proof}

The following lemma on the structure of $\F_G(X)$ will be crucial in the study of the image of $\kappa_{x'}$.

\begin{lemma}\label{lemma:opposite}
Let $G$ be Let $i=1,2$ and $0\neq\gamma_i\in\Lambda_G(X)$ be such that $X(\gamma_i)$ exists, with $X(\gamma_i)\in(\partial X)^\ell$. Suppose that $\langle m,\gamma_1\rangle = -\langle m, \gamma_2\rangle$ for all $m\in\V^\ell_G(X)$. Then
\[
\langle \rho_{G,X}(D),\gamma_i \rangle = 0
\]
for all $i=1,2$, for all $D\in(\partial_G X)^{n\ell}$ and for all $D\in\Delta_G(X)$.
\end{lemma}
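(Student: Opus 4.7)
My plan: show that $f_1 f_2$ is a nonzero constant on $X$, which forces $\gamma_1+\gamma_2=0$; the conclusion of the lemma then follows by a one-line inequality chase.

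Concretely, choose nonzero $B$-semiinvariant sections $s_i\in H^0(X,\OO_X(X(\gamma_i)))^{(B)}_{\gamma_i}$ for $i=1,2$ and identify them with rational functions $f_i\in \CC(X)^{(B)}_{\gamma_i}$ via the $G$-linearization convention fixed in the Notations; then $\div(f_i)+X(\gamma_i)\geq 0$, and since $\gamma_i\neq 0$ one has $\nu_{X(\gamma_i)}(f_i)=-1$ and $\nu_D(f_i)\geq 0$ for every $B$-stable prime divisor $D\neq X(\gamma_i)$. The product $f:=f_1 f_2\in\CC(X)^{(B)}_{\gamma_1+\gamma_2}$ satisfies $\nu_D(f)=\langle\rho(D),\gamma_1+\gamma_2\rangle$ for every $B$-stable prime divisor $D$.

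I verify $\div(f)\geq 0$ by splitting into cases. If $D\in(\partial X)^\ell$, then $\rho(D)\in\V^\ell(X)$ by Definition~\ref{def:ell}, and the hypothesis yields $\nu_D(f)=0$; in particular the order-one poles of $f_1$ and $f_2$ along $X(\gamma_1), X(\gamma_2)\in(\partial X)^\ell$ cancel exactly in the product. If $D\in(\partial X)^{n\ell}\cup\Delta(X)$, then $D$ is neither $X(\gamma_1)$ nor $X(\gamma_2)$ (since both lie in $(\partial X)^\ell$), so $\nu_D(f_1),\nu_D(f_2)\geq 0$ and therefore $\nu_D(f)\geq 0$. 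Thus $f$ is regular on the complete irreducible variety $X$, hence $f\in H^0(X,\OO_X)=\CC$; as $f_1,f_2$ are nonzero rational functions, $f$ is a nonzero constant, and a nonzero $B$-semiinvariant constant has trivial weight, giving $\gamma_1+\gamma_2=0$.

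Finally, for any $D\in(\partial X)^{n\ell}\cup\Delta(X)$, the identity $\langle\rho(D),\gamma_1\rangle+\langle\rho(D),\gamma_2\rangle=0$ together with $\langle\rho(D),\gamma_i\rangle=\nu_D(f_i)\geq 0$ forces each summand to vanish, proving the lemma. I do not anticipate any real obstacle; the entire argument rests on the single observation that the potential poles of $f_1 f_2$ all lie in $(\partial X)^\ell$ and are cancelled by the hypothesis through the containment $\rho((\partial X)^\ell)\subseteq\V^\ell(X)$.
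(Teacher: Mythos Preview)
Your proof is correct and takes a genuinely different route from the paper's. The paper argues via the wonderful variety $\XX$: using \cite[Lemma~2.1.2]{Br07} it constructs a linear combination
\[
v=\sum_{Y\in(\partial_G X)^{n\ell}} n_Y\,\rho_{G,X}(Y)+\sum_{Z\in\Delta_G(X)} n_Z\,\rho_{G,X}(Z)\in\V^\ell_G(X)
\]
with all coefficients strictly positive, and then combines $\langle v,\gamma_1\rangle=-\langle v,\gamma_2\rangle$ with the nonnegativity of each summand to force all summands to vanish. Your argument bypasses $\XX$ entirely: the observation that the only possible poles of $f_1f_2$ lie in $(\partial X)^\ell$, where the hypothesis kills them, immediately makes $f_1f_2$ a global regular function on the complete variety $X$, hence a nonzero constant, whence $\gamma_1+\gamma_2=0$; the same nonnegativity then finishes. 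Your approach is more elementary and self-contained (no appeal to $\XX$ or to \cite{Br07}), and it yields the slightly sharper intermediate conclusion $\gamma_1+\gamma_2=0$ in $\Lambda_G(X)$, not merely the vanishing of $\gamma_1+\gamma_2$ on $\V^\ell_G(X)$. The paper's approach, on the other hand, makes visible the structural reason behind the lemma (the positioning of $\rho(\Delta)$ and $\rho((\partial X)^{n\ell})$ relative to $\V^\ell$), which is thematically consistent with the rest of \S\ref{s:relating}--\S\ref{s:nonlinear}. One minor remark: the claim $\nu_{X(\gamma_i)}(f_i)=-1$ is not needed for your argument (and you do not use it), so you could drop it or justify it in one line from $\div(f_i)+X(\gamma_i)\geq 0$ together with $\gamma_i\neq 0$.
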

\begin{proof}
Consider the wonderful variety $\XX$. Both sets $\rho_{G,\XX}(\Delta_G(\XX))$ and $\rho_{G,\XX}(\partial_G\XX)$ generate $\N_G(\XX)$ as a vector space, and the convex cone generated by $\rho_{G,\XX}(\Delta_G(\XX))$ contains $-\rho_{G,\XX}(\partial_G\XX)$ (see \cite[Lemma 2.1.2]{Br07}). On the other hand, the set $\pi_*(\rho_{G,X}((\partial_G X)^{n\ell}))\subset\N_G(\XX)$ generates the same convex cone $\V_G(\XX)$ generated by $\rho_{G,\XX}(\partial_G\XX)$, and $\pi_*(\rho_{G,X}(\Delta_G(X)))=\rho_{G,\XX}(\Delta_G(\XX))$. It follows that there exists a linear combination
\[
v = \sum_{Y\in(\partial_G X)^{n\ell}} n_Y \rho_{G,X}(Y) + \sum_{Z\in\Delta_G(X)} n_Z \rho_{G,X}(Z) \in\V^\ell_G(X)
\]
where all the coefficients $n_Y$ and $n_Z$ are positive. From the assumptions on the characters $\gamma_i$, all the elements $\rho_{G,X}(Y)$ and $\rho_{G,X}(Z)$ above are non-negative on both $\gamma_1$ and $\gamma_2$: we deduce that $\langle v, \gamma_i\rangle \geq 0$, which yields  $\langle v, \gamma_i\rangle = 0$. The lemma follows.
\end{proof}





\section{Prime divisors not on the linear part of the valuation cone}\label{s:nonlinear}
In this section we study $\Autz(X,\D)$ under the assumption that $\D\supseteq (\partial_G X)^\ell$. We also suppose that $\D$ contains all the $G$-stable prime divisors $D$ that satisfy $H^0(X,\OO_X(D))=\CC$, since these prime divisors do not move under the action of $\Autz(X)$ anyway, and define $\E=\partial X\smallsetminus \D$.

Before stating the main result of this section, Theorem~\ref{thm:nonlinear}, we need to establish a correspondence between the divisors in $\E$ and certain boundary divisors of $\XX$.

Recall that, since $\XX$ is wonderful, the set $-\rho_{G,\XX}(\partial_X\XX)$ is a basis of $\N_G(\XX)$, dual to $\Sigma_G(\XX)$.

\begin{definition}
For an element $D\in\partial_G \XX$, we denote by $\sigma_D$ the spherical root of $\XX$ dual to $-\rho_{G,\XX}(D)$.
\end{definition}

Since $\Lambda_G(\XX)$ is a sublattice of $\Lambda_G(X)$, we consider $\sigma_D$ also as an element of the latter. Also recall that, thanks to \cite[Theorem 2.2.3]{Br07}, if $D\in\partial_G\XX$ satisfies $H^0(\XX,\OO_\XX(D))\neq\CC$ then $H^0(\XX,\OO_\XX(D))/\CC$ is irreducible with highest weight $\sigma_{D}$.

\begin{lemma}\label{lemma:movedareorthogonal}
Let $E\in\E$. Then:
\begin{enumerate}
\item\label{lemma:movedareorthogonal:piD} the image $\pi(E)$ is an element of $\partial_G\XX$, with $H^0(\XX,\OO_\XX(\pi(E)))\neq \CC$, and $E$ is the only element of $\partial_G X$ whose image is $\pi(E)$;
\item\label{lemma:movedareorthogonal:degree+orth} we have
\[
\pi_*(\rho_{G,X}(E)) = \rho_{G,\XX}(\pi(E)),
\]
and
\begin{equation}\label{eqn:orth}
\forall c \in \F_G(X)\smallsetminus \{c_{X,E}\}, c \mbox{ $1$-dimensional} \colon \quad c \subset \sigma_{\pi(E)}^\perp;
\end{equation}
\item\label{lemma:movedareorthogonal:simple} the $G$-modules $H^0(X,\OO_X(E))$ and $H^0(\XX,\OO_\XX(\pi(E)))$ are isomorphic.
\end{enumerate}
\end{lemma}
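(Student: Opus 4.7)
I would begin by exploiting that $H^0(X,\OO_X(E))\neq\CC$ (which holds because $E\in\E$ and $\D$ was assumed to contain all $G$-stable prime divisors with trivial sections): pick a nonzero $B$-eigenvector $s$ of some weight $\gamma\neq 0$ in $H^0(X,\OO_X(E))/\CC$, and write $s=f\cdot s_E$ for the canonical section $s_E$ of $\OO_X(E)$, so that $f\in\CC(X)^{(B)}_\gamma$ satisfies $\div(f)+E\geq 0$. Since $\div(f)$ is $B$-stable and hence supported on $B$-stable prime divisors, this translates into $\langle\rho_{G,X}(E),\gamma\rangle=-1$ and $\langle\rho_{G,X}(D'),\gamma\rangle\geq 0$ for every $B$-stable prime divisor $D'\neq E$.

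The crucial step, which I expect to be the main obstacle, is to show that $\gamma\in\Lambda_G(\XX)=\Span_\ZZ\Sigma_G(X)$. First I would prove $\gamma\perp\V^\ell_G(X)$: each cone $c\in\F_G(X)$ meets $\V^\ell_G(X)$ in a face of $c$ (since $\V^\ell_G(X)$ is the intersection of supporting hyperplanes $\{\sigma=0\}$ of $\V_G(X)$), so $\V^\ell_G(X)$ is covered by cones of $\F_G(X)$ contained in it, and every $v\in\V^\ell_G(X)$ admits a non-negative expansion $v=\sum\lambda_{E'}\rho_{G,X}(E')$ with $E'\in(\partial_G X)^\ell$. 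Pairing with $\gamma$ (non-negative on each such $\rho_{G,X}(E')$ as $E'\neq E$) gives $\langle v,\gamma\rangle\geq 0$, and the same applied to $-v$ forces $\langle v,\gamma\rangle=0$. To upgrade from $\gamma\in(\V^\ell_G(X))^\perp\otimes\QQ$ to the integral statement $\gamma\in\Span_\ZZ\Sigma_G(X)$, I would invoke the explicit combinatorial description of weights of $H^0(X,\OO_X(E))$ from \cite[Proposition 2.3.2]{BB96}.

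Granted $\gamma\in\Lambda_G(\XX)$, the function $f$ is the pullback along $\pi$ of a $B$-eigenfunction $\overline f\in\CC(\XX)^{(B)}_\gamma$ whose polar divisor on $\XX$ is a single $G$-stable prime divisor, forcing $\pi(E)\in\partial_G\XX$ and giving $H^0(\XX,\OO_\XX(\pi(E)))\neq\CC$. For uniqueness, any other $E''\in\partial_G X$ with $\pi(E'')=\pi(E)$ would satisfy $\pi_*(\rho_{G,X}(E''))\in\QQ_{>0}\cdot\rho_{G,\XX}(\pi(E))$ by Luna--Vust, yielding $\langle\rho_{G,X}(E''),\gamma\rangle<0$, a contradiction. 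For part (2), the identity $\pi_*(\rho_{G,X}(E))=\rho_{G,\XX}(\pi(E))$ follows by pairing both sides with $\gamma$: the left gives $-1$, the right gives $-c$ for the ramification index $c$, so $c=1$. The orthogonality (\ref{eqn:orth}) is immediate for $E'\in(\partial_G X)^\ell$ since $\sigma_{\pi(E)}\in\Lambda_G(\XX)\subseteq(\V^\ell_G(X))^\perp$; for $E'\in(\partial_G X)^{n\ell}\setminus\{E\}$, expand $\gamma=\sum_i a_i\sigma_i$ in the basis dual to $-\rho_{G,\XX}(D_i)$, observe $a_i\geq 0$ (by writing each $-\rho_{G,\XX}(D_i)$ as a non-negative combination of $\rho_{G,\XX}(\Delta_G(\XX))$ as in the proof of Lemma \ref{lemma:opposite} and using $\langle\rho_{G,X}(D),\gamma\rangle\geq 0$ for colors $D$), and note $a_{i_0}=1$ where $D_{i_0}=\pi(E)$; then from $\langle\rho_{G,X}(E'),\gamma\rangle=-\sum_i\lambda_i^{E'}a_i\geq 0$, where $\lambda_i^{E'}\geq 0$ are the coefficients of $\pi_*(\rho_{G,X}(E'))$ in the $\rho_{G,\XX}(D_i)$, each term $\lambda_i^{E'}a_i$ must vanish; in particular $\lambda_{i_0}^{E'}=0$, which gives $\langle\rho_{G,X}(E'),\sigma_{\pi(E)}\rangle=-\lambda_{i_0}^{E'}=0$. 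For part (3), uniqueness from (1), the ramification identity from (2), and the fact from Lemma \ref{lemma:equation} that colors of $X$ map to colors of $\XX$ yield $\pi^*\OO_\XX(\pi(E))\cong\OO_X(E)$ as $G$-linearized invertible sheaves; pullback of sections gives a $G$-equivariant injection $H^0(\XX,\OO_\XX(\pi(E)))\hookrightarrow H^0(X,\OO_X(E))$, and surjectivity follows from the $B$-isotypic decomposition since every $B$-eigensection of $\OO_X(E)$ has weight in $\Lambda_G(\XX)$ and hence descends along $\pi$.
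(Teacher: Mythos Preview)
Your overall strategy parallels the paper's: pick a highest weight $\gamma$ of $H^0(X,\OO_X(E))/\CC$, show it lies in $(\V^\ell_G(X))^\perp$, descend the eigenfunction to $\XX$, and read off the conclusions. The arguments for $\gamma\perp\V^\ell_G(X)$, for $\pi(E)\in\partial_G\XX$, for uniqueness, and your sheaf-theoretic proof of part~(\ref{lemma:movedareorthogonal:simple}) are all sound, the last being a reasonable alternative to the paper's multiplicity-free argument.

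The genuine gap is the integrality ``upgrade''. You assert that $\gamma\in\Span_\ZZ\Sigma_G(X)=\Lambda_G(\XX)$ by invoking \cite[Proposition~2.3.2]{BB96}, but this is the heart of the matter and cannot be dispatched by a vague citation. In general $(\V^\ell_G(X))^\perp\cap\Lambda_G(X)$ strictly contains $\Span_\ZZ\Sigma_G(X)$ (cf.\ the proof of Proposition~\ref{prop:wonderfulclosure}, where $\Lambda_G(G/\widehat H)$ is only of finite index in $(\V^\ell_G(X))^\perp$), so orthogonality to $\V^\ell_G(X)$ alone does not place $\gamma$ in $\Lambda_G(\XX)$. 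The paper proceeds differently and more carefully: from $\gamma\in(\V^\ell_G(X))^\perp$ it only obtains $n\gamma\in\Lambda_G(\XX)$ for some minimal $n\geq 1$, then applies \cite[Theorem~2.2.3]{Br07} on the wonderful variety $\XX$ to deduce that $n\gamma$ is a positive integer multiple of the spherical root $\sigma_{\pi(E)}$, and finally invokes the key equality $\Sigma_G(X)=\Sigma_G(\XX)$ (the defining feature of the wonderful closure, Proposition~\ref{prop:wonderfulclosure}) to conclude that $\sigma_{\pi(E)}$ is indivisible already in $\Lambda_G(X)$, forcing $n=1$ and in fact $\gamma=\sigma_{\pi(E)}$. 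The paper explicitly remarks (just after Corollary~\ref{cor:sum}) that this use of $\Sigma_G(X)=\Sigma_G(\XX)$ is crucial and that the argument would fail with the spherical closure $\overline H$ in place of $\widehat H$. Your citation does not engage with this point, and without it the descent of $f$ to $\XX$, the equality $\pi_*(\rho_{G,X}(E))=\rho_{G,\XX}(\pi(E))$, and the surjectivity in part~(\ref{lemma:movedareorthogonal:simple}) are all unjustified.

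Once $\gamma=\sigma_{\pi(E)}$ is established, the orthogonality~(\ref{eqn:orth}) follows in one line (a spherical root is $\leq 0$ on all of $\V_G(X)$, hence $=0$ on each $\rho_{G,X}(E')$ with $E'\neq E$), so your longer detour through the expansion $\gamma=\sum_i a_i\sigma_i$ becomes unnecessary.
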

\begin{proof}
Let $\gamma\neq 0$ be such that $H^0(X,\OO_X(E))^{(B)}_\gamma\neq\varnothing$. Since $E\in(\partial_GX)^{n\ell}$, the character $\gamma$ is non-negative on $\rho_{G,X}((\partial_GX)^\ell)$, which generates the whole $\V^\ell_G(X)$ as a convex cone, because $X$ is complete. It follows that $\gamma\in\V^\ell_G(X)^\perp$, which implies that some positive integral multiple of $\gamma$, say $n\gamma$, lies in  $\Lambda_G(\XX)$. Let us also assume that it is indecomposable in $\Lambda_G(\XX)$, i.e.\ that $n$ is minimal satisfying $n>0$ and $n\gamma\in\Lambda_G(X)$.

Consider $\pi(E)$: it is a proper subset of $\XX$ because $\pi_*(\rho_{G,X}(E))\neq 0$. If it is not a $G$-stable prime divisor of $\XX$, then $\pi_*(\rho_{G,X}(E))$ doesn't lie on any $1$-dimensional face of $\V_G(\XX)$. On the other hand, each element of $\partial_G \XX$ is the image $\pi(D)$ of some $G$-stable prime divisor $D$ of $X$, with $\pi_*(\rho_{G,X}(D))$ equal to a positive rational multiple of $\rho_{G,\XX}(\pi(D))$. This implies that $n\gamma\in\Lambda_G(\XX)$ is non-negative on $\rho_{G,\XX}(\partial_G\XX)$ and negative on $\pi_*(\rho_{G,X}(E))$, which is absurd because $\rho_{G,\XX}(\partial_G\XX)$ generates $\V_G(\XX)$ as a convex cone.

We conclude that $\pi(E)\in\partial_G\XX$, and that $E$ is the unique element of $\partial _GX$ whose image is $\pi(E)$, because $n\gamma$ is non-negative on $\rho_{G,X}(E')$ for any $E'\in\partial_GX$ different from $E$.  Let $0 > -m = \langle \rho_{G,\XX}(\pi(E)),n\gamma\rangle$. Then $H^0(\XX,\OO_\XX(m\pi(E)))\neq \CC$.

From \cite[Theorem 2.2.3]{Br07} it follows that $H^0(\XX,\OO_\XX(\pi(E)))\neq\CC$, that the quotient $H^0(\XX,\OO_\XX(\pi(E)))/\CC$ is irreducible with highest weight $\sigma_{\pi(E)}$, and that any $\chi\in\Lambda_G(\XX)$ satisfying
\begin{equation}\label{eqn:}
\langle \rho_{G,\XX}(D), \chi \rangle \geq 0 \quad \forall D\in(\partial_G \XX\smallsetminus\{\pi(E)\}\cup \Delta_G(\XX), \qquad \langle \rho_{G,\XX}(\pi(E)), \chi \rangle < 0
\end{equation}
is a positive multiple of $\sigma_{\pi(E)}$. We have then shown (\ref{lemma:movedareorthogonal:piD}). It also follows that $n\gamma$ is a positive multiple of $\sigma_{\pi(E)}$, whence $\gamma$ is non-positive on $\V_G(X)$ and so it is zero on $\rho_{G,X}(D)$ for all $D\in\partial_GX$ different from $E$. This shows (\ref{eqn:orth}).

Now recall that $n\gamma$ is indecomposable in $\Lambda_G(\XX)$. Since it is a positive multiple of $\sigma_{\pi(E)}$, it is equal to $\sigma_{\pi(E)}$. On the other hand $\Sigma_G(X)=\Sigma_G(\XX)$ and $\sigma_{\pi(E)}$ is also indecomposable in $\Lambda_G(X)$. Therefore $n=1$, and we have
\[
\langle \rho_{G,X}(E), \gamma \rangle = \langle \pi_*(\rho_{G,X}(E)), \gamma \rangle = -1 = \langle \rho_{G,\XX}(\pi(E)),\sigma_{\pi(E)}\rangle,
\]
whence $\pi_*(\rho_{G,X}(E)) = \rho_{G,\XX}(\pi(E))$. The proof of part (\ref{lemma:movedareorthogonal:degree+orth}) is complete.

Finally, since $\gamma$ is the highest weight of an arbitrary non-trivial $G$-submodule of $H^0(X,\OO_X(E))$ and the latter is multiplicity-free since $X$ is spherical, the proof of (\ref{lemma:movedareorthogonal:simple}) is also complete.
\end{proof}

Notice that property (\ref{lemma:movedareorthogonal:degree+orth}) is a rather strong condition on the fan $\F_G(X)$. It implies that any maximal cone $c$ of $\F_G(X)$ is generated by $\rho_{G,X}(E)$ together with the intersection $c\cap\sigma_{\pi(E)}^\perp$.

\begin{example}\label{ex:blowup1}
Let us illustrate Lemma~\ref{lemma:movedareorthogonal} in an example. Consider again $G=\SL(n+1)$ acting on $X=\Bl_p(\PP^{n+1})\times (\PP^n)^*$ as in Example~\ref{ex:blowup0}. Set $\D=(\partial X)^\ell=\{D_1,D_2\}$, and recall that $X$ has a third $G$-invariant prime divisor $E$. The prime divisor $\pi(E)=\{([v],[\eta])\;|\; \eta(v)=0\}$ of $\XX=\PP^n\times(\PP^n)^*$ is not stable under the action of $\Autz(\XX) = \PSL(n+1)\times\PSL(n+1)$. The spherical root $\sigma_{\pi(E)}$ is the unique element $\sigma$ of $\Sigma_G(X)$. As in Lemma~\ref{lemma:movedareorthogonal}, all maximal cones of $\F_G(X)$ are generated by $\rho(E)$ together with their intersection with $\sigma^\perp$, which is here the linear part of $\V_G(X)$.
\end{example}

\begin{definition}
We denote by
\[
\Lambda_G(X,\E) \subseteq \Lambda_G(X)
\]
the sublattice generated by the elements $\sigma_{\pi(E)}$ for all $E\in\E$.
\end{definition}

\begin{corollary}\label{cor:sum}
\[
\Lambda_G(X) = \rho_{G,X}(\E)^\perp \oplus \Lambda_G(X,\E).
\]
\end{corollary}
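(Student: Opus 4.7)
The approach is to reduce the claim to an elementary linear-algebra calculation based on the pairing matrix between $\rho_{G,X}(\E)$ and the generators $\{\sigma_{\pi(E)}\}_{E\in\E}$ of $\Lambda_G(X,\E)$.

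First I would establish the key identity
\[
\langle \rho_{G,X}(E'),\, \sigma_{\pi(E)} \rangle \;=\; -\delta_{E,E'}, \qquad E, E' \in \E,
\]
i.e.\ that the relevant pairing matrix is $-I$. The off-diagonal case is exactly the orthogonality (\ref{eqn:orth}) in Lemma~\ref{lemma:movedareorthogonal}(\ref{lemma:movedareorthogonal:degree+orth}) applied to the $1$-dimensional cone $c_{X,E'}$; the diagonal case combines $\pi_*(\rho_{G,X}(E))=\rho_{G,\XX}(\pi(E))$ from the same part of the lemma with the definition of $\sigma_{\pi(E)}$ as the spherical root of $\XX$ dual to $-\rho_{G,\XX}(\pi(E))$.

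Once this identity is in hand, both inclusions fall out at once. Given $\chi\in\Lambda_G(X)$, set $n_E=\langle\rho_{G,X}(E),\chi\rangle\in\ZZ$ and define $\chi_2=-\sum_{E\in\E} n_E\,\sigma_{\pi(E)}$, $\chi_1=\chi-\chi_2$; a single pairing computation shows $\chi_1\in\rho_{G,X}(\E)^\perp$, while $\chi_2\in\Lambda_G(X,\E)$ by construction. For the sum to be direct, any element $\sum_{E} a_E\,\sigma_{\pi(E)}$ lying in $\rho_{G,X}(\E)^\perp$ must have all $a_E=0$, again immediate from the identity.

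There is no real obstacle here: Lemma~\ref{lemma:movedareorthogonal} has already done the heavy lifting by pinning down how $\pi_*$ sends the divisorial valuations of the $E\in\E$ to the basis $-\rho_{G,\XX}(\partial_G\XX)\subset\N_G(\XX)$ dual to the spherical roots. The only minor point worth checking is that $E\mapsto\sigma_{\pi(E)}$ is injective on $\E$, which follows from the uniqueness assertion in Lemma~\ref{lemma:movedareorthogonal}(\ref{lemma:movedareorthogonal:piD}) together with the fact that distinct elements of $\partial_G\XX$ yield distinct spherical roots through their duality with the basis $-\rho_{G,\XX}(\partial_G\XX)$.
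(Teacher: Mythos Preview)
Your proposal is correct and follows essentially the same route as the paper's proof: both establish the pairing identity $\langle \rho_{G,X}(E'),\sigma_{\pi(E)}\rangle=-\delta_{E,E'}$ from Lemma~\ref{lemma:movedareorthogonal} and then observe that the direct sum decomposition is an immediate linear-algebra consequence. The paper's version is simply terser, compressing your explicit splitting $\chi=\chi_1+\chi_2$ and directness check into the phrase ``The corollary follows.''
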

\begin{proof}
From Lemma \ref{lemma:movedareorthogonal} we deduce that for all $E\in\E$ the element $\rho_{G,X}(E)$ is $-1$ on the spherical root $\sigma_{\pi(E)}$ of $X$, and zero on all other spherical roots of $X$. The corollary follows.
\end{proof}

\begin{remark}
In the proof of Lemma~\ref{lemma:movedareorthogonal} we used the crucial fact that $X$ and $\XX(G/\widehat H)$ have the same spherical roots. The decomposition of $\Lambda(G/H)$ into the above direct sum would indeed be false in general, if we had used $\XX(G/\overline H)$ instead of $\XX(G/\widehat H)$.
\end{remark}

\begin{definition}
Define
\[
\EE = \left\{\pi(E) \;\middle\vert\;E\in\E \right\},
\]
and
\[
\DD = \partial\XX\smallsetminus\EE.
\]
\end{definition}

\begin{definition}\label{def:Anl}
Let $A'=A'(X,\D)$ be the universal cover of the semisimple group $\Autz(\XX,\DD)$, and $A=A(X,\D)=A'\times C$. We denote by
\[
\vartheta'\colon G' \to A'
\]
the lift of $\theta_{G',\XX}\colon G'\to\Autz(\XX,\DD)$ to $A'$, and we set
\[
\vartheta = \vartheta'\times \id_C \colon G\to A.
\]
We also choose a Borel subgroup $B_A$ of $A$ such that $B_A\supseteq \vartheta(B)$.
\end{definition}

Now we are ready to state the main result of this section.

\begin{theorem}\label{thm:nonlinear}
Under the assumptions listed at the beginning of this section, and with the above notations:
\begin{enumerate}
\item\label{thm:nonlinear:lift} The action of $A(X,\D)$ lifts from $\XX$ to $X$, and the image of $A(X,\D)$ inside $\Autz(X)$ is equal to $\Autz(X,\D)$.
\item As an $A=A(X,\D)$-variety, $X$ is $G$-regular with boundary $\D$.
\item\label{thm:nonlinear:restriction} Via the inclusion $B_A\supseteq \vartheta(B)$ the lattice $\Lambda_A(X)$ is identified with the lattice $\rho_{G,X}(\E)^\perp\subseteq \Lambda_G(X)$, and $\N_A(X)$ with $\Lambda_G(X,\E)^\perp\subseteq \N_G(X)$.
\item\label{thm:nonlinear:colors} The set $\Delta_A(X)$ is in natural bijection with $\Delta_G(X)$, in such a way that $\rho_{A,X}(D)=\rho_{G,X}(D)|_{\rho_{G,X}(\E)}$.
\item\label{thm:nonlinear:PeSigma} The parabolic subgroup $\P_A(X)$ is equal to $\P_A(\XX)$, and we have $\Sigma_A(X)=\Sigma_A(\XX)$. Both can be computed using Theorem~\ref{thm:wonderful} and \cite[Sections 3.2 -- 3.6]{Pe09}.
\item Each cone of $\F_G(X)$ intersects $\Lambda_G(X,\E)^\perp$ in a face, and the set of these intersections for all cones of $\F_G(X)$ is the  fan $\F_A(X)$.
\end{enumerate}
\end{theorem}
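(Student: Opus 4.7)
The plan is to prove the lifting statement~(\ref{thm:nonlinear:lift}) first, since this identifies $\Autz(X,\D)$ with the image of $A(X,\D)$; parts~(2)--(6) then follow by translating invariants through this identification, using Lemma~\ref{lemma:movedareorthogonal} and Corollary~\ref{cor:sum}.

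For part~(\ref{thm:nonlinear:lift}), I would first match dimensions. Applying the exact sequence~(\ref{eqn:exact}) to $\D$ inside $\Autz(X)$ and to $\DD$ inside $\Autz(\XX)$, and using the standing assumptions $\theta_{G,X}(G)=\Autz(X,\partial_G X)$ and $\theta_{G,\XX}(G')=\Autz(\XX,\partial_G\XX)$, we obtain
\[
\Lie\Autz(X,\D) \;=\; \Lie\theta_{G,X}(G) \oplus \bigoplus_{E\in\E}\frac{H^0(X,\OO_X(E))}{\CC}
\]
and the analogous formula for $\Autz(\XX,\DD)$. By Lemma~\ref{lemma:movedareorthogonal}(\ref{lemma:movedareorthogonal:simple}) the two extra direct sums are isomorphic as $G$-modules, so $\dim A = \dim\Autz(X,\D)$. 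To integrate this Lie algebra identification into a group homomorphism $\vartheta_X\colon A\to\Autz(X,\D)$, I would exploit the Stein factorization $X\xrightarrow{\psi}X'\xrightarrow{f}\XX$: the finite map $f$ allows the simply connected cover $A'$ to act on the normal variety $X'$ extending its action on $\XX$, while Proposition~\ref{prop:kerpsi} provides the semidirect product description $\Autz(X,(\partial_G X)^{n\ell})\cong\theta_{G,X}(G')\ltimes(\ker\psi_*)^\circ$. Combining the two actions, with Lemma~\ref{lemma:automfiber} used to control the extra one-parameter subgroups on the fibers of $\psi$, produces $\vartheta_X$; its image lies in $\Autz(X,\D)$ since each extra infinitesimal generator is a flow by a section of $\OO_X(E)$ for some $E\in\E$ and preserves every $D\in\D$, and equals $\Autz(X,\D)$ by the dimension count and connectedness.

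For the remaining parts the plan is as follows. Part~(\ref{thm:nonlinear:restriction}): the inclusion $\vartheta(B)\subseteq B_A$ gives $\Lambda_A(X)\subseteq\Lambda_G(X)$; the extra $B_A$-root vectors, with $B$-highest weights $\sigma_{\pi(E)}$ for $E\in\E$, annihilate a $B$-eigenfunction of weight $\chi$ precisely when $\chi$ is orthogonal to all $\sigma_{\pi(E)}$, and by Corollary~\ref{cor:sum} this yields $\Lambda_A(X)=\rho_{G,X}(\E)^\perp$ and dually $\N_A(X)=\Lambda_G(X,\E)^\perp$. Part~(\ref{thm:nonlinear:colors}): the $B_A$-stable prime divisors meeting the open $A$-orbit are those $B$-stable divisors fixed set-wise by the extra unipotent root subgroups; these subgroups move each $E\in\E$ but stabilize every element of $\Delta_G(X)$, giving $\Delta_A(X)=\Delta_G(X)$, with $\rho_{A,X}(D)$ being the restriction of $\rho_{G,X}(D)$ to $\Lambda_A(X)$ by the definition of $\rho$ on $B_A$-eigenfunctions. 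Part~(\ref{thm:nonlinear:PeSigma}): since $\pi$ is $A$-equivariant and dominant, the open $A$-orbit of $X$ surjects onto the open $A$-orbit of $\XX$, so $\P_A(X)=\P_A(\XX)$ and $\Sigma_A(X)=\Sigma_A(\XX)$, both computable for $\XX$ via Theorem~\ref{thm:wonderful}. Parts~(2) and~(6): each $A$-orbit on $X$ is a union of $G$-orbits determined by $\pi$-preimages of $A$-orbits in $\XX$ intersected with the toric fiber structure; Corollary~\ref{cor:sum} and Lemma~\ref{lemma:movedareorthogonal}(\ref{lemma:movedareorthogonal:degree+orth}) then imply that for each $c_{X,Y}\in\F_G(X)$ the intersection $c_{X,Y}\cap\Lambda_G(X,\E)^\perp$ is a face of $c_{X,Y}$, and the collection of such intersections constitutes $\F_A(X)$; $A$-regularity of $X$ follows since smoothness and toroidality are preserved.

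The main obstacle is the construction of $\vartheta_X$: the Lie algebra identification is a formal consequence of~(\ref{eqn:exact}) and Lemma~\ref{lemma:movedareorthogonal}, but upgrading it to a group homomorphism requires carefully combining the lift of the $\Autz(\XX,\DD)$-action along the finite map $f$ with the semidirect product structure from Proposition~\ref{prop:kerpsi}, and verifying that the two resulting actions of the $G'$-factor coincide via the standing assumption $\theta_{G,X}(G)=\Autz(X,\partial_G X)$. Once $\vartheta_X$ exists, the computation of the invariants is essentially bookkeeping.
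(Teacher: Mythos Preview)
Your strategy for part~(\ref{thm:nonlinear:lift}) has a genuine gap: the step ``the finite map $f$ allows the simply connected cover $A'$ to act on the normal variety $X'$ extending its action on $\XX$'' is not justified. Lifting group actions through finite morphisms is not automatic, even for simply connected groups acting on normal varieties; one would need $X'$ to be, say, the normalization of $\XX$ in an $A'$-stable field extension, and there is no evident reason for $\CC(X')\supseteq\CC(\XX)$ to be $A'$-stable before knowing that $A'$ acts on $X$. Moreover, Proposition~\ref{prop:kerpsi} describes $\Autz(X,(\partial_G X)^{n\ell})$, the group stabilizing \emph{all} non-linear boundary divisors, whereas here $\E\subseteq(\partial_G X)^{n\ell}$ and $\Autz(X,\D)$ genuinely moves the elements of $\E$; so the semidirect product structure you invoke is for the wrong group and does not help produce $\vartheta_X$.

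The paper proceeds quite differently and avoids lifting altogether. It first identifies, via Luna's lattice correspondence (Lemma~\ref{lemma:lunalattice} and Proposition~\ref{proposition:H_A}), the candidate generic stabilizer $H_A\subseteq A$ corresponding to the lattice $\Lambda=\rho_{G,X}(\E)^\perp$, and verifies $\overline{H_A}=K_A$ and $\vartheta(H)=H_A\cap\vartheta(G)$. This yields an inclusion $G/H\hookrightarrow A/H_A$. It then constructs \emph{ex novo} a toroidal embedding $X_A$ of $A/H_A$ from a fan $\F_A$ obtained by intersecting the cones of $\F_G(X)$ with $\Lambda_G(X,\E)^\perp$ and pushing forward by $s^*$ (Corollary~\ref{cor:fan}). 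The heart of the argument is Theorem~\ref{thm:nonlinear:lifts}, which shows by induction on $|\E|$ that $X_A$, viewed as a $G$-variety via $\vartheta$, is toroidal with fan $\F_G(X)$, hence isomorphic to $X$. This simultaneously produces the $A$-action on $X$ and computes $\F_A(X)$; the equality of the image with $\Autz(X,\D)$ is then Corollary~\ref{cor:nonlinear:image}, using again the exact sequence~(\ref{eqn:partial}).

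Your treatment of parts~(2)--(6) is roughly correct in spirit, but note that your justification of $\Lambda_A(X)=\rho_{G,X}(\E)^\perp$ is phrased backwards: the clean reason is that a $B_A$-eigenfunction can have neither zeros nor poles along the divisors $E\in\E$ (since these are not $B_A$-stable), forcing its $B$-weight to lie in $\rho_{G,X}(\E)^\perp$; the converse requires the explicit construction of $H_A$ as in Lemma~\ref{lemma:restrictions2}.
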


The proof occupies the rest of the section. In particular, part (\ref{thm:nonlinear:PeSigma}) follows immediately from part (\ref{thm:nonlinear:lift}). The first statement of part (\ref{thm:nonlinear:colors}) follows from the fact that the $G$-colors and the $A$-colors of $\XX$ coincide (see \cite[Theorem 2.4.2 (2)]{Br07}), and their inverse images under $\pi$ are the colors of $X$. The second statement of part (\ref{thm:nonlinear:colors}) follows from the first, and from part (\ref{thm:nonlinear:restriction}). The rest follows from Lemma~\ref{lemma:restrictions2}, Theorem~\ref{thm:nonlinear:lifts}, and Corollary~\ref{cor:nonlinear:image}.

\begin{example}\label{ex:blowup2}
Let us discuss the above theorem in the setting of Example~\ref{ex:blowup1}. Here $\DD=\varnothing$, and notice that the action of $\Autz(\XX,\DD)=\PSL(n+1)\times\PSL(n+1)$ does not lift to an action on $X=\Bl_p(\PP^{n+1})\times (\PP^n)^*$, whereas the action of the universal cover $A'=\SL(n+1)\times \SL(n+1)$ does. Moreover $C$ is trivial, so $A=A'$.

Under the action of $A$ the variety $X$ is toroidal of rank $1$; its lattice is generated by the last fundamental weight of the first factor $\SL(n+1)$ of $A$, and this lattice is identified as in the theorem with the one-dimensional sublattice $\rho_{G,X}(\E)^\perp = \ZZ\omega^G_n$ of $\Lambda_G(X)$. The other summand $\Lambda_G(X,\E)$ of $\Lambda_G(X)$ is the lattice $\ZZ\sigma=\ZZ(\omega_1+\omega_n)$.

As a consequence $\N_A(X)$ is identified with the one-dimensional subspace $\sigma^\perp$ of $\N_G(X)$, the fan $\F_A(X)$ is the unique fan with two (opposite) cones of dimension $1$, and is obtained from $\F_G(X)$ by intersecting all cones with $\sigma^\perp$.
\end{example}

In view of proving part (\ref{thm:nonlinear:lift}) of Theorem~\ref{thm:nonlinear}, we start finding a candidate for a generic stabilizer of the $A$-action on $X$. Let $\widehat H_A\subseteq A$ be%
\footnote{Our notation is consistent thanks to Corollary \ref{cor:H_A}.}
the stabilizer of the point $e\widehat H\in G/\widehat H \subseteq \XX$. Then we have $\vartheta(\widehat H) = \widehat H_A \cap \vartheta(G)$.

We also notice that thanks to our general assumptions any $G$-linearization of an invertible sheaf $\XX$ can be uniquely extended to an $A$-linearization, inducing an identification of the two groups $\Pic^G(\XX)$ and $\Pic^A(\XX)$.

\begin{lemma}\label{lemma:restrictions}
\begin{enumerate}
\item\label{lemma:restrictions:injective} The pull-back of characters of $B_A$ along $\vartheta|_{B}$ induces an injective map $r\colon \Lambda_A(\XX) \to \Lambda_G(\XX)$. It maps $\Sigma_A(\XX)$ to the set of spherical roots $\{\sigma_D \,|\, D\in\DD\}$.
\item \label{lemma:restrictions:V} The dual map $r^*\colon\N_G(\XX)\to \N_A(\XX)$ satisfies
\[
r^*(V_G(\XX)) = V_A(\XX).
\]
\item \label{lemma:restrictions:openorbit} We have that $\partial_A \XX=\D$, and
\[
A/\widehat H_A = \XX \smallsetminus \bigcup_{D\in \DD} D.
\]
\item \label{lemma:restrictions:surjective} The pull-back of characters of $\widehat H_A$ along $\vartheta|_{\widehat H}$ is a surjective homomorphism $r'\colon\Chi(\widehat H_A)\to\Chi(\widehat H)$ with free kernel of rank $|\EE|$.
\end{enumerate}
\end{lemma}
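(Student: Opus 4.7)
The plan is to prove the four parts in the order (1), (3), (2), (4), with the last clause of (1) deferred until (2) and (3) are in place.

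Part (1) is essentially formal. Any $f\in\CC(\XX)^{(B_A)}_\chi$ is $B$-semiinvariant via $\vartheta|_B$ with weight $r(\chi):=\chi\circ\vartheta|_B$, which defines $r$. If $r(\chi)=0$, then $f$ is $B$-invariant on the $G$-spherical variety $\XX$, hence constant, so $\chi=0$. The identification $r(\Sigma_A(\XX))=\{\sigma_D\,|\,D\in\DD\}$ is postponed: once (2) and (3) are known, Theorem~\ref{thm:wonderful} applied to $\Autz(\XX,\DD)$ shows that $\XX$ is wonderful as an $A$-variety with boundary $\DD$, so $\Sigma_A(\XX)$ is the basis of $\Lambda_A(\XX)$ dual to $-\rho_{A,\XX}(\DD)$; and since $r^*(\rho_{G,\XX}(D))=\rho_{A,\XX}(D)$ for $D\in\DD$, this basis is identified via $r$ with $\{\sigma_D\,|\,D\in\DD\}\subseteq\Lambda_G(\XX)$.

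For Part (3), the open subset $U:=\XX\setminus\bigcup_{D\in\DD}D$ is $A$-stable since $A$ acts through $\Autz(\XX,\DD)$. Because $\XX$ is $G$-wonderful, $U$ is the disjoint union of the $G$-orbits whose closures are $\bigcap_{D\in S}D$ for $S\subseteq\EE$, and $e\widehat H$ lies in the open one. For the reverse inclusion $U\subseteq A\cdot e\widehat H$, I use the exact sequence~(\ref{eqn:exact}) restricted to $\Autz(\XX,\DD)$, which surjects onto $\bigoplus_{E\in\EE}H^0(\XX,\OO_\XX(E))/\CC$; a non-zero section in the $E$th summand yields an infinitesimal automorphism whose associated vector field is transverse to $E$ at a generic point of $E\cap U$, so its exponential moves the point off $E$ while remaining in $U$. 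Iterating over $E\in\EE$ connects every $G$-stratum in $U$ to the open one, so $A\cdot e\widehat H=U$ and hence $U=A/\widehat H_A$. The identity $\partial_A\XX=\DD$ then follows since $A$-stable prime divisors are closures of codimension-$1$ $A$-orbits, obtained by merging codimension-$1$ $G$-orbit closures contained in $\bigcup_{D\in\DD}D$.

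For Part (2), once (3) is in place, both cones have combinatorial descriptions: $V_G(\XX)=\{v\in\N_G(\XX)\,|\,\langle v,\sigma_D\rangle\leq 0\;\forall D\in\partial_G\XX\}$ and, by the deferred part of (1), $V_A(\XX)=\{w\in\N_A(\XX)\,|\,\langle w,\sigma_D\rangle\leq 0\;\forall D\in\DD\}$. The inclusion $r^*(V_G(\XX))\subseteq V_A(\XX)$ is immediate. Conversely, for $w\in V_A(\XX)$ the fibre $(r^*)^{-1}(w)\subseteq\N_G(\XX)$ is an affine subspace parallel to $\Lambda_A(\XX)^\perp$, which pairs non-trivially with the spherical roots $\{\sigma_D\,|\,D\in\EE\}$ outside $\Lambda_A(\XX)$; choosing these pairings sufficiently negative produces $v\in(r^*)^{-1}(w)\cap V_G(\XX)$. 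This finishes (2) and the deferred identification in (1).

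Finally, for Part (4), $\vartheta^{-1}(\widehat H_A)=\widehat H$ directly, so $r'$ is the pullback along $\vartheta|_{\widehat H}$. I compute both $\Chi(\widehat H)$ and $\Chi(\widehat H_A)$ via diagram~(\ref{eqn:diagram}): under the $A$-action $\Delta_A(\XX)=\Delta_G(\XX)\sqcup\EE$ (the $E\in\EE$ become $A$-colors since they meet $A/\widehat H_A$ and are $B_A$-stable), while $\Lambda_G(\XX)/\Lambda_A(\XX)$ is free of rank $|\EE|$ generated by $\{\sigma_D\,|\,D\in\EE\}$. Comparing the two Luna sequences produces a short exact sequence
\[
0\to\ZZ^\EE\to\Chi(\widehat H_A)\xrightarrow{r'}\Chi(\widehat H)\to 0,
\]
establishing surjectivity and the rank of $\ker r'$. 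The main obstacle is Part (3), specifically ensuring that a section in $H^0(\XX,\OO_\XX(E))/\CC$ really yields a vector field transverse to $E$ at a generic point of $E\cap U$, so that its one-parameter subgroup genuinely mixes adjacent $G$-strata; a secondary subtlety needed for Part (4) is the $B_A$-stability of each $E\in\EE$, which must be verified beyond mere $\vartheta(B)$-stability.
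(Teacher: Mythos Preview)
Your approach to parts (1), (2), (3) is viable (modulo the apparent circularity between ``deferred (1)'' and (2): as written you claim the spherical-root identification needs (2), but then use it \emph{in} (2); in fact only (3) and the wonderfulness of $\XX$ under $A$ are needed, so this is repairable). The paper proceeds more directly: it proves the spherical-root clause of (1) independently, by comparing the $T$-weights and the $T_A$-weights in $T_z\XX/T_z(Gz)$ and $T_z\XX/T_z(Az)$ at the common $B^-$- and $B^-_A$-fixed point $z$ (or alternatively by invoking the explicit lists in \cite{Pe09}). Then (2) is an immediate corollary, and (3) follows in one line from the fact, due to Brion \cite[Theorem~2.4.2]{Br07}, that $\XX$ is \emph{wonderful} under $\Autz(\XX,\DD)$: once each $E\in\EE$ is not $A$-stable, $\partial_A\XX=\DD$ and the open $A$-orbit is automatically the complement. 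Your vector-field/transversality argument for (3) can be made to work, but it is unnecessary once this structural input is available.

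There is, however, a genuine error in your Part (4). The claim $\Delta_A(\XX)=\Delta_G(\XX)\sqcup\EE$ is false: in fact $\Delta_A(\XX)=\Delta_G(\XX)$, again by \cite[Theorem~2.4.2(2)]{Br07}. The ``secondary subtlety'' you flag is exactly the point that fails: the divisors $E\in\EE$ are $\vartheta(B)$-stable but \emph{not} $B_A$-stable (for instance, in Example~\ref{ex:blowup} the incidence divisor in $\PP^n\times(\PP^n)^*$ is visibly not stable under a product Borel of $\SL_{n+1}\times\SL_{n+1}$). Consequently your two Luna sequences for $G$ and for $A$ do not have middle terms of different ranks; they share the \emph{same} middle term $\Chi(C)\times\ZZ^\Delta$ with $\Delta=\Delta_G(\XX)=\Delta_A(\XX)$, and $r'$ is simply the induced map on quotients
\[
\frac{\Chi(C)\times\ZZ^\Delta}{\overline\rho_{A,\XX}(\Lambda_A(\XX))}\longrightarrow\frac{\Chi(C)\times\ZZ^\Delta}{\overline\rho_{G,\XX}(\Lambda_G(\XX))},
\]
whose kernel is $\Lambda_G(\XX)/r(\Lambda_A(\XX))$, free of rank $|\EE|$ with basis $\{\sigma_E\mid E\in\EE\}$ by part (1). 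Your short exact sequence is the right target, but the route you propose to it rests on a false premise.
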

\begin{proof}
The injectivity of the map $r$ is obvious, since it corresponds to taking a $B_A$-semiinvariant $f\in\CC(\XX)$ and considering it as a $B$-semiinvariant. The rest of part (\ref{lemma:restrictions:injective}) follows from the results of \cite{Pe09}, and it can also be shown directly using the following fact: the spherical roots of $\XX$ are the $T$-weights appearing in the quotient of tangent spaces
\[
\frac{\mathrm T_z\XX}{\mathrm T_z(G\,z)},
\]
where $z\in\XX$ is the unique fixed point of $B^-$. Let us choose a maximal torus $T_A$ of $A$ containing $\vartheta(T)$: if $B^-_A\subseteq A$ is the Borel subgroup satisfying $B_A\cap B^-_A = T_A$ then $B^-_A$ contains $\vartheta(B^-)$. Hence $z$ is also the unique $B^-_A$-fixed point, therefore the spherical roots of $\XX$ as an $A$-variety are the $T_A$-weights appearing in the quotient of tangent spaces
\[
\frac{\mathrm T_z\XX}{\mathrm T_z(A\,z)},
\]
form the set $\Sigma_A(\XX)$. This implies part (\ref{lemma:restrictions:injective}), and part (\ref{lemma:restrictions:V}) is an immediate consequence.

The first statement of part (\ref{lemma:restrictions:openorbit}) stems from the fact that each $E\in\EE$ is not stable under the action of $A$, and the second follows from the first because $\XX$ is wonderful under the action of $A$.

For part (\ref{lemma:restrictions:surjective}), we notice that $r'$ can be identified with the natural map
\[
\frac{\Chi(C)\times \ZZ^\Delta}{\overline\rho_{A,\XX}(\Lambda_A(\XX))} \to \frac{\Chi(C)\times \ZZ^\Delta}{\overline\rho_{G,\XX}(\Lambda_G(\XX))}
\]
(see diagram (\ref{eqn:diagram})). The kernel of $r'$ is then $\Lambda_G(\XX)/r(\Lambda_A(\XX))$ which is free, generated by the spherical roots $\sigma_E$ for all $E\in\EE$ by part (\ref{lemma:restrictions:injective}).
\end{proof}

Let us put together two copies of the diagram (\ref{eqn:diagram}), one for the $G$- and one for the $A$-action, also adding the extensions of $\overline\rho_G$ and $\overline\rho_A$ resp.\ to $\Lambda_G(G/H)$ and $\Lambda_A(A/\widehat H_A)$, as in Section~\ref{s:wclosure}. We obtain a commutative diagram
\[
\xymatrixrowsep{8pt}
\xymatrixcolsep{13pt}
\xymatrix{
\Lambda_A(A/K_A) \ar@{^{(}->}[r] &
\Lambda_A(A/\widehat H_A) \ar@{^{(}->}[rr]^-{\overline\rho_A} \ar@{^{(}->}[ddd]^-{r}&
 &
\Chi(C)\times\ZZ^\Delta \ar@{->>}[r]^-{\tau_A} \ar[dr]_-{\sigma_A} \ar@{=}[ddd] &
\Chi(K_A) \ar@{->>}[r] &
\Chi(\widehat H_A) \ar@{->>}[ddd]^-{r'}\\
 & &  &  & \Chi(B_A) \ar[d]^-{(\vartheta|_B)^*} &  \\
 & & &  & \Chi(B) &  \\
\Lambda_G(G/K) \ar@{^{(}->}[r] &
\Lambda_G(G/\widehat H) \ar@{^{(}->}[r] &
\Lambda_G(G/H) \ar@{^{(}->}[r]^-{\overline\rho_G} &
\Chi(C)\times\ZZ^\Delta \ar@{->>}[r]^-{\tau_G} \ar[ur]^-{\sigma_G} &
\Chi(K) \ar@{->>}[r] &
\Chi(\widehat H) \\
}
\]
where $K_A$ is the spherical closure of $\widehat H_A$, and $K$ is the spherical closure of $\widehat H$ (and of $H$).
The last arrow of the first row is the restriction map, which can be seen as the quotient
\[
\Chi(K_A) \to \Chi(K_A)/\Chi(K_A)^{\widehat H_A}\cong \Chi(\widehat H_A).
\]
The same remark holds for the last map of the second row and the groups $K, \widehat H$.

In order to determine a generic stabilizer in $A$ for $X$, we start defining a lattice $\Lambda\subseteq \Lambda_G(G/H)$. A posteriori, it is the lattice of $B$-eigenvalues of $B_A$-eigenvectors $f\in\CC(X)^{(B_A)}$. Such a function $f$ cannot have zeros nor poles on the divisors in $\E$, since these are not $A$-stable, nor are $A$-colors of $X$. This suggests the following definition of $\Lambda$.

\begin{definition}
Let $\Lambda$ be the lattice
\[
\Lambda = \rho_{G,X}(\E)^\perp \, \subseteq \Lambda_G(G/H).
\]
\end{definition}

\begin{proposition}\label{proposition:H_A}
The following inclusion holds:
\[
\overline\rho_G(\Lambda)\supseteq \overline\rho_A(\Lambda_A(A/K_A)).
\]
The subgroup $H_A$ of $K_A$ corresponding to the lattice $\overline\rho_G(\Lambda)$ is a spherical subgroup of $A$. We have $\overline{H_A}=K_A$ and $\vartheta(H) = H_A \cap \vartheta(G)$. This induces a $G$-equivariant identification of $G/H$ with an open subset of $A/H_A$.
\end{proposition}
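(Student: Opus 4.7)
The plan rests on Luna's diagram and Lemma~\ref{lemma:lunalattice}: first I establish the lattice inclusion, then I produce $H_A$, and finally I match characters on the $G$- and $A$-sides to derive the remaining assertions.

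For the inclusion $\overline\rho_G(\Lambda) \supseteq \overline\rho_A(\Lambda_A(A/K_A))$, note that $\Lambda_A(A/K_A) \subseteq \Lambda_A(\XX)$ since $\widehat H_A \subseteq K_A$, and by Lemma~\ref{lemma:restrictions}(\ref{lemma:restrictions:injective}) the image $r(\Lambda_A(\XX))$ equals $\Span_\ZZ\{\sigma_D \mid D \in \DD\}$. Each such $\sigma_D$ lies in $\Lambda$: Lemma~\ref{lemma:movedareorthogonal}(\ref{lemma:movedareorthogonal:degree+orth}) gives $\pi_*(\rho_{G,X}(E)) = \rho_{G,\XX}(\pi(E))$ with $\pi(E) \in \EE$, while $\{-\rho_{G,\XX}(D)\}_{D \in \partial_G\XX}$ is dual to $\Sigma_G(\XX)$ and $\pi(E) \notin \DD$. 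The commutativity $\overline\rho_A = \overline\rho_G \circ r$ from the diagram then yields the claim. Now Lemma~\ref{lemma:lunalattice}, applied to $K_A$ with the sublattice $\overline\rho_G(\Lambda)$, produces $H_A \triangleleft K_A$ with $K_A/H_A$ diagonalizable; sphericity of $H_A$ follows from injectivity of $\sigma_A|_{\overline\rho_G(\Lambda)}$, which in turn follows from $(\vartheta|_B)^* \circ \sigma_A \circ \overline\rho_G = \sigma_G \circ \overline\rho_G = \id_{\Lambda_G(G/H)}$, forcing $\sigma_A \circ \overline\rho_G$ to be already injective on $\Lambda$.

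For $\overline{H_A} = K_A$, the inclusion $K_A \subseteq \overline{H_A}$ holds because $K_A$ normalizes $H_A$ and acts trivially on $\Delta(A/H_A)$, these colors being in natural bijection (via $\pi^{H_A,K_A}$) with those of $A/K_A$, on which $K_A$ acts trivially by definition of spherical closure. The reverse inclusion will follow from Proposition~\ref{prop:wonderfulclosure}: combining the chain $H_A \subseteq \widehat H_A \subseteq K_A$, the wonderfulness of $\widehat H_A$, and the coincidence of spherical roots ensured by Lemma~\ref{lemma:restrictions}(\ref{lemma:restrictions:injective}), $\widehat H_A$ must coincide with the wonderful closure of $H_A$, whence $\overline{H_A} = \overline{\widehat H_A} = K_A$.

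For $\vartheta(H) = H_A \cap \vartheta(G)$, I will verify that $r'$ induces an isomorphism $\Chi(\widehat H_A/H_A) \xrightarrow{\sim} \Chi(\widehat H/H)$: both sides identify with $\Lambda/\Span_\ZZ\{\sigma_D \mid D \in \DD\}$, the source via Lemma~\ref{lemma:lunalattice} applied to the pair $H_A \subseteq \widehat H_A$ inside $K_A$ combined with $\overline\rho_A = \overline\rho_G \circ r$, and the target via Corollary~\ref{cor:sum} together with $\Lambda_G(G/\widehat H) = \Span_\ZZ \Sigma_G(\XX)$. Granted this, for $h \in H$ and $\chi \in \Chi(\widehat H_A/H_A)$ one has $\chi(\vartheta(h)) = r'(\chi)(h) = 1$, proving $\vartheta(H) \subseteq H_A$; conversely, if $\vartheta(g) \in H_A \cap \vartheta(G) \subseteq \widehat H_A \cap \vartheta(G) = \vartheta(\widehat H)$, writing $g = hz$ with $h \in \widehat H$ and $z \in \ker\vartheta$, the same character computation forces $h \in H$. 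The $G$-equivariant map $gH \mapsto \vartheta(g) H_A$ is then well-defined, and realizes $G/H$ (up to a finite cover by $\ker\vartheta/(\ker\vartheta \cap H)$) as the open $\vartheta(G)$-orbit in $A/H_A$. The main obstacle will be the character bookkeeping around $r'$ and the careful invocation of Proposition~\ref{prop:wonderfulclosure} to identify $\widehat H_A$ as the wonderful closure of $H_A$.
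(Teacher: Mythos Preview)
Your argument for the inclusion $\overline\rho_G(\Lambda) \supseteq \overline\rho_A(\Lambda_A(A/K_A))$ and for the sphericity of $H_A$ is correct, though it proceeds more combinatorially than the paper (which pulls back an actual $B_A$-eigenfunction to $X$ and observes that its divisor avoids the elements of $\E$). Your treatment of the character isomorphism $\Chi(\widehat H_A)^{H_A} \cong \Chi(\widehat H)^H$ also tracks the paper's computation.

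The genuine gap is in your proof of $\overline{H_A} = K_A$. You correctly establish $K_A \subseteq \overline{H_A}$ (the paper does not isolate this step). For the reverse inclusion you appeal to Proposition~\ref{prop:wonderfulclosure}, claiming that $\widehat H_A$ is the wonderful closure of $H_A$. But the uniqueness clause of Proposition~\ref{prop:wonderfulclosure} singles out the wonderful closure among wonderful subgroups between $H_A$ and $\overline{H_A}$ by the condition $\Sigma(A/H_A) = \Sigma(A/\widehat H_A)$, and you have not computed $\Sigma(A/H_A)$. Lemma~\ref{lemma:restrictions}(\ref{lemma:restrictions:injective}) describes only $\Sigma_A(\XX) = \Sigma(A/\widehat H_A)$; it says nothing about the spherical roots of $A/H_A$. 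In the paper this equality of spherical roots is Corollary~\ref{cor:H_A}, proved \emph{after} the present proposition via Lemma~\ref{lemma:restrictions2}, and the proof of Lemma~\ref{lemma:restrictions2} uses the extension of $\overline\rho_A$ to $\Lambda_A(A/H_A)$, which is only defined once one knows $\overline{H_A} = K_A$. So your argument is circular. The paper breaks the circle by invoking \cite[Lemme~6.3.3]{Lu01} directly: this reduces $\overline{H_A} = K_A$ to the combinatorial condition $\sigma_A(\overline\rho_G(\Lambda)) \cap S^\circ_A = \varnothing$, which is then verified by a case-by-case check against the classification in \cite{Pe09}.

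A minor point: your parenthetical ``up to a finite cover by $\ker\vartheta/(\ker\vartheta \cap H)$'' concedes that your map $gH \mapsto \vartheta(g)H_A$ may not be injective, whereas the proposition asserts an actual identification of $G/H$ with an open subset of $A/H_A$. The paper is terse here too, but you should at least address why $\ker\vartheta \subseteq H$ (or otherwise justify injectivity).
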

\begin{proof}
Let $\chi\in\Lambda_A(A/K_A)\subseteq \Lambda_A(A/\widehat H_A)$. If $f \in\CC(A/\widehat H_A)^{(B_A)}_\chi$, then consider its pull-back on $X$, denoted by $\widetilde f$. It is also a $B$-eigenvector with $B$-eigenvalue $\widetilde\chi= r(\chi)$.

We know that the divisor $\div(\widetilde f)$ on $X$ is $B_A$-stable, so in general it is a linear combination of colors and $A$-stable prime divisors. In any case, its components do not belong to $\E$, because the latter consists of prime divisors moved by $A$. It follows that all discrete valuations in $\V_G(G/H)$ coming from these elements of $\E$ must take the value $0$ on $\widetilde\chi$.

Therefore $\widetilde\chi\in\Lambda$, and the first assertion is proved. In order to verify that $H_A$ is spherical we have to show that $\sigma_A$ restricted to $\overline\rho_G(\Lambda)=\tau_A^{-1}\left(\Chi(K_A)^{H_A}\right)$ is injective. But we already know that the restriction of $\sigma_G$ on $\overline\rho_G(\Lambda_G(G/H))$ is injective, and that $\Lambda\subseteq \Lambda_G(G/H)$: this proves the second assertion.

The equality $\overline{H_A}=K_A$ stems from part (3) of Lemma~\ref{lemma:lunalattice}, as soon as we check its hypothesis regarding the set $\eS_A^\circ(A/K_A)$, i.e.\ we have to check that $\sigma_A(\overline\rho_G(\Lambda))\cap \eS_A^\circ(A/K_A)=\varnothing$. 

Let $\alpha\in \eS_A^\circ(A/K_A)$. We claim that $r(\alpha)\in \eS_G^\circ(G/K)$, i.e.\ the image $r(2\alpha)$ is a spherical root $2\alpha'$ of $G/K$ for some simple root $\alpha'$ of $G$, such that $\langle \alpha', \beta^\vee\rangle$ is even for all simple root $\beta$ of $G$ such that $2\beta\in\Sigma_G(G/K)$. This stems from an elementary case-by-case check on the classification of \cite{Pe09}: first of all if $2\alpha$ is a spherical root of $\XX$ for the action of $\Autz(\XX,\DD)$ there is an indecomposable factor $\XX'$ of $\XX$ (see \cite[Definition 2.2.1]{Pe09}) with $2\alpha$ among its spherical roots. Then one checks that either the factors of $\theta_{G,\XX}(G)$ and of $\Autz(\XX,\DD)$ acting non-trivially on $\XX'$ are isomorphic, or $\XX'$ is equal to the case $\mathbf 2_{rk=2}$ of \cite{Pe09}.

In the first case we have immediately $r(\alpha)\in \eS_G^\circ(G/K)$. In the second case we have $r(2\alpha)=2\alpha'\in\Sigma_G(\XX)$ for a simple root $\alpha'$ of $G$. Moreover, for {\em all} $\beta\in\Sigma_G(\XX)$ the value $\langle \alpha', \beta^\vee\rangle$ is even. This implies again $r(\alpha)\in \eS_G^\circ(G/K)$.

By Lemma~\ref{lemma:lunalattice} applied to the inclusion $H\subseteq K$ we have $\sigma_G(\overline\rho_G(\Lambda_G(G/H)))\cap \eS_G^\circ(G/K)=\varnothing$. Therefore we have $\sigma_A(\overline\rho_G(\Lambda))\cap \eS_A^\circ(A/K_A)=\varnothing$, and hence $\overline{H_A}=K_A$.

Next, we claim that $r'$ induces an isomorphism between $\Chi(\widehat H_A)^{H_A}$ and $\Chi(\widehat H)^{H}$. This shows that $\widehat H_A/H_A\cong \widehat H/H$, and the rest of the lemma follows. To prove the claim, it is enough to notice that
\begin{eqnarray*}
\Chi(\widehat H_A)^{H_A} & \cong & \frac{\overline\rho_G(\Lambda)}{\ker\tau_A}\\
& = & \frac{\overline\rho_G(\Lambda)}{\overline\rho_G(r(\Lambda_A(A/\widehat H_A)))}\\
& \cong & \frac{\overline\rho_G(\Lambda)\oplus\overline\rho_G(\Lambda_G(X,\E))}{\overline\rho_G(\Lambda_G(G/\widehat H))}\\
& = & \frac{\overline\rho_G(\Lambda\oplus\Lambda_G(X,\E))}{\overline\rho_G(\Lambda_G(G/\widehat H))}\\
& = & \frac{\overline\rho_G(\Lambda_G(G/H))}{\overline\rho_G(\Lambda_G(G/\widehat H))}\\
& \cong & \Chi(\widehat H)^H,
\end{eqnarray*}
and that the resulting isomorphism $\Chi(\widehat H_A)^{H_A}\cong \Chi(\widehat H)^{H}$ is indeed induced by $r'$.
\end{proof}

We build an embedding $X_A$ of $A/H_A$, and then prove that we actually obtain $X$. 

\begin{lemma}\label{lemma:restrictions2}
The pull-back of characters of $B_A$ to $B$ along $\vartheta|_B$ induces an injective map $s\colon\Lambda_A(A/H_A)\to\Lambda_G(G/H)$ whose image is $\Lambda$. The dual map $s^*\colon \N_G(G/H) \to \N_A(A/H_A)$ satisfies
\[
s^*(\V_G(G/H))= \V_A(A/H_A),
\] 
and induces an isomorphism
\[
s^*|_{\V^\ell_G(G/H)}\colon\V^\ell_G(G/H) \to \V^\ell_A(A/H_A).
\]
\end{lemma}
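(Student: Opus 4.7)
The plan is to reduce the lemma to Proposition~\ref{proposition:H_A}, which built $H_A$ precisely so that $\overline\rho_A(\Lambda_A(A/H_A))=\overline\rho_G(\Lambda)$ as subgroups of $\Chi(C)\times\ZZ^\Delta$, combined with the decomposition $\Lambda_G(G/H)=\Lambda\oplus\Lambda_G(X,\E)$ from Corollary~\ref{cor:sum}. The first step is to show that $s$ is injective and that its image is $\Lambda$. Injectivity is immediate since $G/H$ is Zariski dense in $A/H_A$ (last sentence of Proposition~\ref{proposition:H_A}), so a $B_A$-semiinvariant is determined by its restriction to $G/H$. For the image, I would check that $\overline\rho_G\circ s=\overline\rho_A$ once we identify the color coordinates: the $C$-component is preserved because $\vartheta|_C=\id_C$, and for each $D\in\Delta=\Delta_G(G/\widehat H)=\Delta_A(A/\widehat H_A)$ the $B_A$-color of $A/H_A$ lying above $D$ restricts to the $B$-color of $G/H$ lying above $D$, with the same valuation of any chosen eigenvector. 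The equality $s(\Lambda_A(A/H_A))=\Lambda$ would then follow from Proposition~\ref{proposition:H_A}.

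For the valuation cone identity, I would use the description $\V(Y)=\{v\,:\,\langle v,\sigma\rangle\le 0\;\forall\sigma\in\Sigma(Y)\}$ and the fact that $s|_{\Lambda_A(A/\widehat H_A)}$ coincides with the map $r$ of Lemma~\ref{lemma:restrictions}, since both are pull-backs of $B_A$-characters along $\vartheta|_B$. By Proposition~\ref{prop:wonderfulclosure} one has $\Sigma_G(G/H)=\Sigma_G(\XX)$ and $\Sigma_A(A/H_A)=\Sigma_A(\XX)$, and Lemma~\ref{lemma:restrictions}(\ref{lemma:restrictions:injective}) yields $s(\Sigma_A(A/H_A))\subseteq \Sigma_G(G/H)$. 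The inclusion $s^*(\V_G(G/H))\subseteq \V_A(A/H_A)$ then follows from the defining inequalities. For the reverse inclusion, given $w\in\V_A(A/H_A)$, I will lift $w$ to $\Lambda_G(G/H)$ using Corollary~\ref{cor:sum}: set $v|_\Lambda=w$ via $s$ and $\langle v,\sigma_{\pi(E)}\rangle=0$ for every $E\in\E$. The resulting $v$ lies in $\V_G(G/H)$ since it is non-positive on all spherical roots, namely the $\sigma_D$ with $D\in\DD$ (which lie in $\Lambda$) and the $\sigma_{\pi(E)}$ with $E\in\E$ (which lie in $\Lambda_G(X,\E)$).

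For the last statement, the same lift takes $\V_A^\ell(A/H_A)$ surjectively onto $\V_G^\ell(G/H)$, because if $w$ annihilates $\Sigma_A(A/H_A)$ then the $v$ just defined annihilates every spherical root of $G/H$. Injectivity of $s^*|_{\V_G^\ell(G/H)}$ follows because if $v\in\V_G^\ell(G/H)$ satisfies $s^*(v)=0$ then $v$ already vanishes on $\Lambda$, while being in $\V_G^\ell(G/H)$ forces $v$ to vanish on every $\sigma_{\pi(E)}$ and hence on $\Lambda_G(X,\E)$, so $v=0$ on all of $\Lambda_G(G/H)$.

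The main obstacle will be establishing the compatibility $\overline\rho_G\circ s=\overline\rho_A$, i.e.\ carefully matching the colors of $G/H$ with those of $A/H_A$ under the chain of identifications $\Delta_G(G/H)\leftrightarrow\Delta_G(G/K)=\Delta=\Delta_A(A/K_A)\leftrightarrow\Delta_A(A/H_A)$; once this identification is in place, the remaining assertions are routine combinatorial manipulations of the valuation cones via the direct sum decomposition of Corollary~\ref{cor:sum}.
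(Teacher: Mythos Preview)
Your overall strategy is sound, and for the first assertion it essentially matches the paper's: both use the identity $\overline\rho_A(\Lambda_A(A/H_A))=\overline\rho_G(\Lambda)$ built into the construction of $H_A$ (the paper simply asserts the commutative square with $\overline\rho_A$ and $\overline\rho_G$, so your caution about matching the color coordinates is well placed but not a real obstacle).

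There is, however, a genuine circularity in your treatment of the valuation cone. You invoke Proposition~\ref{prop:wonderfulclosure} to obtain $\Sigma_A(A/H_A)=\Sigma_A(\XX)$, but that proposition only yields this once one knows that $\widehat H_A$ is the wonderful closure of $H_A$. In the paper this is Corollary~\ref{cor:H_A}, and its proof \emph{uses} the present lemma. So as written you are assuming what is to be deduced afterwards. The fix is short: since Proposition~\ref{proposition:H_A} already gives $H_A\subseteq\widehat H_A\subseteq K_A=\overline{H_A}$, Lemma~\ref{lemma:inclusion} applies directly and yields $\V_A(A/H_A)=(\pi^{H_A,\widehat H_A}_*)^{-1}(\V_A(A/\widehat H_A))$. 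Hence the walls of $\V_A(A/H_A)$ are those of $\V_A(\XX)$, and each $\sigma\in\Sigma_A(A/H_A)$ is a positive rational multiple of an element of $\Sigma_A(\XX)$. This weaker statement is all your inequalities and your lifting construction need, so once you replace the appeal to Proposition~\ref{prop:wonderfulclosure} by this observation, your argument goes through.

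With that correction, your route differs from the paper's in two places. For $\V_A(A/H_A)\subseteq s^*(\V_G(G/H))$ the paper observes that an $A$-invariant valuation is \emph{a fortiori} $G$-invariant and hence extends to $\Lambda_G(G/H)$, whereas you build the lift combinatorially via Corollary~\ref{cor:sum}; for the opposite inclusion the paper uses the commutative square relating $s^*$, $r^*$, and the maps $\pi_*$ together with Lemma~\ref{lemma:inclusion} and Lemma~\ref{lemma:restrictions}(\ref{lemma:restrictions:V}). For the isomorphism on linear parts the paper gives a one-line dimension count ($\dim\V_G^\ell=\dim\widehat H/H=\dim\widehat H_A/H_A=\dim\V_A^\ell$, using the last assertion of Proposition~\ref{proposition:H_A}), while you prove injectivity and surjectivity directly. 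Both routes are valid; yours is more explicit, the paper's is shorter.
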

\begin{proof}
Let $\gamma\in\Lambda_A(A/H_A)$: it is the $B_A$-eigenvalue of a $B_A$-eigenvector $f\in\CC(A/H_A)^{(B_A)}$. But $f$ is a $B$-eigenvector too and the character $\chi=s(\gamma)$ is its $B$-eigenvalue. Both the $B$- and the $B_A$-eigenvalue determine $f$ up to a multiplicative constant, hence $s$ is injective.

Consider the commutative diagram
\[
\xymatrix{
\Lambda_A(A/H_A) \ar@{^{(}->}[r]^-{\overline\rho_A} \ar@{^{(}->}[d]^{s} &
\Pic^A(\XX) \ar@{=}[d] \\
\Lambda_G(G/H) \ar@{^{(}->}[r]^-{\overline\rho_G} &
\Pic^G(\XX)
}
\]
From the definition of $H_A$ we have $\overline\rho_A(\Lambda_A(A/H_A))=\overline\rho_G(\Lambda)$, therefore we obtain $s(\Lambda_A(A/H_A))=\Lambda$.

Let $v\in\V_A(A/H_A)$. It corresponds to an $A$-invariant valuation, which is {\em a fortiori} $G$-invariant too: in other words we can compute $v$ also on $\Lambda_G(G/H)$ obtaining an element of $\V_G(G/H)$. This shows that $s^*(\V_G(G/H))\supseteq \V_A(A/H_A)$.

Then we notice that $s$ extends the map $r$ of Lemma~\ref{lemma:restrictions}. This gives the commutative diagram
\[
\xymatrixcolsep{40pt}
\xymatrix{
\N_G(G/H) \ar@{->>}[r]^{\pi^{H,\widehat H}_*} \ar@{->>}[d]^{s^*} &
\N_G(G/\widehat H) \ar@{->>}[d]^{r^*} \\
\N_A(A/H_A) \ar@{->>}[r]^{\pi^{H_A,\widehat H_A}_*}  &
\N_A(A/\widehat H_A)
}
\]
where $V_A(A/H_A)$ (resp.\ $V_G(G/H)$) is the inverse image of $V_A(A/\widehat H_A)$ (resp.\ of $V_G(G/\widehat H)$) thanks to Lemma \ref{lemma:inclusion}.

This, together with Lemma \ref{lemma:restrictions}, part (\ref{lemma:restrictions:V}), proves $s^*(\V_G(G/H))= \V_A(A/H_A)$. The image of $\V^\ell_G(G/H)$ is contained in $\V^\ell_A(A/H_A)$, and we conclude the proof observing that the dimensions of $\V^\ell_G(G/H)$ and $\V^\ell_G(A/H_A)$ are both equal to the dimension of $\widehat H/H\cong \widehat H_A/H_A$.
\end{proof}
\begin{corollary}\label{cor:H_A}
The wonderful closure of $H_A$ is $\widehat H_A$.
\end{corollary}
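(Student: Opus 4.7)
My plan is to apply Proposition~\ref{prop:wonderfulclosure}, which characterizes the wonderful closure of $H_A$ (inside $\overline{H_A}=K_A$) as the unique subgroup whose character lattice is spanned by $\Sigma_A(A/H_A)$, equivalently as the unique wonderful subgroup between $H_A$ and $K_A$ with the same spherical roots as $A/H_A$. Since $\widehat H_A$ is already known to be wonderful (being the generic stabilizer of the wonderful $A$-variety $\XX$) and lies inside $K_A$, the corollary reduces to the two assertions $H_A\subseteq \widehat H_A$ and $\Sigma_A(A/H_A) = \Sigma_A(\XX)$.

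To compute $\Sigma_A(A/H_A)$, I will use the decomposition $\Lambda_G(X) = \Lambda \oplus \Lambda_G(X,\E)$ from Corollary~\ref{cor:sum}. Partition $\Sigma_G(X) = \Sigma_G(\XX)$ into $\Sigma_\DD = \{\sigma_D\mid D\in\DD\}$ and $\Sigma_\EE = \{\sigma_{\pi(E)}\mid E\in\E\}$. By Lemma~\ref{lemma:movedareorthogonal}(\ref{lemma:movedareorthogonal:degree+orth}), the first family lies in $\Lambda$ while the second generates $\Lambda_G(X,\E)$. Dually writing $\N_G(X)_\QQ = \Hom_\QQ(\Lambda,\QQ)\oplus \Hom_\QQ(\Lambda_G(X,\E),\QQ)$, the defining inequalities $\langle v,\sigma\rangle\le 0$ of $\V_G(X)$ split according to these two factors, so $\V_G(X)=\V_1\times\V_2$ with $\V_1$ cut out by $\Sigma_\DD$. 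Lemma~\ref{lemma:restrictions2} identifies $s^\ast(\V_G(X))$ with $\V_A(A/H_A)$ and $s$ with the inclusion $\Lambda_A(A/H_A)\hookrightarrow\Lambda_G(X)$ having image $\Lambda$, so $\V_A(A/H_A)=\V_1$, whose facets are defined by $\Sigma_\DD$. Each $\sigma_D\in\Sigma_\DD$ is indivisible in $\Lambda_G(X)$ (as a spherical root of $X$), hence \emph{a fortiori} indivisible in the sublattice $\Lambda$; therefore $\Sigma_A(A/H_A)=\Sigma_\DD$ under $s$.

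On the other hand, Lemma~\ref{lemma:restrictions}(\ref{lemma:restrictions:injective}) says $r(\Sigma_A(\XX))=\{\sigma_D\mid D\in\DD\}=\Sigma_\DD$. Since $r$ and $s$ are both induced by restricting $B_A$-characters along $\vartheta|_B$ and agree on $\Lambda_A(\XX)\subseteq \Lambda_A(A/H_A)$, we obtain $\Sigma_A(A/H_A)=\Sigma_A(\XX)=\Sigma_A(A/\widehat H_A)$ as subsets of $\Lambda_G(X)$. This also shows $\Lambda_A(A/\widehat H_A)\subseteq \Lambda_A(A/H_A)$, which via the commutativity of $\overline\rho_A$ with the inclusion translates by the inclusion-reversing bijection of Lemma~\ref{lemma:lunalattice} to $H_A\subseteq \widehat H_A$.

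The main obstacle is the bookkeeping in the second paragraph: one must verify carefully that the restriction $s^\ast$ acts as a genuine projection onto the $\Lambda^\vee$-summand (so that $\V_G(X)$ really factors as a product) and that indivisibility is preserved from $\Lambda_G(X)$ down to $\Lambda$; both rest on the clean direct sum decomposition $\Lambda_G(X)=\Lambda\oplus\Lambda_G(X,\E)$ supplied by Corollary~\ref{cor:sum}, which is where the choice of $\widehat H$ (rather than $\overline H$) is essential, as highlighted in the remark following that corollary.
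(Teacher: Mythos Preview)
Your proposal is correct and follows essentially the same approach as the paper: both arguments use Proposition~\ref{prop:wonderfulclosure}, reduce to showing $H_A\subseteq \widehat H_A\subseteq \overline{H_A}$ together with $\Sigma_A(A/H_A)=\Sigma_A(A/\widehat H_A)$, and extract the latter from Lemma~\ref{lemma:restrictions2}. The paper's proof is considerably terser: it cites the chain $H_A\subseteq \widehat H_A\subseteq K_A=\overline{H_A}$ as already established ``by construction'' in Proposition~\ref{proposition:H_A} (the computation $\widehat H_A/H_A\cong \widehat H/H$ there presupposes $H_A\subseteq \widehat H_A$, which follows from $r(\Lambda_A(A/\widehat H_A))\subseteq\Lambda$), and simply asserts that the equality of spherical roots follows from Lemma~\ref{lemma:restrictions2}. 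Your second paragraph is a correct unpacking of this last step, making explicit the product decomposition $\V_G(X)=\V_1\times\V_2$ coming from Corollary~\ref{cor:sum} and the indivisibility check.

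Two minor remarks. First, your re-derivation of $H_A\subseteq \widehat H_A$ at the end is redundant: you invoke Lemma~\ref{lemma:restrictions2}, whose proof already uses this inclusion (via Lemma~\ref{lemma:inclusion} applied to $H_A\subseteq\widehat H_A\subseteq K_A$), so at that point the inclusion is available and need not be rederived. Second, in the sentence ``$r$ and $s$ \ldots\ agree on $\Lambda_A(\XX)\subseteq \Lambda_A(A/H_A)$'' the inclusion should be stated as a consequence rather than an assumption; the cleaner phrasing is that $r$ and $s$ are both restrictions of the injective map $(\vartheta|_B)^*\colon\Chi(B_A)\to\Chi(B)$, so $r(\Sigma_A(\XX))=\Sigma_\DD=s(\Sigma_A(A/H_A))$ forces $\Sigma_A(\XX)=\Sigma_A(A/H_A)$ inside $\Chi(B_A)$, from which the lattice inclusion (and hence $H_A\subseteq\widehat H_A$) follows.
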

\begin{proof}
By construction $H_A\subseteq \widehat H_A \subseteq K_A=\overline{H_A}$. From Lemma~\ref{lemma:restrictions2} we deduce that $A/H_A$ and $A/\widehat H_A$ have the same spherical roots: the corollary follows then from Proposition~\ref{prop:wonderfulclosure}.
\end{proof}

We shall now define the fan of convex cones of $X_A$, using that of $X$. First, we collect some consequences on $\F(X)$ of the analysis we have developed so far.
\begin{definition}
Let $\F$ be a fan of convex cones, consider a subset $\F'\subset \F$ and let $c\in\F\smallsetminus\F'$ be $1$-dimensional. Then $\F$ is the {\em join} of $\F'$ and $c$ if each element of $\F\smallsetminus\F'$ is the convex cone generated by $c$ and an element of $\F'$.
\end{definition}
\begin{corollary}\label{cor:fan}
\begin{enumerate}
\item\label{cor:fan:sigma} Let $E\in\E$, and let $\F_G^{\sigma_{\pi(E)}}(X)$ be the fan of convex cones obtained intersecting each element of $\F_G(X)$ with $\sigma_{\pi(E)}^\perp$. Then $\F_G(X)$ is the join of $\F_G^{\sigma_{\pi(E)}}(X)$ and $c_{X,E}$.
\item\label{cor:fan:injective} Let $\F_G^\Lambda(X)$ be the fan of convex cones obtained intersecting each element of $\F_G(X)$ with $\Lambda_G(X,\E)^\perp$. Then the restriction of $s^*$ to $\supp\F_G^\Lambda(X)$ is injective, and $s^*(\supp\F_G^\Lambda(X)) = \V_A(A/H_A)$.
\item\label{cor:fan:smoothcomplete} The set
\[
\left\{ s^*(c) \,\middle\vert\, c\in\F_G^\Lambda(X) \right\}
\]
is a fan of polyhedral convex cones in $\N_A(A/H_A)$. The associated embedding of $A/H_A$ is smooth and complete.
\end{enumerate}
\end{corollary}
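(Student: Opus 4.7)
The plan is to handle the three parts sequentially, with the smoothness claim in part~(\ref{cor:fan:smoothcomplete}) being the main work.

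For part~(\ref{cor:fan:sigma}), I would apply Lemma~\ref{lemma:movedareorthogonal}(\ref{lemma:movedareorthogonal:degree+orth}), especially equation~(\ref{eqn:orth}). Any cone $c \in \F_G(X)$ is generated by $\{\rho_{G,X}(D) : D \in \sigma(c)\}$ for some subset $\sigma(c)\subseteq \partial_G X$, and (\ref{eqn:orth}) forces $\rho_{G,X}(D) \in \sigma_{\pi(E)}^\perp$ for every $D \in \partial_G X \setminus \{E\}$. Hence if $E \notin \sigma(c)$ then $c \subseteq \sigma_{\pi(E)}^\perp$, so $c$ already lies in $\F_G^{\sigma_{\pi(E)}}(X)$; if $E \in \sigma(c)$ then $c$ is generated by the ray $c_{X,E}$ together with the face $\sum_{D \in \sigma(c)\setminus\{E\}}\QQ_{\geq 0}\rho_{G,X}(D)$, which lies in $\sigma_{\pi(E)}^\perp$ and hence in $\F_G^{\sigma_{\pi(E)}}(X)$. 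This exhibits the join.

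Part~(\ref{cor:fan:injective}) rests on Corollary~\ref{cor:sum}, which provides a $\ZZ$-lattice direct sum $\Lambda_G(X) = \Lambda \oplus \Lambda_G(X,\E)$. Dualizing yields $\N_G(X) = \Lambda_G(X,\E)^\perp \oplus \ker s^*$, where $\ker s^*$ is the $\QQ$-span of $\rho_{G,X}(\E)$ and $s^*$ is the projection onto the first summand. Since $\supp\F_G^\Lambda(X) \subseteq \Lambda_G(X,\E)^\perp$, on which $s^*$ is a linear bijection to $\N_A(A/H_A)$, injectivity is clear. For the image, iterating part~(\ref{cor:fan:sigma}) gives $s^*(c) = s^*(c \cap \Lambda_G(X,\E)^\perp)$ for every $c \in \F_G(X)$, because the omitted ray generators $\rho_{G,X}(E)$ with $E \in \sigma(c)\cap\E$ lie in $\ker s^*$. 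Combined with Lemma~\ref{lemma:restrictions2}:
\[
s^*(\supp \F_G^\Lambda(X)) = s^*(\supp \F_G(X)) = s^*(\V_G(G/H)) = \V_A(A/H_A).
\]

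For part~(\ref{cor:fan:smoothcomplete}), the fan axioms transfer routinely from $\F_G(X)$ via $s^*|_{\Lambda_G(X,\E)^\perp}$: strict convexity of $s^*(c)$ follows from that of $c$; a face of $s^*(c)$ generated by $A' \subseteq \sigma(c)\setminus\E$ equals $s^*(c')$ where $c'$ is the face of $c$ generated by $A' \cup (\sigma(c)\cap\E)$, so it lies in the set; the common-face and relative-interior conditions pull back to the corresponding properties of $\F_G(X)$ via the bijection. Completeness is immediate from part~(\ref{cor:fan:injective}). The main obstacle is smoothness. For $D \in \sigma(c)\setminus\E$, equation~(\ref{eqn:orth}) places $\rho_{G,X}(D) \in \Lambda_G(X,\E)^\perp$, which via the $\ZZ$-splitting from Corollary~\ref{cor:sum} is identified with $\Hom_\ZZ(\Lambda,\ZZ)$. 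Smoothness of $X$ as a $G$-variety extends $\{\rho_{G,X}(D) : D \in \sigma(c)\}$ to a $\ZZ$-basis of $\Hom_\ZZ(\Lambda_G(X),\ZZ)$. I would then invoke the elementary lemma: if $L = M \oplus M'$ is a direct sum of free $\ZZ$-modules and $v_1,\ldots,v_a \in M$ extend to a $\ZZ$-basis of $L$, then they extend to a $\ZZ$-basis of $M$, because $M/\ZZ\langle v_i\rangle$ embeds in the free quotient $L/\ZZ\langle v_i\rangle$ and is itself free. Applied to $L = \Hom_\ZZ(\Lambda_G(X),\ZZ)$ and $M = \Hom_\ZZ(\Lambda,\ZZ)$, this gives smoothness of each $s^*(c)$.
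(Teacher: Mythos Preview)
Your proof is correct and follows essentially the same approach as the paper: part~(\ref{cor:fan:sigma}) via Lemma~\ref{lemma:movedareorthogonal}(\ref{lemma:movedareorthogonal:degree+orth}), part~(\ref{cor:fan:injective}) via Corollary~\ref{cor:sum} and Lemma~\ref{lemma:restrictions2}, and part~(\ref{cor:fan:smoothcomplete}) by transferring the smoothness criterion across the splitting of Corollary~\ref{cor:sum}. The only stylistic difference is in the smoothness argument: the paper works on the lattice side, observing that for a \emph{maximal} cone $c$ the dual basis $\{\gamma_i\}\subset\Lambda_G(X)$ with $c=\{\gamma_i\}^{\geq0}$ splits as $\{-\sigma_{\pi(E)}:E\in\E\}\cup\Psi$ with $\Psi$ a basis of $\Lambda$, whence $s^*(c\cap\Lambda_G(X,\E)^\perp)=(s^{-1}\Psi)^{\geq0}$; you instead work on the dual-lattice side with the ray generators and your free-module lemma. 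These are dual formulations of the same computation.
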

\begin{proof}
Part (\ref{cor:fan:sigma}) follows from Lemma \ref{lemma:movedareorthogonal}, part (\ref{lemma:movedareorthogonal:degree+orth}). Part (\ref{cor:fan:injective}) follows from part (\ref{cor:fan:sigma}) applied to all $E\in\E$, together with Corollary \ref{cor:sum} and Lemma \ref{lemma:restrictions2}. We turn to part (\ref{cor:fan:smoothcomplete}). Completeness of this embedding is an immediate consequence of part (\ref{cor:fan:injective}). For smoothness, we observe that a maximal cone $c$ of $\F_G(X)$ can be written as
\[
c = (\left\{ -\sigma_E \,\middle\vert\, E\in\E \right\}\cup \Psi)^{\geq0}
\]
where $\Psi$ is a basis of $\Lambda=\rho_{G,X}(\E)^\perp$, thanks to the smoothness of $X$ together with part (\ref{cor:fan:sigma}) applied to all $E\in\E$ and Corollary \ref{cor:sum}. Therefore
\[
s^*\left(c\cap\left(\Lambda_G(X,\E)^\perp\right)\right) = \left(s^{-1}\left(\Psi\right)\right)^{\geq0}.
\]
The smoothness characterization recalled in Section~\ref{s:definitions} is verified, since $s^{-1}\left(\Psi\right)$ is a basis of $\Lambda_A(A/H_A)$, and the proof is complete. 
\end{proof}

\begin{definition}
We define
\[
\F_A = \left\{ s^*(c) \,\middle\vert\, c\in\F_G^\Lambda(X) \right\},
\]
and we denote by $X_A$ the corresponding embedding of $A/H_A$.
\end{definition}

\begin{theorem}\label{thm:nonlinear:lifts}
The inclusion $G/H\subseteq A/H_A$ extends to an $A$-equivariant isomorphism between $X$ and $X_A$.
\end{theorem}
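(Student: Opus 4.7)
The plan is to apply the Luna--Vust classification: I will exhibit the open immersion $G/H\hookrightarrow A/H_A$ (Proposition~\ref{proposition:H_A}), composed with $A/H_A\hookrightarrow X_A$, as a $G$-equivariant rational map $\alpha\colon X\dashrightarrow X_A$, show that the $G$-colored fans of $X$ and $X_A$ coincide, and conclude that $\alpha$ is a $G$-equivariant isomorphism. The $A$-action on $X_A$ then transports along $\alpha$ to an $A$-action on $X$ for which $\alpha$ becomes $A$-equivariant.

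The core task is the computation of the $G$-colored fan of $X_A$. I will classify the $G$-stable prime divisors of $X_A$ into two families. Those not meeting $A/H_A$ are automatically $A$-stable, and, through the restriction map $s^*$, correspond to the $1$-cones of $\F_A = \{s^*(c) : c \in \F_G^\Lambda(X)\}$; by Corollary~\ref{cor:fan}(\ref{cor:fan:sigma}) these match the cones $c_{X,D}$ for $D\in\D=\partial_G X\setminus\E$. Those contained in $A/H_A\setminus G/H$ are not $A$-stable; using Lemma~\ref{lemma:restrictions2} (which gives $s(\Lambda_A(A/H_A)) = \rho_{G,X}(\E)^\perp$) together with Corollary~\ref{cor:sum}, I identify this second family with $\E$ so that the divisor corresponding to $E\in\E$ has valuation $\rho_{G,X}(E)$ on $\Lambda_G(G/H)$. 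Joining these $1$-cones with those of $\F_A$ reproduces the decomposition of $\F_G(X)$ obtained by iterating Corollary~\ref{cor:fan}(\ref{cor:fan:sigma}) over all $E\in\E$, and $X_A$ is toroidal as a $G$-variety thanks to the bijection $\Delta_A\leftrightarrow\Delta_G$ of Theorem~\ref{thm:nonlinear}(\ref{thm:nonlinear:colors}). Luna--Vust then yields $X\cong X_A$.

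The principal obstacle is the analysis of the second family: proving that each $E\in\E$ really arises from a single codimension-one $G$-orbit in $A/H_A\setminus G/H$ and computing its $\rho$-value. The main tool is Corollary~\ref{cor:sum}, which yields $\Lambda_G(G/H) = s(\Lambda_A(A/H_A))\oplus\bigoplus_{E\in\E}\ZZ\sigma_{\pi(E)}$: a $B$-semi-invariant function on $G/H$ with $B$-eigenvalue $\sigma_{\pi(E)}$ extends to a rational function on $A/H_A$ whose $G$-invariant divisor picks out the expected codimension-one $G$-orbit, and the computation of $\rho$-values reduces to a pull-back argument on $G$ along the lines of Lemma~\ref{lemma:equation}, exploiting that $H_A$ and $H$ sit inside $\widehat H_A = \widehat H_A$ with the same wonderful closure (Corollary~\ref{cor:H_A}) and hence share the same colors under the natural identification. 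Once this identification is in place, the fan equality $\F_G(X_A)=\F_G(X)$ gives the $G$-equivariant isomorphism $\alpha\colon X\to X_A$ by Luna--Vust, and the $A$-equivariance is immediate by construction.
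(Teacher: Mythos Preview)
Your overall strategy---compute the $G$-fan of $X_A$ and invoke the Luna--Vust classification---is exactly the paper's, and the division of $G$-stable prime divisors of $X_A$ into those outside $A/H_A$ (which are $A$-stable) and those inside $A/H_A\setminus G/H$ (which are not) is also what the paper does. Two points, however, need attention.

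First, citing Theorem~\ref{thm:nonlinear}(\ref{thm:nonlinear:colors}) to argue that $X_A$ is $G$-toroidal is circular: that part of Theorem~\ref{thm:nonlinear} concerns the $A$-action on $X$, and its proof presupposes the very isomorphism $X\cong X_A$ you are establishing. The paper avoids this by arguing directly on $X_A$: the $A$-equivariant map $\pi_A\colon X_A\to\XX$ pulls back colors, and the equality $\Delta_G(\XX)=\Delta_A(\XX)$ (from \cite[Theorem~2.4.2(2)]{Br07}) then forces $\Delta_G(X_A)=\Delta_A(X_A)$; since $\XX$ is $G$-toroidal, so is $X_A$. You should use this route.

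Second, your treatment of the ``second family''---identifying the $G$-stable prime divisors of $A/H_A\setminus G/H$ with $\E$ and computing their $\rho$-values---is the crux, and here your sketch is thin. Appealing to Lemma~\ref{lemma:equation} is misplaced: that lemma compares colors of $G/H$ and $G/K$ for $H$ \emph{normal} in $K$ inside the \emph{same} ambient group, whereas here you need to pass between the $G$-variety $G/H$ and the $A$-variety $A/H_A$. The paper handles this via $\pi_A$ again, reducing to $A/\widehat H_A\subset\XX$, which by Lemma~\ref{lemma:restrictions}(\ref{lemma:restrictions:openorbit}) is the complement of $\bigcup_{D\in\DD}D$ and hence, as a $G$-embedding of $G/\widehat H$, is completely explicit. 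The paper then treats the case $|\E|=1$ first (where $A/H_A\setminus G/H$ is a single prime divisor, and the argument that its cone is $c_{X,E}$ goes through Lemma~\ref{lemma:restrictions2} by observing that any $f\in\CC(G/H)^{(B)}_\lambda$ with $\lambda\in\Lambda=\rho_{G,X}(\E)^\perp$ is already $B_A$-semi-invariant and so cannot vanish or have a pole there), and only afterwards reapplies Lemma~\ref{lemma:movedareorthogonal} and Corollary~\ref{cor:fan}(\ref{cor:fan:sigma}) \emph{to $X_A$} to get the join structure. For $|\E|>1$ the paper proceeds by induction along the chain $\theta_{G,X}(G)\subseteq\Autz(X,\partial_GX\setminus\{E_1\})\subseteq\cdots\subseteq\Autz(X,\D)$, using Corollary~\ref{cor:partialt} (already established for $|\E|=1$) at each step. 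Your all-at-once approach is not wrong in spirit, but to make it rigorous you would still need the map $\pi_A$ and essentially the same ingredients; the induction simply keeps each step clean.
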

\begin{proof}
The group $G$ acts on $X_A$ via the map $\theta$, and it is enough to show $X_A$ is a toroidal embedding of $G/H$ with fan $\F_G(X)$. Let us first prove this fact with the assumption that $|\E|=1$, say $\E= \{E\}$. 

In addition to the $G$-equivariant map $\pi\colon X\to \XX$ we also have by construction an $A$-equivariant map $\pi_A\colon X_A\to \XX$ extending the projection $\pi^{H_A,\widehat H_A}\colon A/H_A\to A/\widehat H_A$. The $A$-colors and the $G$-colors of $\XX$ coincide, and this implies the same for $X_A$: indeed any $A$-color (resp.\ $G$-color) of $X_A$ is of the form $\pi_A^{-1}(D)$ for an $A$-color (resp.\  $G$-color) $D$ of $\XX$.

If $D\subset \XX$ is a color such that $\pi_A^{-1}(D)$ contains a $G$-orbit $Y\subset X_A$, then $D$ contains the $G$-orbit $\pi_A(Y)$: this is absurd because $\XX$ is a toroidal $G$-variety. In other words $X_A$ is a toroidal $G$-variety.

Next, we claim that $A/H_A$ is a $G$-embedding of $G/H$ whose fan contains $c_{X,E}$ as its unique non-trivial cone. Part (\ref{lemma:restrictions:openorbit}) of Lemma \ref{lemma:restrictions} implies that $A/\widehat H_A$ is an elementary embedding of $G/\widehat H$, with orbits $G/\widehat H$, $\pi(E)\cap A/\widehat H_A$, and fan containing $c_{\XX,\pi(E)}$ as its unique non-trivial cone. The open subset $G/H\subset A/H_A (\subseteq X_A)$ is equal to $\pi_A^{-1}(G/\widehat H)$, and the $G$-stable closed subset $E'=(A/H_A)\smallsetminus(G/H)$ is equal to $\pi_A^{-1}(\pi(E))\cap A/H_A$.

Consider the $G$-invariant prime divisors contained in $E'$: they are neither colors nor $A$-stable prime divisors. We claim that there is only one of them, with associated convex cone $c_{X,E}$. Then $E'$ itself is a $G$-stable prime divisor, because we already proved that $A/H_A$ is a toroidal embedding of $G/H$.

To show the claim, consider $f\in\CC(G/H)^{(B)}_\lambda$ with $\lambda\in\Lambda$. By Lemma \ref{lemma:restrictions2} we have that $f$ is also a $B_A$-eigenvector, therefore its divisor $\div(f)$ on $A/H_A$ has components which are either colors or $A$-stable prime divisors. It follows that $\rho_{G,A/H_A}(F)\in\lambda^\perp$ for all $\lambda\in\Lambda$ and all $G$-stable prime divisor $F\subseteq E'$. Since $c_{X,E}= \Lambda^\perp\cap\V_G(G/H)$, we deduce that there is only one such $F$ and it satisfies $\rho_{G,A/H_A}(F)\in c_{X,E}$: the claim above follows.

Now Lemma~\ref{lemma:movedareorthogonal}, Lemma~\ref{lemma:restrictions2} and Corollary~\ref{cor:fan} part (\ref{cor:fan:sigma}) hold also if we replace $X$ with $X_A$ and $\D$ with the set $(\partial_G X_A)\smallsetminus\{ E' \}$. From Corollary~\ref{cor:fan} part (\ref{cor:fan:sigma}) we deduce that $\F_G(X_A)$ is the join of $\F_G^{\sigma_{\pi(E)}}(X_A)$ and $c_{X,E}$. From Lemma~\ref{lemma:restrictions2} we deduce that every $G$-stable prime divisor $D$ of $X_A$ such that $\rho_{G,X_A}(D)\in\sigma_{\pi(E)}^\perp$ is also $A$-stable, hence each $G$-orbit $Y\subseteq X_A$ such that $c_{X_A,Y}\subset\sigma_{\pi(E)}^\perp$ is also an $A$-orbit.

In other words $\F_G^{\sigma_{\pi(E)}}(X_A)$ and $\F_G^{\sigma_{\pi(E)}}(X)$ have the same image under $s_*$, which implies that they are equal. The theorem in the case $|E|=1$ follows.

If $|\E|>1$, we consider the chain of groups
\[
\theta_{G,X}(G)\subseteq \Autz(X,\partial_G X\smallsetminus\{E_1\})\subseteq\Autz(X,\partial_G X\smallsetminus\{E_1,E_2\})\subseteq\ldots\subseteq\Autz(X,\D),
\]
where $\E=\{E_1,E_2,\ldots\}$, and proceed by induction on $|\E|$. Let $A_i\subseteq A_{i+1}$ be two consecutive groups of this chain: we may apply the first part of the proof, together with Corollary~\ref{cor:partialt} below (whose proof in the case $|\E|=1$ only depends on the case $|\E|=1$ of this theorem) to the $A_{i}$-variety $X$. We obtain the construction of an $A_{i+1}$-variety $X_{A_{i+1}}$, which is $A_i$-equivariantly isomorphic to $X$.
\end{proof}

\begin{corollary}\label{cor:nonlinear:image}
The image of $A$ in $\Autz(X)$ is equal to $\Autz(X,\D)$.
\end{corollary}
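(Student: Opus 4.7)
The plan is to verify equality of two connected subgroups of $\Autz(X)$. The inclusion $\theta_{A,X}(A)\subseteq \Autz(X,\D)$ is immediate, since $\D=\partial_A X$ by Corollary~\ref{cor:partialt}: the group $A$ stabilizes each of its own boundary divisors by construction.

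For the reverse inclusion I would apply the results of \S\ref{s:allGorbits} to the $A$-action. By Theorem~\ref{thm:nonlinear:lifts} and Corollary~\ref{cor:partialt}, $X$ is $A$-regular with boundary $\partial_A X=\D$, so the $A$-analog of the split exact sequence (\ref{eqn:partial}) applies and shows that $\Autz(X,\D)$ is reductive, with connected center $(\widehat H_A/H_A)^\circ$, and that $\pi$ induces a surjection $\Autz(X,\D)\to\Autz(\XX,\DD)$ whose Lie algebra kernel equals $\Lie\widehat H_A/\Lie H_A$.

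The task then reduces to checking two points about the subgroup $\theta_{A,X}(A)\subseteq \Autz(X,\D)$. First, its image in $\Autz(\XX,\DD)$ under the map induced by $\pi$ is everything: by $A$-equivariance of $\pi$ this image equals $\theta_{A,\XX}(A)=\Autz(\XX,\DD)$, thanks to Definition~\ref{def:Anl}, where $A'\subseteq A$ is introduced precisely as the universal cover of $\Autz(\XX,\DD)$. Second, $\theta_{A,X}(A)$ contains the connected center $(\widehat H_A/H_A)^\circ$ of $\Autz(X,\D)$: for this I would invoke the isomorphism $\widehat H_A/H_A\cong \widehat H/H$ from the proof of Proposition~\ref{proposition:H_A} and observe that both tori act on $X$ as the $S$-action of Lemma~\ref{lemma:Saction}; by the standing assumption at the start of \S\ref{s:relating} this action has been absorbed into $C$, so it lies inside $\theta_{C,X}(C)\subseteq \theta_{A,X}(A)$.

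Combining these two facts, the connected subgroup $\theta_{A,X}(A)$ contains the connected center of $\Autz(X,\D)$ and maps onto its quotient $\Autz(\XX,\DD)$, so it equals $\Autz(X,\D)$ by the structure theory of connected reductive groups. The main obstacle is the second point: one must verify that the right-translation action of $\widehat H_A/H_A$ on $A/H_A$ really does agree, under the identification of Proposition~\ref{proposition:H_A}, with the $S$-action that was built into $C$ during the reduction at the start of \S\ref{s:relating}, and is not merely isomorphic to it as an abstract torus.
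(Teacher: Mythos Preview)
Your approach is essentially the same as the paper's: apply the exact sequence~(\ref{eqn:partial}) to $X$ as an $A$-variety, and check that the image of $A$ covers both the quotient $\Autz(\XX,\DD)$ and the kernel $(\widehat H_A/H_A)^\circ$. The paper does not separately argue the inclusion $\theta_{A,X}(A)\subseteq\Autz(X,\D)$, and it handles the torus piece in one line by writing $(\widehat H_A/H_A)^\circ\cong(\widehat H/H)^\circ\subseteq C$, without dwelling on the compatibility question you flag as an obstacle.

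Two small points are missing from your write-up that the paper makes explicit. First, to apply~(\ref{eqn:partial}) in its $A$-version you need to know that the wonderful $A$-variety attached to $X$ is $\XX$ itself; this requires Corollary~\ref{cor:H_A} (that $\widehat H_A$ is the wonderful closure of $H_A$ in $A$), together with Lemma~\ref{lemma:restrictions}(\ref{lemma:restrictions:openorbit}) identifying the open $A$-orbit of $\XX$. Second, the right-hand term of the sequence is $\Autz(\XX,\partial_A\XX)$, and you need $\partial_A\XX=\DD$; the paper obtains this from the fact that $A$ moves every element of $\EE$ and stabilizes every element of $\DD$ on $\XX$. Once you insert these two justifications, your argument and the paper's coincide.
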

\begin{proof}
By construction $A$ moves each element of $\EE$ on $\XX$ and stabilizes all elements of $\DD$, hence $\DD=\partial_A\XX$.

Moreover $\widehat H_A$ is the wonderful closure of $H_A$, hence we can apply the exact sequence (\ref{eqn:partial}) to $X$ as an $A$-variety, mapping onto the wonderful $A$-variety $\XX$. Since the image of $A$ contains by construction both the universal cover of $\Autz(\XX,\DD)=\Autz(\XX,\partial_A\XX)$ and $(\widehat H_A/H_A)^\circ\cong(\widehat H/H)^\circ \subseteq C$, it follows that the image of $A$ contains $\Autz(X,\partial_AX)=\Autz(X, \D)$.
\end{proof}

We recall that our assumptions on $\D$ include the fact that all elements of $\E=\partial X\smallsetminus \D$ are not stable under the action of $\Autz(X)$. From the above theorem, together with Corollary~\ref{cor:nonlinear:image} and the definition of $\F_A$, it follows that $\partial_AX=\D$, in other words all elements of $\E$ are also not stable under the action of $\Autz(X,\D)$. We end this section with the following additional observation that will be useful later.

\begin{corollary}\label{cor:partialt}
We have $(\partial_A X)^\ell=(\partial_G X)^\ell$.
\end{corollary}
\begin{proof}
This is obvious from the definition of $\F_A$.
\end{proof}

\section{Abelian case}\label{s:abelian}
In this section we will assume that $G=C$ is an algebraic torus, $X$ as usual a complete $G$-regular variety, and $\D\subseteq \partial_G X$ any subset. Hence $X$ is a toric variety under the acton of a quotient of $G$. Since $G$ is its own Borel subgroup, $X$ has no $G$-color. In this setting the study of $\Autz(X)$ is simplified by the fact that, for all $D\in\partial_G X$, the $G$-module $H^0(X, \OO_X(D))$ splits into the sum of $1$-dimensional $G$-submodules.

We report some results on $\Autz(X)$ from \cite{De70}, starting with the following proposition on one-dimensional subgroups of $\Autz(X)$. A standard reference on this subject is also \cite{Oda88}.

Let us recall that any element of $\Hom_\ZZ(\Lambda_G(X),\ZZ)=\Hom_\ZZ(\Chi(\theta_{G,X}(G)),\ZZ)$ is canonically associated with a one-parameter subgroup $\GG_m\to G$. This induces a tangent vector field on $X$ via the action of $G$, namely the image, via the differential of $\GG_m\to \Autz(X)$, of the derivation of $\CC[\GG_m]\cong \CC[z,z^{-1}]$ defined by $z\mapsto z$.

\begin{proposition}[{\cite[Section 3.4]{Oda88}}] \label{prop:oda}
Let $\alpha\in\Lambda_G(X)$ be non-zero. The divisor $X(\alpha)$ exists if and only if there exists a one-dimensional unipotent subgroup $U_\alpha\subset \Autz(X)$ such that $G$ normalizes $U_\alpha$, and the action of $G$ by conjugation on $U_\alpha\cong \CC$ is linear with weight $\alpha$. If this is the case, then the following holds.
\begin{enumerate}
\item The subgroup $U_\alpha\subset \Autz(X)$ is unique, and $X(\alpha)$ is the unique $G$-stable prime divisor of $X$ not stable under the action of $U_\alpha$.
\item Let $c_\alpha$ be the one dimensional cone of the fan $\F_G(X)$ associated with $X(\alpha)$, let $X_0$ be a $G$-stable affine open subset of $X$ and let $c$ be the cone in $\F_G(X)$ corresponding to the closed $G$-orbit of $X_0$. If $c_\alpha$ is an edge of $c$, then $X_0$ is $G\ltimes U_\alpha$-stable.
\item Fix an isomorphism $u_\alpha\colon\CC\to U_\alpha\subseteq\Autz(X)$ and consider the differential $du_\alpha\colon \Lie\CC\to \Lie\Autz(X)$. Then $du_\alpha(d/d\xi)= f_\alpha \delta_\alpha$, where $d/d\xi$ is the derivation with respect to the coordinate $\xi$ of $\CC$, the function $f_\alpha$ is in $\CC(X)^{(B)}_\alpha$, and $\delta_\alpha$ is the tangent vector field on $X$ induced by $\rho_{G,X}(X(\alpha))$, seen as an element of $\Hom_\ZZ(\Chi(\theta_{G,X}(G)),\ZZ)$.
\item More explicitly, if $f_\beta \in \CC(X)^{(G)}_\beta$ for some $\beta\in\Lambda_G(X)$ and $\xi \in U_\alpha\cong\CC$, then the rational function $z\mapsto f_\beta(\xi \cdot z)$ on $X$ is given by the formula
\begin{equation}	\label{eqn:oda-azione}
f_\beta(\xi \cdot z) = f_\beta(z) (1+\xi f_\alpha(z))^{\langle \rho_{G,X}(X(\alpha)), \beta \rangle},
\end{equation}
(all the $G$-semiinvariant rational functions are normalized here in such a way that they take value $1$ on the same element in the open $G$-orbit of $X$).
\end{enumerate}
\end{proposition}

\begin{remark}
If $X(\alpha)$ exists for some $\alpha$ then we have $\langle \rho_{G,X}(X(\alpha)), \alpha\rangle = -1$ and $\langle \rho_{G,X}(D), \alpha\rangle \geq 0$ for all $D\in\partial_GX$ different from $X(\alpha)$. However, the difference in signs from our discussion (in particular in Proposition~\ref{prop:oda}) and \cite[Section 3.4]{Oda88} is only apparent: a character $\lambda\in\Chi(\theta_{G,X}(G))$ is indeed a rational function on $X$ and a $G$-eigenvector, but of $G$-eigenvalue $-\lambda$.
\end{remark}

Notice that the assignment $\alpha\mapsto X(\alpha)$ might be not injective. Also, if both $X(\alpha)$ and $X(-\alpha)$ exist, then $\rho_{G,X}(X(\alpha))$ is not necessarily $-\rho_{G,X}(X(-\alpha))$. However, $X(\alpha)$ and $X(-\alpha)$ are the only $G$-stable prime divisors whose images through $\rho_{G,X}$ are non-zero on $\alpha$.

\begin{definition}
Let $\D\subseteq \partial_G X$ any subset, and define $\Phi=\Phi(X,\D)$ to be the maximal set of roots of $X$ such that:
\begin{enumerate}
\item if $\alpha\in\Phi(X,\D)$ then also $-\alpha\in\Phi(X,\D)$;
\item if $\alpha\in\Phi(X,\D)$ then $X(\alpha)\in\E=\partial X\smallsetminus \D$.
\end{enumerate}
\end{definition}

The following result is a consequence of \cite[Demazure's Structure Theorem, Section 3.4]{Oda88}.

\begin{theorem}
The subgroup of $\Autz(X)$ generated by $\theta_{G,X}(G)$ and $U_\alpha$ for all $\alpha\in\Phi(X,\D)$ has $\Phi(X,\D)$ as root system with respect to its maximal torus $\theta_{G,X}(G)$, and is a Levi subgroup of $\Autz(X,\D)$.
\end{theorem}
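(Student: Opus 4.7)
The plan is to combine Oda's Demazure Structure Theorem~\cite[\S3.4]{Oda88} with the exact sequence~(\ref{eqn:exact}) to obtain the root-space decomposition of $\Autz(X,\D)$ under the adjoint action of its maximal torus $\theta_{G,X}(G)$, and then identify the Levi subgroup containing that torus.

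First I would determine which root subgroups $U_\alpha$ of $\Autz(X)$ lie in $\Autz(X,\D)$. By Definition~\ref{def:roots}, the divisor $X(\alpha)$ is the unique element of $\partial_G X$ not stabilized by $U_\alpha$, so $U_\alpha \subseteq \Autz(X,\D)$ if and only if $X(\alpha) \in \E$. Since $G$ is abelian, each module $H^0(X,\OO_X(D))/\CC$ for $D \in \partial_G X$ decomposes as a direct sum of one-dimensional $\theta_{G,X}(G)$-weight spaces, one for each Demazure root $\alpha$ with $X(\alpha)=D$; combining this with~(\ref{eqn:exact}) I would obtain
\[
\Lie\Autz(X,\D) = \Lie\theta_{G,X}(G) \oplus \bigoplus_{\alpha\,:\,X(\alpha)\in\E} \Lie U_\alpha.
\]
Since $\Autz(X,\D)$ is connected, it is generated by $\theta_{G,X}(G)$ together with the $U_\alpha$ for which $X(\alpha) \in \E$.

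Next I would verify that the subgroup $H$ generated by $\theta_{G,X}(G)$ and $\{U_\alpha : \alpha \in \Phi(X,\D)\}$ is connected reductive with root system $\Phi(X,\D)$ relative to the maximal torus $\theta_{G,X}(G)$. The symmetry of $\Phi$ under $\alpha \mapsto -\alpha$ ensures that for every $\alpha \in \Phi$ both $U_\alpha$ and $U_{-\alpha}$ lie in $H$, and Oda's Demazure Theorem provides an $\mathfrak{sl}_2$-triple involving these two root subgroups together with a one-dimensional subtorus of $\theta_{G,X}(G)$. Consequently $\Lie H = \Lie\theta_{G,X}(G) \oplus \bigoplus_{\alpha\in\Phi}\Lie U_\alpha$ is reductive.

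The main obstacle is the final step: showing that $H$ actually coincides with the Levi subgroup of $\Autz(X,\D)$ containing $\theta_{G,X}(G)$. Here I would invoke the fact that, in any connected linear algebraic group, the Levi subgroup containing a prescribed maximal torus is unique, and its roots form precisely the symmetric subset of the weights of that torus on the whole Lie algebra. Applied to $\Autz(X,\D)$, whose weights are the $\alpha$ with $X(\alpha)\in\E$, the symmetric part is exactly $\Phi(X,\D)$, giving $\Lie L \subseteq \Lie H$. Conversely, for each $\alpha \in \Phi$ the reductive subgroup $\langle\theta_{G,X}(G),U_\alpha,U_{-\alpha}\rangle$ contains the maximal torus $\theta_{G,X}(G)$; by Mostow's theorem and the uniqueness of the Levi through this torus, it lies in $L$. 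Hence $H \subseteq L$, and the two groups agree.
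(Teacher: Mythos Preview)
The paper's own proof is a one-line citation of Demazure's structure theorem as presented in \cite[\S3.4]{Oda88}; no further argument is given. Your proposal attempts to unpack this, and the first three paragraphs are essentially correct and compatible with that citation. The gap is in your final step.

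The principle you invoke---that in any connected linear algebraic group the roots of the Levi subgroup through a given maximal torus $T$ are exactly the symmetric part of the $T$-weights on the Lie algebra---is false. Take $G = T \ltimes H$ where $T \cong \GG_m$ and $H$ is the three-dimensional Heisenberg group, with $T$ acting by weights $\alpha$, $-\alpha$, $0$ on a basis $x,y,z$ of $\Lie H$ satisfying $[x,y]=z$. Then $T$ is a maximal torus of $G$, the unipotent radical is $H$, and the Levi through $T$ is $T$ itself; yet the nonzero $T$-weights on $\Lie G$ are $\{\alpha,-\alpha\}$, a symmetric set. One-dimensionality of weight spaces does not rescue the claim, since it holds in this example too.

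What actually makes the theorem work is a specific feature of the Demazure roots of a complete toric variety, established in the structure theorem you are trying to reprove: the one-parameter subgroups $U_\alpha$ for which $-\alpha$ is \emph{not} a root (or, in the relative setting, for which $X(\alpha)\in\E$ but $\alpha\notin\Phi(X,\D)$) generate a connected \emph{normal} unipotent subgroup. This normality is the non-formal content; once it is known, the quotient by this subgroup is the reductive group with root system $\Phi(X,\D)$, and your $H$ maps isomorphically onto it. So rather than appealing to a general Levi principle, you should either invoke Demazure's theorem directly for this normality statement (as the paper does), or reproduce its proof by analysing the commutator relations among the $U_\alpha$ in \cite[Proposition~3.14]{Oda88}.
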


\begin{example}\label{ex:Pn}
Let $X=\PP^n$ with $n\geq 2$, under the linear action of the group $G$ of $(n+1)\times (n+1)$ invertible diagonal matrices. Then $\partial_GX$ has $n+1$ elements $D_1,\ldots,D_{n+1}$ given resp.\ by the vanishing of the homogeneous coordinates $x_1,\ldots,x_{n+1}$. The lattice $\Lambda_G(X)$ is the root lattice of the group $\GL(n+1)\supset G$; the simple roots $\alpha_1,\ldots,\alpha_n$ (with respect to the Borel subgroup of upper triangular matrices and its maximal torus $G$, and in the usual ordering) are the $G$-eigenvalues of the $G$-semiinvariant rational functions $x_2/x_1,x_3/x_2,\ldots,x_{n+1}/x_n$. In the basis $\alpha_1^*,\ldots,\alpha_n^*$ of $\N_G(X)$ dual to $\alpha_1,\ldots,\alpha_n$, the $n+1$ one-dimensional cones in the fan $\F_G(X)$ are generated by $\rho(D_1)=-\alpha_1^*,\rho(D_2)=\alpha_1^*-\alpha_2^*,\ldots,\rho(D_n)=\alpha_{n-1}^*-\alpha_n^*,\rho(D_{n+1})=\alpha_n^*$.

The set of all roots of $X$ as a toric variety is the set of roots of $\GL(n+1)$, and the roots $\alpha$ such that $X(\alpha)=D_{n+1}$ are the negative roots with $\alpha_n$ in their support. Correspondingly, if $\D=\{D_{n+1}\}$ then $\Autz(X,\D)$ is a maximal parabolic subgroup of $\PGL(n+1)$, its Levi subgroup $A$ containing the image of $G$ is the subgroup whose roots don't have $\alpha_n$ in their support.

Notice that $\Autz(X,\D)$ acts transitively on $\PP^n\smallsetminus D_{n+1}\cong \CC^n$. A one-dimensional ``root subgroup'' $U_\alpha$ contained in the unipotent radical of $\Autz(X,\D)$ acts as the translations along one of the coordinates, according to formula (\ref{eqn:oda-azione}).
\end{example}

\begin{definition}
Define $A=A(X,\D)$ the subgroup of $\Autz(X)$ generated by $\theta_{G,X}(G)$ and $U_\alpha$ for all $\alpha\in\Phi(X,\D)$. Let us also choose a Borel subgroup $B_A\subseteq A$ containing $G$ and, consequently, a subdivision of $\Phi$ into positive and negative roots, resp.\ denoted by $\Phi_+=\Phi_+(X,\D)$ and $\Phi_-=\Phi_-(X,\D)$, and denote by $\Psi=\Psi(X,\D)$ the basis of positive roots.
\end{definition}

Since $B_A$ is generated by $\theta_{G,X}(G)$ together with the subgroups $U_{\alpha}$ for all $\alpha\in\Psi$, we have that any $G$-stable prime divisor which doesn't appear as $X(\alpha)$ for some $\alpha\in\Psi$ is $B_A$-stable. In other words
\begin{equation}\label{eqn:psi+}
\left\{ X(\alpha)\;\middle\vert\;\alpha\in\Phi_+ \right\} = \left\{ X(\alpha)\;\middle\vert\;\alpha\in\Psi\right\},
\end{equation}
and for the same reason (replacing $\Psi$ with $-\Psi$)
\begin{equation}\label{eqn:psi-}
\left\{ X(\alpha)\;\middle\vert\;\alpha\in\Phi_- \right\} = \left\{ X(\alpha)\;\middle\vert\;\alpha\in(-\Psi)\right\}.
\end{equation}

\begin{lemma}\label{lemma:triple}
Let $\alpha,\beta\in\Phi$ be different, and suppose that $X(\alpha) = X(\beta)$. Then $\gamma= \alpha - \beta$ and $-\gamma$ are also in $\Phi$, with $X(\gamma)=X(-\beta)$ and $X(-\gamma)=X(-\alpha)$.
\end{lemma}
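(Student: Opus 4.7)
The plan is to exploit the rigidity forced by the hypothesis $\alpha\in\Phi$: both $\alpha$ and $-\alpha$ are roots of $X$ in the sense of Definition~\ref{def:roots}, which severely restricts how $\alpha$ pairs with the elements of $\rho_{G,X}(\partial_GX)$.

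First I would record the following ``symmetric'' spectral description: for any $\alpha\in\Phi$ and any $D\in\partial_GX$, the integer $\langle\rho_{G,X}(D),\alpha\rangle$ equals $-1$ if $D=X(\alpha)$, equals $+1$ if $D=X(-\alpha)$, and vanishes otherwise; moreover $X(\alpha)\neq X(-\alpha)$. This follows from the Remark after Definition~\ref{def:roots} applied in turn to $\alpha$ and to $-\alpha$: the first gives $\langle\rho_{G,X}(D),\alpha\rangle\geq 0$ for $D\neq X(\alpha)$, the second gives $\langle\rho_{G,X}(D),\alpha\rangle\leq 0$ for $D\neq X(-\alpha)$, and $X(\alpha)$ cannot equal $X(-\alpha)$, as otherwise the pairing at this common divisor would have to be both $-1$ and $+1$.

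Applying the same description to $\beta$ and using $X(\beta)=X(\alpha)$, I would next show that $X(-\alpha)\neq X(-\beta)$. If these coincided, then $\langle\rho_{G,X}(D),\gamma\rangle=\langle\rho_{G,X}(D),\alpha\rangle-\langle\rho_{G,X}(D),\beta\rangle$ would vanish for every $D\in\partial_GX$; but $X$ is complete, so the vectors $\rho_{G,X}(D)$ span $\N_G(X)$, and this forces $\gamma=0$, contradicting $\alpha\neq\beta$. Once this case is ruled out, bilinearity combined with the spectral descriptions of $\alpha$ and $\beta$ gives $\langle\rho_{G,X}(D),\gamma\rangle$ equal to $-1$ at $D=X(-\beta)$, to $+1$ at $D=X(-\alpha)$, and to $0$ on every other $D\in\partial_GX$. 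By the Remark after Definition~\ref{def:roots} this identifies $\gamma$ as a root of $X$ with $X(\gamma)=X(-\beta)$; interchanging $\alpha$ and $\beta$ gives that $-\gamma$ is a root with $X(-\gamma)=X(-\alpha)$. Finally $\gamma,-\gamma\in\Phi$ by the maximality built into the definition of $\Phi$, since $X(\gamma)=X(-\beta)\in\E$ and $X(-\gamma)=X(-\alpha)\in\E$ by condition (2) applied to $-\beta,-\alpha\in\Phi$.

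I do not foresee a substantial obstacle: the entire argument is bookkeeping of signs in the Demazure criterion, the only delicate moment being the exclusion of the case $X(-\alpha)=X(-\beta)$, which is dispensed with by completeness of $X$.
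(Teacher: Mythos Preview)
Your proof is correct and follows essentially the same approach as the paper's. The paper also argues by first excluding $X(-\alpha)=X(-\beta)$ via completeness (phrased as $\supp\F_G(X)\subseteq(\alpha-\beta)^\perp$ being impossible), and then declares the remaining identification of $X(\pm\gamma)$ ``obvious''; you simply make that last step explicit through your symmetric spectral description.
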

\begin{proof}
Suppose that $X(-\alpha)=X(-\beta)$. Then $\alpha-\beta$ is zero on $\rho_{G,X}(X(\pm\alpha))$ and on $\rho_{G,X}(X(\pm\beta))$. On the other hand, if a $G$-stable prime divisor $D\subset X$ is not of the form $X(\pm\alpha)$ nor $X(\pm\beta)$, then both $\alpha$ and $\beta$ are zero on $\rho_{G,X}(D)$. It follows that $\supp\F_G(X)$ is contained in the hyperplane $(\alpha-\beta)^\perp$ of $\N_G(X)$, which contradicts the completeness of $X$. Therefore $X(-\alpha)\neq X(-\beta)$, i.e.\ $X(\alpha)$, $X(-\alpha)$ and $X(-\beta)$ are three different prime divisors. The statement of the lemma is now obvious.
\end{proof}

\begin{lemma}\label{lemma:independent}
The matrix
\begin{equation}\label{eqn:mat}
\left(\langle \rho_{G,X}(X(\alpha)), \alpha'\rangle\right)_{\alpha,\alpha'\in\Psi}
\end{equation}
is non-degenerate. In particular, the elements $\rho_{G,X}(X(\alpha))$, for $\alpha$ varying in $\Psi$, are linearly independent.
\end{lemma}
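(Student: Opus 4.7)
I read the notation $(\langle \rho_{G,X}(X(\alpha)),\alpha\rangle)_{\alpha\in\Psi}$ in the statement as shorthand for the square matrix $M=(M_{\alpha,\beta})_{\alpha,\beta\in\Psi}$ with $M_{\alpha,\beta}=\langle \rho_{G,X}(X(\alpha)),\beta\rangle$; its non-degeneracy immediately gives the linear independence of the vectors $\rho_{G,X}(X(\alpha))$ (for varying $\alpha\in\Psi$) in $\N_G(X)$, which is the ``in particular'' part of the lemma. My plan is thus to establish $\det M\neq 0$.

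First I compute the sign pattern of $M$. By the defining property of $X(\alpha)$, any $B$-eigenvector $f_\alpha\in H^0(X,\OO_X(X(\alpha)))^{(B)}_\alpha$ has a simple pole along $X(\alpha)$ and is regular on every other $G$-stable prime divisor; hence $M_{\alpha,\alpha}=\nu_{X(\alpha)}(f_\alpha)=-1$, and for any $D\in\partial_GX$ with $D\neq X(\beta)$ one has $\langle \rho_{G,X}(D),\beta\rangle\geq 0$. For distinct $\alpha,\beta\in\Psi$, the fact that $\alpha-\beta\notin\Phi$ (a basic property of a base of a root system) combined with the contrapositive of Lemma~\ref{lemma:triple} yields $X(\alpha)\neq X(\beta)$, and therefore $M_{\alpha,\beta}\geq 0$.

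To conclude that $M$ is non-degenerate I aim to reorder $\Psi$ so that $M$ becomes upper triangular, forcing $\det M=(-1)^{|\Psi|}$. The combinatorial mechanism is again Lemma~\ref{lemma:triple}: if both $X(\alpha)=X(-\beta)$ and $X(\beta)=X(-\alpha)$ held for distinct $\alpha,\beta\in\Psi$, then the lemma applied to these equalities (each gives $\alpha+\beta\in\Phi$ and an identification of the form $X(\alpha+\beta)=X(\beta)$ or $X(\alpha+\beta)=X(\alpha)$) would propagate to force $X(\alpha)=X(\beta)$, contradicting the previous paragraph. This length-two asymmetry, combined with the height function on $\Phi$, is designed to allow the ``positivity digraph'' on $\Psi$ induced by the off-diagonal entries of $M$ to be topologically sorted; in the resulting ordering of $\Psi$ the matrix $M$ is upper triangular with diagonal $-1$, and its determinant is $(-1)^{|\Psi|}\neq 0$.

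The main obstacle is that $M_{\alpha,\beta}>0$ a priori only means $\rho_{G,X}(X(\alpha))$ lies in the open positive $\beta$-halfspace and need not arise from a direct divisor equality $X(\alpha)=X(-\beta)$ on which Lemma~\ref{lemma:triple} is easy to apply. To nevertheless run the acyclicity argument, I plan to exploit the identity
\[
\rho_{G,X}(X(-\alpha))-\rho_{G,X}(X(\alpha))=\alpha^\vee
\]
(valid whenever both Demazure roots $\pm\alpha$ satisfy the hypothesis of Definition~\ref{def:roots}) in order to transfer positivity conditions into divisor equalities that the lemma can process, and then to use the linear independence of the simple coroots $\alpha^\vee$ to rule out longer cycles in the positivity digraph and complete the triangularization.
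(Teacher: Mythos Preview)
Your sign analysis is correct, but the ``main obstacle'' you flag is illusory. Since $\Phi$ is closed under negation by definition, for every $\beta\in\Psi$ the root $-\beta$ also lies in $\Phi$, so $X(-\beta)$ exists and is the \emph{unique} boundary divisor on which $\beta$ is strictly positive: any other $D$ satisfies simultaneously $\langle\rho_{G,X}(D),\beta\rangle\ge 0$ and $\langle\rho_{G,X}(D),-\beta\rangle\ge 0$. Thus $M_{\alpha,\beta}>0$ is literally equivalent to the divisor equality $X(\alpha)=X(-\beta)$, and in fact $M_{\alpha,\beta}\in\{0,1\}$ for $\alpha\neq\beta$. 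The coroot identity you invoke is therefore unnecessary, and in any case is not established anywhere in the paper.

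Granting this, your triangularization strategy can be salvaged, but not by the vague ``height function plus coroot independence'' mechanism you sketch. What actually works is to iterate Lemma~\ref{lemma:triple} along a hypothetical directed cycle $\alpha_1\to\alpha_2\to\cdots\to\alpha_k\to\alpha_1$ (meaning $X(\alpha_i)=X(-\alpha_{i+1})$ cyclically): one shows inductively that $\alpha_1+\cdots+\alpha_j\in\Phi$ with $X(\alpha_1+\cdots+\alpha_j)=X(\alpha_j)$, and then, running the same induction starting from $\alpha_2$ once fully around the cycle, obtains $X(\alpha_1+\cdots+\alpha_k)=X(\alpha_1)$ as well, whence $X(\alpha_1)=X(\alpha_k)$, a contradiction. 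Your two-cycle argument is exactly the case $k=2$.

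The paper's proof is different and shorter: assuming a nontrivial relation $\sum_{\alpha\in\Psi'}a_\alpha\,\rho_{G,X}(X(\alpha))\in\Psi^\perp$, it extracts a fixed-point-free bijection $\tau\colon\Psi'\to\Psi'$ with $X(\alpha)=X(-\tau(\alpha))$, and then shows that $\gamma=\sum_{\alpha\in\Psi'}\alpha$ vanishes on $\rho_{G,X}(D)$ for every $G$-stable prime divisor $D$, contradicting the completeness of $X$. This avoids any cycle-chasing and appeals to completeness directly at the end, whereas your approach (once repaired) hides completeness inside the repeated invocations of Lemma~\ref{lemma:triple}.
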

\begin{proof}
Thanks to Lemma~\ref{lemma:triple}, the elements $\rho_{G,X}(X(\alpha))$ for $\alpha\in\Psi$ are pairwise distinct. If the matrix (\ref{eqn:mat}) is degenerate, there exists a linear combination
\begin{equation}\label{eqn:dep}
\sum_{\alpha\in\Psi'} a_\alpha \rho_{G,X}(X(\alpha)) \in\Psi^\perp
\end{equation}
where $\varnothing\neq\Psi'\subseteq \Psi$ and $a_\alpha\neq0$ for all $\alpha\in\Psi'$. Applying $\langle\alpha,-\rangle$ for a fixed $\alpha\in\Psi'$ to the linear combination (\ref{eqn:dep}), we see that both $\rho_{G,X}(X(\alpha))$ and $\rho_{G,X}(X(-\alpha))$ must appear in the sum. Indeed, the former appears, and the latter is the only other possible summand that is nonzero on $\alpha$. The elements $\rho_{G,X}(X(-\alpha))$ for $\alpha\in\Psi$ are distinct, thanks to the first part of the proof applied to the set of simple roots $-\Psi$.

Hence each summand in (\ref{eqn:dep}) can also be rewritten as $a_\alpha \rho_{G,X}(X(-\tau(\alpha)))$ where $\tau\colon\Psi'\to\Psi'$ is a bijection. We also know that $\rho_{G,X}(X(\alpha))\neq\rho_{G,X}(X(-\alpha))$, therefore $\tau$ has no fixed points. Now consider
\[
\gamma = \sum_{\alpha\in\Psi'} \alpha.
\]
Its value on $\rho_{G,X}(D)$ is zero, if $D\subset X$ is a $G$-stable prime divisor not of the form $X(\pm\alpha)$ for some $\alpha\in\Psi'$. On the other hand, for a fixed $\alpha\in\Psi'$ we have that $X(\alpha)=X(-\tau(\alpha))$, but $X(\alpha)\neq X(\beta)$ for all $\beta\in\Psi$ different from $\alpha$, and $X(\alpha)\neq X(-\beta)$ for any $\beta\in\Psi$ different from $\tau(\alpha)$. Therefore
\[
\begin{array}{rcl}
\langle \rho_{G,X}(X(\alpha)), \gamma\rangle & = & \langle \rho_{G,X}(X(\alpha)), \alpha\rangle + \langle\rho_{G,X}(X(\alpha)),\tau(\alpha)\rangle + \\ &  & + \left\langle \rho_{G,X}(X(\alpha)), {\displaystyle \sum_{\beta\in\Psi',\beta\neq\alpha,\tau(\alpha)}\beta} \right\rangle \\ & = & -1+1+0=0.
\end{array}
\]
We obtain that $\supp\F_G(X)$ is contained in the hyperplane $\gamma^\perp$, which is absurd because $X$ is complete.
\end{proof}

\begin{proposition}\label{prop:abelian}
As an $A$-variety, $X$ is spherical (not necessarily toroidal). The set of its $A$-stable prime divisors is
\[
\partial_AX=\partial_G X\smallsetminus \{ X(\alpha)\;\vert\; \alpha\in\Phi \},
\]
and these are exactly the $G$-stable prime divisors $D$ such that $\rho_{G,X}(D)\in\Psi^\perp$. Given the identification $\Chi(\theta_{G,X}(G))=\Chi(B_A)$, we have an inclusion
\[
\iota\colon \Lambda_A(X) \to \Lambda_G(X)
\]
whose image is the sublattice
\begin{equation}\label{eqn:image}
\left\{ \rho_{G,X}(X(\alpha))\;\middle\vert\;\alpha\in\Psi \right\}^\perp \subseteq \Lambda(X).
\end{equation}
The restriction map $\iota^*\colon\N_G(X)\to \N_A(X)$ induces an isomorphism
\[
\iota^*|_{\Psi^\perp}\colon \Psi^\perp \stackrel{\cong}{\to} \N_A(X).
\]
For any $B_A$-stable prime divisor $D\subset X$ we have $\rho_{A,X}(D) = \iota^*\rho_{G,X}(D)$, and the set of $A$-colors of $X$ is the following:
\[
\Delta_A(X) = \left\{ X(\alpha) \;\middle\vert\; \alpha\in(-\Psi) \right\} \smallsetminus \left\{ X(\alpha) \;\middle\vert\; \alpha\in\Psi \right\}. 
\]
The set of simple roots associated to $\P_A(X)$ is
\[
\{ \alpha\in \Psi \;|\; X(-\alpha)\notin \Delta_A(X)\}.
\]
Finally, let $\alpha\in\Psi$ with $X(-\alpha)\in\Delta_A(X)$. For all $\beta\in\Phi_+$ different from $\alpha$, we have $X(-\alpha)\neq X(\beta)$ and $X(\alpha)\neq X(\beta)$. In particular, if in addition $\beta\in\Psi$, we also have $\rho_{G,X}(X(-\alpha))\in \beta^\perp$.
\end{proposition}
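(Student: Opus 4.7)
The plan is to check each claim in sequence, using the Remark after Definition~\ref{def:roots} (that $\langle\rho_{G,X}(D),\alpha\rangle$ is non-zero only for $D\in\{X(\pm\alpha)\}$), the identities~(\ref{eqn:psi+})--(\ref{eqn:psi-}), and Lemmas~\ref{lemma:triple} and~\ref{lemma:independent}. Sphericality under $A$ is immediate: $B_A\supseteq\theta_{G,X}(G)$, and the latter already has a dense orbit on the toric variety $X$. A $G$-stable prime divisor $D$ is $A$-stable iff it is $U_\alpha$-stable for every $\alpha\in\Phi$, iff $D\notin\{X(\alpha):\alpha\in\Phi\}$; by~(\ref{eqn:psi+}),~(\ref{eqn:psi-}) and the Remark this is equivalent to $\rho_{G,X}(D)\in\Psi^\perp$, giving $\partial_A X$.

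For the lattice, $\iota$ is the obvious inclusion of character lattices, and $\chi\in\Lambda_G(X)$ lies in its image iff $f_\chi$ is $B_A$-semi-invariant. Since $X$ has no $G$-color (as $B=G$), the $B$-stable prime divisors are exactly $\partial_G X$, and among these the ones failing to be $B_A$-stable are the $X(\alpha)$, $\alpha\in\Psi$; hence the condition becomes $\langle\rho_{G,X}(X(\alpha)),\chi\rangle=0$ for every $\alpha\in\Psi$, which is~(\ref{eqn:image}). The dual $\iota^*$ is surjective with kernel the $\QQ$-span of $\{\rho_{G,X}(X(\alpha)):\alpha\in\Psi\}$; by Lemma~\ref{lemma:independent} this kernel meets $\Psi^\perp$ trivially, and a dimension count yields the isomorphism $\iota^*|_{\Psi^\perp}\colon\Psi^\perp\stackrel{\sim}{\to}\N_A(X)$. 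The formula $\rho_{A,X}(D)=\iota^*\rho_{G,X}(D)$ is direct from the definition, both sides pairing $\chi$ with $\nu_D(f_\chi)$. Since every $A$-color is $G$-stable, $\Delta_A(X)$ consists of the $G$-stable divisors that are $B_A$-stable but not $A$-stable, which by the above descriptions is exactly $\{X(-\beta):\beta\in\Psi\}\setminus\{X(\beta):\beta\in\Psi\}$. The simple roots of $\P_A(X)$ are those $\alpha\in\Psi$ with $U_{-\alpha}\subseteq\P_A(X)$, iff $U_{-\alpha}$ stabilizes every $A$-color, iff (since $U_{-\alpha}$ moves only $X(-\alpha)$) $X(-\alpha)\notin\Delta_A(X)$.

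The main work is the final statement. Fix $\alpha\in\Psi$ with $X(-\alpha)\in\Delta_A(X)$ and $\beta\in\Phi_+\setminus\{\alpha\}$. By~(\ref{eqn:psi+}) $X(\beta)=X(\delta)$ for some $\delta\in\Psi$, and the definition of $\Delta_A(X)$ forbids $X(-\alpha)$ from equalling any such $X(\delta)$; hence $X(-\alpha)\neq X(\beta)$. If $X(\alpha)=X(\beta)$, Lemma~\ref{lemma:triple} yields $\gamma:=\alpha-\beta\in\Phi$ with $X(-\gamma)=X(-\alpha)$; writing $\beta=\sum c_i\alpha_i$ in simple roots rules out $\gamma\in\Phi_+$ (which would force $\beta=\alpha$), so $-\gamma=\beta-\alpha\in\Phi_+$, and~(\ref{eqn:psi+}) gives $X(-\alpha)=X(-\gamma)=X(\delta')$ for some $\delta'\in\Psi$, contradicting $X(-\alpha)\in\Delta_A(X)$.

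For the ``in particular'', the Remark reduces $\rho_{G,X}(X(-\alpha))\in\beta^\perp$ (with $\beta\in\Psi\setminus\{\alpha\}$) to $X(-\alpha)\notin\{X(\beta),X(-\beta)\}$; the first is already done. Were $X(-\alpha)=X(-\beta)$, then $X(-\beta)\in\Delta_A(X)$ too (same divisor), so the previous paragraph applies symmetrically with $\beta$ in place of $\alpha$, giving $X(\beta)\neq X(\sigma)$ for all $\sigma\in\Phi_+\setminus\{\beta\}$. Applying Lemma~\ref{lemma:triple} to $(-\alpha,-\beta)$ produces $\gamma=\beta-\alpha\in\Phi$ with $X(\gamma)=X(\beta)$ and $X(-\gamma)=X(\alpha)$: if $\gamma\in\Phi_+$ then $\gamma\neq\beta$ and $X(\beta)=X(\gamma)$ contradicts the result for $\beta$; if $\gamma\in\Phi_-$ then $\alpha-\beta\in\Phi_+\setminus\{\alpha\}$ and $X(\alpha)=X(\alpha-\beta)$ contradicts the result for $\alpha$. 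The delicate point throughout is the sign bookkeeping, to ensure that each branch terminates in a genuine contradiction of $X(-\alpha)\in\Delta_A(X)$.
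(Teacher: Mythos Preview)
Your proof is correct and follows essentially the same route as the paper's: sphericality from $\theta_{G,X}(G)\subseteq B_A$, the description of $\partial_A X$, $\Delta_A(X)$, and $\P_A(X)$ from which divisors the various $U_\alpha$ move, the image of $\iota$ by the zeros-and-poles argument, and the isomorphism $\iota^*|_{\Psi^\perp}$ from Lemma~\ref{lemma:independent}. For the final statement you argue exactly as the paper does, via Lemma~\ref{lemma:triple} and the identities~(\ref{eqn:psi+})--(\ref{eqn:psi-}).

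The one place you do more work than needed is the ``in particular'' clause. To get $\rho_{G,X}(X(-\alpha))\in\beta^\perp$ for $\beta\in\Psi\setminus\{\alpha\}$ you correctly reduce to $X(-\alpha)\notin\{X(\beta),X(-\beta)\}$; the first is the assertion just proved, but for the second you run a two-branch case analysis using the symmetric role of $\beta$. This is valid, but there is a one-line argument: if $X(-\alpha)=X(-\beta)$ then Lemma~\ref{lemma:triple} gives $\beta-\alpha\in\Phi$, which is impossible since the difference of two distinct simple roots has coefficients of both signs in the basis $\Psi$ and hence is never a root. The paper does not spell this out, presumably regarding it as immediate; your longer argument is not wrong, just unnecessary.
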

\begin{proof}
Since $\theta_{G,X}(G)\subseteq B_A$ has already an open orbit on $X$, the first statement is obvious. The statement about the $A$-stable prime divisors is also immediate.

Let us prove that the $A$-colors are the set $\Delta_A(X)$ as above defined. A color must be $X(\alpha)$ for some $\alpha\in\Phi$ otherwise it is $A$-stable, and at this point not being of the form $X(\alpha)$ for any $\alpha\in\Phi_+$ is equivalent to be stable under $B_A$. Then, we conclude using (\ref{eqn:psi+}) and (\ref{eqn:psi-}). The statement on the parabolic subgroup $\P_A(X)$ is obvious.

The inclusion $\iota$ is given by the simple observation that a $B_A$-eigenvector in $\CC(X)$ is a fortiori a $G$-eigenvector, with same eigenvalue; the identity $\rho_{A,X}(D)=\iota^*\rho_{G,X}(D)$ for any $B_A$-stable prime divisor is also obvious.

Let us prove that the image of $\iota$ is the lattice (\ref{eqn:image}). If $\gamma\in\Lambda_A(X)$, then a corresponding $B_A$-eigenvector $f_\gamma\in\CC(X)$ cannot have zeros nor poles on prime divisors $X(\alpha)$ for $\alpha\in\Psi$, since the latter divisors are not $B_A$-stable. Hence $\iota(\Lambda_A(X))\subseteq\left\{ \rho_{G,X}(X(\alpha))\;\middle\vert\;\alpha\in\Psi \right\}^\perp$. On the other hand, if $\chi\in\left\{ \rho_{G,X}(X(\alpha))\;\middle\vert\;\alpha\in\Psi \right\}^\perp$, then a corresponding $G$-eigenvector $f_\chi\in\CC(X)$ has zeros and poles only on $A$-stable prime divisors or on colors. It follows that $f_\chi$ is also a $B_A$-eigenvector, and the other inclusion is proved.

We prove now that $\iota^*|_{\Psi^\perp}$ is an isomorphism between $\Psi^\perp$ and $\N_A(X)$. From the first part of the proof, this follows if we prove that
\[
\Lambda\otimes_\ZZ\QQ = \left(\Psi\otimes_\ZZ\QQ\right) \oplus \left(\left\{ \rho_{G,X}(X(\alpha))\;\middle\vert\;\alpha\in\Psi \right\}^\perp\otimes_\ZZ\QQ\right),
\]
and this equality is a consequence of Lemma~\ref{lemma:independent}.

Let us check the last statement, so let $\alpha\in\Psi$ be such that $X(-\alpha)\in\Delta_A(X)$, and consider $\beta\in\Phi_+$, $\beta\neq\alpha$. We know that $X(-\alpha)\neq X(\beta)$ because of the definition of $\Delta_A(X)$ together with (\ref{eqn:psi+}). This also implies that $X(\alpha)\neq X(\beta)$, because otherwise we would have $\beta-\alpha\in\Phi_+$ with $X(-\alpha)=X(\beta-\alpha)$, thanks to Lemma~\ref{lemma:triple}.
\end{proof}

\begin{remark}
The two above results imply in particular that the $A$-colors of $X$, seen as elements of $\N_A(X)$, are linearly independent.
\end{remark}

\begin{example}
The projective space $X=\PP^n$, with $\D$ as in Example~\ref{ex:Pn} and $n\geq 2$, is not toroidal as an $A$-variety. The group $A\cong\GL(n)$ acts with a fixed point contained in all elements of $\partial_GX\smallsetminus\D$, therefore contained in all $A$-colors of $X$.
\end{example}

We can now state the main theorem of this section.

\begin{theorem}\label{thm:abelian}
If we identify $\N_A(X)$ and $\Psi^\perp$ via the map $\iota^*|_{\Psi^\perp}$ of Proposition~\ref{prop:abelian}, the fan of colored convex cones $\F_A(X)$ of $X$ as a spherical $A$-variety is obtained from the fan $\F_G(X)$ as follows:
\[
\F_A(X) = \left\{ (c\cap\Psi^\perp ,d(c)) \;\middle\vert\; c \in\F_G(X) \right\}.
\]
Here $d(c)$ is the set of $A$-colors $D$ of $X$ such that if $\beta\in\Phi_+$  satisfies $X(-\beta)=D$, then both $\rho_{G,X}(X(\beta))$ and $\rho_{G,X}(X(-\beta))$ lie on $1$-dimensional faces of $c$.
\end{theorem}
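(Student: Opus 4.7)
The plan is to compute, for each $G$-orbit $Y \subseteq X$ with $G$-cone $c = c_{X,Y} \in \F_G(X)$, the $A$-colored cone of the $A$-orbit $Y^A = A \cdot Y$ containing $Y$, and to show it equals $(c \cap \Psi^\perp, d(c))$ under the identification $\N_A(X) \cong \Psi^\perp$. The cone part is a direct consequence of Proposition~\ref{prop:abelian}: the $A$-stable prime divisors are those $D \in \partial_G X$ with $\rho_{G,X}(D) \in \Psi^\perp$, they contain $Y^A$ iff they contain $Y$, and so their $\rho_{A,X}$-images generate the cone $c \cap \Psi^\perp$.

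The heart of the proof is a lemma describing when $U_\beta$, for $\beta \in \Phi$, moves a given $G$-orbit $Y$. Using the formula $du_\beta = f_\beta\,\delta_\beta$, the fact that $\delta_\beta \in \Lie G$ is tangent to every $G$-orbit, and the divisor identity $\div f_\beta = \sum_D \langle \rho_{G,X}(D), \beta\rangle D$, I would check that $U_\beta$ fixes $Y$ pointwise except in the ``transverse'' case $Y \subseteq X(\beta)$, $Y \not\subseteq X(-\beta)$; in that case $U_\beta Y \supsetneq Y$ is an orbit whose cone equals $c_{X,Y}$ with the ray $\rho_{G,X}(X(\beta))$ removed. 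The only non-obvious subcase is $Y \subseteq X(\beta) \cap X(-\beta)$: at a generic point of the intersection with local coordinates $(x,y,\ldots)$, where $X(\beta) = \{x=0\}$ and $X(-\beta) = \{y=0\}$, any vector field of $B$-weight $\beta$ is forced by weight considerations to be a nonzero multiple of $y\,\partial_x$, which vanishes on $\{x=y=0\}$.

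Given this lemma, I would prove the key incidence criterion: an $A$-color $X(-\beta) \in \Delta_A(X)$ contains $Y^A$ if and only if, for every $\gamma \in \Phi_+$ with $X(-\gamma) = X(-\beta)$, both $\rho_{G,X}(X(\gamma))$ and $\rho_{G,X}(X(-\gamma))$ lie on $1$-dimensional faces of $c$. The forward direction follows from tangency: if $Y^A \subseteq X(-\gamma)$, then $du_{-\gamma}$ is tangent to $Y^A$ and hence to $X(-\gamma)$ along $Y^A$; by the lemma this forces $Y^A \subseteq X(\gamma) \cap X(-\gamma)$, and a fortiori $Y \subseteq X(\gamma) \cap X(-\gamma)$. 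For the reverse direction, under the given condition I would show that every generator $U_{\gamma'}$, with $\gamma' \in \Phi$, either stabilizes $X(-\beta)$ (namely when $X(\gamma') \neq X(-\beta)$) or, when $X(\gamma') = X(-\beta)$ forces $\gamma' = -\delta$ with $\delta \in \Psi$ and $X(-\delta) = X(-\beta)$, fixes $Y$ pointwise by the ``double-containment'' subcase of the lemma applied to $\delta$; Lemma~\ref{lemma:triple} propagates this argument through any auxiliary $U_{\gamma''}$ with $X(\gamma'') = X(\delta)$ that may arise along the way. Hence $A \cdot Y \subseteq X(-\beta)$.

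Combining the two steps, the colored cone of $Y^A$ is $(c \cap \Psi^\perp, d(c))$, and surjectivity of $c \mapsto (c \cap \Psi^\perp, d(c))$ onto $\F_A(X)$ follows because every $A$-orbit contains some $G$-orbit; the possible collapse of several $G$-cones to the same colored cone is automatic and harmless. The main obstacle is the lemma on the action of $U_\beta$, and within it the local-coordinate calculation forcing $du_\beta$ to vanish on $X(\beta) \cap X(-\beta)$; once that is in place, the rest is a combinatorial argument using Lemma~\ref{lemma:triple} to organize the case analysis of multiple roots pointing to the same divisor.
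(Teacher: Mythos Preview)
Your treatment of the color set $d(c)$ is essentially the paper's: both use the explicit local action of $u_{\pm\beta}$ (equivalently, your vector-field description $du_\beta=f_\beta\delta_\beta$) to show that $U_{-\beta}$ moves $Y$ off $X(-\beta)$ when $Y\not\subseteq X(\beta)$, and stabilizes $Y$ when $Y\subseteq X(\beta)\cap X(-\beta)$. One inaccuracy: in the case $Y\not\subseteq X(\beta)$ the group $U_\beta$ only stabilizes $Y$ as a set, it need not fix it pointwise; but this is all you use.

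The genuine gap is in your first paragraph. The cone attached to the $A$-orbit $AY$ is by definition generated by $\rho_{A,X}(D)$ for \emph{all} $B_A$-stable prime divisors $D\supseteq AY$, including the colors in $d(c)$, not only the $A$-stable ones. The $A$-stable divisors containing $Y$ generate only the subcone of $c\cap\Psi^\perp$ spanned by those rays of $c$ that already lie in $\Psi^\perp$, and this can be strictly smaller. For instance on $\PP^2$ with $\Psi=\{\alpha_1\}$, $X(\alpha_1)=X_1$, $X(-\alpha_1)=X_2$, the torus-fixed point $[0:0:1]$ lies in $X_1\cap X_2$ but not in the unique $A$-stable divisor $X_3$; its $G$-cone $c$ meets $\Psi^\perp$ in a half-line, generated entirely by the color $X_2$. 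Conversely, for a color $D\in d(c)$ one must check that $\iota^*(\rho_{G,X}(D))$, viewed in $\Psi^\perp$ via the identification, actually lies in $c$; since $\rho_{G,X}(D)$ itself is typically not in $\Psi^\perp$, this is not automatic. The paper handles both directions by an explicit construction: for each simple root $\alpha$ with $X(-\alpha)\in d(c)$ it exhibits
\[
y_\alpha=\rho_{G,X}(X(-\alpha))+\sum_{\substack{\beta\in\Phi_+\\ X(-\beta)=X(-\alpha)}}\rho_{G,X}(X(\beta)),
\]
an element of $c$ (by the definition of $d(c)$) with the same $\iota^*$-image as $\rho_{G,X}(X(-\alpha))$, and then proves $y_\alpha\in\Psi^\perp$ by a case analysis driven by Lemma~\ref{lemma:triple}. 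This is the most delicate step of the whole proof, and your proposal does not address it.
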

\begin{proof}
First, we consider $c\in\F_G(X)$ and we show that the colored cone $(c\cap\Psi^\perp,d(c))$ belongs to $\F_A(X)$.

The cone $c$ is equal to $c_{X,Y}$ for some $G$-orbit $Y$. We claim that the colored cone associated to the $A$-orbit $AY$ is given by $(c\cap\Psi^\perp, d(c))$, with $d(c)$ defined as in the theorem. To show the claim, it is enough to prove that:
\begin{enumerate}
\item \label{proof:abelian:A} the $A$-stable prime divisors containing $AY$ are the $G$-stable prime divisors $D$ such that $D\supseteq Y$ and $\rho_{G,X}(D)\in\Psi^\perp$;
\item \label{proof:abelian:colors} the set of the $A$-colors containing $AY$ is $d(c)$;
\item \label{proof:abelian:cone} the convex cone $c'$ generated by the image of elements of (\ref{proof:abelian:A}) and (\ref{proof:abelian:colors}) under the map $\iota^*\circ\rho$ is $c\cap\Psi^\perp$.
\end{enumerate}

Part (\ref{proof:abelian:A}) is obvious, thanks to the results on $\partial_AX$ contained in Proposition~\ref{prop:abelian}. For part (\ref{proof:abelian:colors}), let us first prove that a color $D$ not belonging to $d(c)$ doesn't contain $AY$. If $D$ doesn't contain $Y$ there is nothing to prove, therefore we may assume that $\rho_{G,X}(D)$ lies on a $1$-dimensional face of $c$. Suppose at first that $X(-\beta)=D$ for some $\beta\in\Phi_+$, in such a way that $X(\beta)$ doesn't contain $Y$.

Let $X_c$ be the affine $G$-stable open subset of $X$ such that its unique closed $G$-orbit has cone $c$. In other words:
\[
X_c = \left\{ x\in X \;\middle\vert\; \overline{Gx}\supseteq Y \right\}.
\]
It is isomorphic to an affine space, and thanks to Proposition~\ref{prop:oda} it is stable under the action of $U_{-\beta}$.

More precisely, there exist global coordinates $(x_1,\ldots,x_n)$ on $X_c$ such that $X(-\beta)\cap X_c$ is the hyperplane defined by the equation $x_1=0$, and in these coordinates $U_{-\beta}$ acts as follows:
\begin{equation}\label{eqn:color1}
u_{-\beta}(\xi)(x_1,x_2,\ldots,x_n) = (x_1+\xi,x_2,\ldots,x_n).
\end{equation}
This formula is checked directly using Proposition~\ref{prop:oda} and the fact that $X(-\beta)$ is the only $G$-stable prime divisor that contains $Y$ and where $\beta$ is non-zero. The hyperplane defined in $X_c$ by $x_1=0$ contains $Y$, but from (\ref{eqn:color1}) we deduce that it doesn't contain $U_\beta Y$. As a consequence, $AY$ is not contained in $X(-\beta)$.

Now we show that a color $D$ in $d(c)$ contains $AY$. At first, consider $\beta\in\Phi_+$ such that $X(-\beta)=D$. Both $X(-\beta)$ and $X(\beta)$ contain $Y$, and we consider again the affine space $X_c$. Applying Proposition~\ref{prop:oda} once again, there exist coordinates $(x_1,x_2,\ldots,x_n)$ such that $X(-\beta)\cap X_c$ is defined by the equation $x_1 = 0$, and $X(\beta)\cap X_c$ by the equation $x_2= 0$, and such that
\begin{equation}\label{eqn:color2a}
u_{-\beta}(\xi)(x_1,x_2,\ldots,x_n) = (x_1+\xi x_2,x_2,\ldots,x_n)
\end{equation}
and
\begin{equation}\label{eqn:color2b}
u_{\beta}(\xi)(x_1,x_2,\ldots,x_n) = (x_1,x_2+\xi x_1,\ldots,x_n).
\end{equation}
We obtain that $Y$ is both $U_{-\beta}$-stable and $U_{\beta}$-stable, being contained in the subset of $X_c$ defined by $x_1=x_2=0$. Therefore $X(-\beta)=D$ contains $Y=U_\beta U_{-\beta} Y$.

Now observe that the image of the multiplication map
\[
\prod_{\gamma\in\Phi}U_\gamma\times\theta_{G,X}(G) \to A
\]
(where the product is taken in any fixed order) is dense in $A$. It follows that $D$ contains $AY$, if we prove that $D$ is $U_\gamma$-stable for all $\gamma\in\Phi$ such that $\gamma\neq\pm\beta$ for all $\beta\in\Phi_+$ satisfying $X(-\beta)=D$. For $\gamma\in\Phi_-$ there is nothing to prove. But also for $\gamma\in\Phi_+$ we know that $D\neq X(\gamma)$: this fact stems from the last statement of Proposition~\ref{prop:abelian} together with (\ref{eqn:psi+}). The proof of (\ref{proof:abelian:colors}) is complete.

Let us prove (\ref{proof:abelian:cone}). Call $P$ the set of $A$-stable prime divisors containing $AY$. Then we can describe a minimal set of generators of $c$ (as a convex cone) as the union of the following subsets:
\begin{itemize}
\item[(a)] the set $\rho_{G,X}(P)$;
\item[(b)] for each color $D\in d(c)$ the set
\[
\{\rho_{G,X}(D)\} \cup \{ \rho_{G,X}(X(\beta)) \;|\; \beta\in\Phi_+,\, X(-\beta)=D \};
\]
\item[(c)] other generators, different from any of the above.
\end{itemize}

We show that $c\cap\Psi^\perp$ is contained in $c'$, and recall that the latter is generated by $\rho_{G,X}(P)$ together with $\iota^*(\rho_{G,X}(d(c)))$. An element $x\in c\cap\Psi^\perp$ is a linear combination with non-negative coefficients of the above generators, and we may assume that the elements of (a) do not contribute. This indeed implies the general case, since $\rho_{G,X}(P)\subseteq c'$.

Also, we may suppose that any generator $z$ involved in the linear combination giving $x$ satisfies $\iota^*(z)\neq0$. Indeed, otherwise we may suppress it using the fact that $x=\iota^*(x)$. Hence, all generators in the linear combination of $x$ are not of the form $\rho_{G,X}(X(\beta))$ for $\beta\in\Psi$.

It remains the generators $\rho_{G,X}(D)$ where $D\in d(c)$, and generators of (c) of the form $\rho_{G,X}(X(-\alpha))$ for some $\alpha\in\Psi$. In the second case $X(-\alpha)$ is a color, because it cannot be equal to $X(\beta)$ for any $\beta\in\Psi$. Being not in $d(c)$, each such $X(-\alpha)$ admits a positive root $\beta$ satisfying $X(-\beta) =  X(-\alpha)$ and $D_\beta$ not a generator of $c$. This implies that $\beta$ is non-positive on $c$, and the only chance for $x$ to be in $\beta^\perp$ is that such a generator $X(-\beta) = X(-\alpha)$ doesn't occur.

As a consequence, $x$ is a linear combination of the elements $\rho_{G,X}(D)$ with $D\in d(c)$, and we conclude that $x\in c'$ using again $\iota^*(x)=x$.

Finally, let $x\in c'$, and let us show that $x\in c\cap\Psi^\perp$. As before, we ignore the generators of $c'$ lying in $\Psi^\perp$, and we assume that $x$ is a linear combination with non-negative coefficients of $\iota^*(d(c))$. In other words:
\[
x = \sum_{\alpha\in\Psi, X(-\alpha)\in d(c)} a_\alpha \iota^*\left(\rho_{G,X}\left(X(-\alpha)\right)\right)
\]
with $a_\alpha \geq 0$. Consider a summand $a_\alpha \iota^*\left(\rho_{G,X}\left(X(-\alpha)\right)\right)$. For each positive root $\beta\neq\alpha$ such that $X(-\beta)=X(-\alpha)$, Lemma~\ref{lemma:triple} implies that $\gamma=\beta-\alpha$ and $-\gamma$ are also roots in $\Phi$, and that $X(-\alpha)=X(-\beta)$, $X(\alpha) = X(-\gamma)$, $X(\gamma) = X(\beta)$ are three distinct prime divisors. Then, we take the sum
\begin{equation}\label{eqn:y}
y = \sum_{\alpha\in\Psi, X(-\alpha)\in d(c)} a_\alpha y_\alpha
\end{equation}
where
\[
y_\alpha = \rho_{G,X}\left(X(-\alpha)\right)+ \sum_{\substack
{\beta\in\Phi_+, \\ X(-\beta)=X(-\alpha)}} \rho_{G,X}\left(X(\beta)\right).
\]
We claim that all simple roots in $\Psi$ are zero on this element, hence $\iota^*(y)=y$ and we immediately conclude that $y=x$. On the other hand, $y$ is in $c$ thanks to the definition of the set $d(c)$, therefore $x\in c\cap\Psi^\perp$.

Let us prove the claim. Let $\gamma\in\Psi$, and pick a $y_\alpha$. If $\gamma=\alpha$, then the last assertion of Proposition~\ref{prop:abelian} implies that $y_\alpha$ is the sum of $\rho_{G,X}(X(-\alpha))$ and $\rho_{G,X}(X(\alpha))$, plus other terms where $\alpha$ is zero. It follows $\langle y_\alpha, \gamma \rangle =0$.

If $\gamma\neq\alpha$, then $\langle\rho_{G,X}(X(-\alpha)), \gamma\rangle= 0$ thanks to Proposition~\ref{prop:abelian}. Moreover, in this case $\gamma$ does not appear as a $\beta$ in the sum expressing $y_\alpha$, because we know that $X(-\alpha) \neq X(-\gamma)$. Also, if $X(\pm\gamma)$ is different from $\rho_{G,X}(X(\beta))$ for all $\beta\in\Phi_+$ such that $X(-\beta)=X(-\alpha)$, then again $\langle y_\alpha, \gamma\rangle =0$.

Therefore we may suppose that $\gamma$ is different from all the $\beta$ appearing in the expression of $y_\alpha$, but some of them, say $\beta_{i,\gamma}$ for $i=1,\ldots,k$, satisfy $X(\beta_{i,\gamma}) = X(\epsilon_{i,\gamma}\gamma)$ where $\epsilon_{i,\gamma}=1$ or $-1$. In this case Lemma~\ref{lemma:triple} implies that $\beta_{i,\gamma} - \epsilon_{i,\gamma}\gamma$ also appears in the sum, with $X(\beta_{i,\gamma} - \epsilon_{i,\gamma}\gamma) = X(-\epsilon_{i,\gamma}\gamma)$. We obtain:
\[
\begin{array}{rcl}
y_\alpha &=& {\displaystyle\rho_{G,X}\left(X(-\alpha)\right)+ \sum_{i=1}^k\left( \rho_{G,X}\left(X(\beta_{i,\gamma})\right) + \rho_{G,X}\left(X(\beta_{i,\gamma} - \epsilon_{i,\gamma}\gamma)\right)\right) +} \\
& & + {\displaystyle \sum_{\substack{\beta\in\Phi_+, X(\beta)\neq X(\pm\gamma)\\ X(-\beta)=X(-\alpha)}} \rho_{G,X}\left(X(\beta)\right)} \\
&=& {\displaystyle\rho_{G,X}\left(X(-\alpha)\right)+ \sum_{i=1}^k\left( \rho_{G,X}\left(X(\gamma)\right) + \rho_{G,X}\left(X(-\gamma)\right)\right) + } \\
& & {\displaystyle+ \sum_{\substack{\beta\in\Phi_+, X(\beta)\neq X(\pm\gamma)\\ X(-\beta)=X(-\alpha)}} \rho_{G,X}\left(X(\beta)\right)}.
\end{array}
\]
From this expression it is evident that $\langle y_\alpha,\gamma\rangle =0$, and the proof of (\ref{proof:abelian:cone}) is complete.

To finish the proof of the theorem, we must check that all colored cones of $\F_A(X)$ appear as $(c\cap\Psi^\perp,d(c))$ for some $c\in\F_G(X)$. For this, it is enough to notice that for each $A$-orbit $Z$ there is a $G$-orbit $Y$ such that $AY=Z$.
\end{proof}

\begin{corollary}\label{cor:abelianSigma}
The $A$-variety is horospherical, i.e.\ $\Sigma_A(X)=\varnothing$.
\end{corollary}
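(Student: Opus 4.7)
The plan is to prove $\V_A(X)=\N_A(X)$, which is equivalent by definition to $\Sigma_A(X)=\varnothing$; the inclusion $\V_A(X)\subseteq\N_A(X)$ is automatic. Fix $v\in\N_A(X)$ and use the isomorphism $\iota^*|_{\Psi^\perp}$ of Proposition~\ref{prop:abelian} to lift $v$ to the unique $v'\in\Psi^\perp\subseteq\N_G(X)$ with $\iota^*(v')=v$. Since $G$ is a torus and $X$ has an open $G$-orbit, $X$ is a toric $G$-variety, so $\V_G(X)=\N_G(X)$; hence $v'$ is represented by a $G$-invariant discrete valuation $\nu$ on $\CC(X)$, determined by $\nu(f)=\langle v',\chi\rangle$ on each $G$-semiinvariant $f\in\CC(X)^{(G)}_\chi$.

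The crucial observation is that $\Psi$, being the basis of the root system $\Phi$ of $A$, spans $\Phi$ as a $\ZZ$-module; consequently $\Psi^\perp=\Phi^\perp$ in $\N_G(X)$, and $v'$ is orthogonal to every root of $A$. I then claim that $\nu$ is in fact $A$-invariant. Since $A$ is generated by $\theta_{G,X}(G)$ and the subgroups $U_\alpha$ for $\alpha\in\Phi$, it suffices to check $U_\alpha$-invariance for each $\alpha\in\Phi$. Using the explicit description of $u_\alpha$ from \cite[Proposition~3.14]{Oda88} (with $du_\alpha = f_\alpha\delta_\alpha$, $f_\alpha\in\CC(X)^{(G)}_\alpha$), one expands $u_\alpha(\xi)\cdot f$ as a linear combination $\sum_n a_n(\xi)\, f_\alpha^n f$ of $G$-semiinvariants with pairwise distinct $G$-eigenvalues $\chi+n\alpha$. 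The toric $G$-invariant valuation $\nu$ of such a combination equals $\min\{\langle v',\chi+n\alpha\rangle : a_n(\xi)\neq 0\}$, and this minimum is $\langle v',\chi\rangle=\nu(f)$ precisely because $v'\perp\alpha$.

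Therefore $\nu$ is $A$-invariant and $v=\iota^*(v')=\rho_{A,X}(\nu)\in\V_A(X)$, completing the proof. The main technical step will be carrying out the expansion of $u_\alpha(\xi)\cdot f$ and verifying that a $G$-invariant valuation on a toric $G$-variety satisfies $\nu(\sum g_n)=\min_n\nu(g_n)$ for $G$-semiinvariants $g_n$ of pairwise distinct $G$-eigenvalues; this is a standard property of toric valuations, which guarantees that no cancellation occurs at the lowest order between $G$-eigenfunctions of different weights.
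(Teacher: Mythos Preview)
Your approach is correct and genuinely different from the paper's. The paper argues indirectly: it replaces $X$ by a smooth complete \emph{toroidal} $A$-variety $Y$ with a birational $A$-equivariant map $Y\to X$, then applies Theorem~\ref{thm:abelian} to $Y$ to see that $\supp\F_A(Y)$ is all of $\N_A(Y)$; since $Y$ is toroidal and complete this equals $\V_A(Y)$, whence $\Sigma_A(X)=\Sigma_A(Y)=\varnothing$. Your argument instead produces, for each $v\in\N_A(X)$, an explicit $A$-invariant valuation representing it, and never touches Theorem~\ref{thm:abelian}. This is exactly the alternative the paper's Remark following the corollary has in mind (``one can derive the above corollary directly from Proposition~\ref{prop:abelian} and avoid using Theorem~\ref{thm:abelian}''). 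What your route buys is independence from the colored-fan computation; what the paper's route buys is brevity once that theorem is in hand.

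One technical point should be tightened. The displayed expansion $u_\alpha(\xi)\cdot f=\sum_n a_n(\xi)\,f_\alpha^n f$ is not correct as stated: for a \emph{rational} $G$-eigenfunction the orbit $\xi\mapsto u_\alpha(\xi)\cdot f$ need not be polynomial in $\xi$ (take $f=f_\alpha^{-1}$ on $\PP^1$), and even when it is, the weight-$(\chi+n\alpha)$ component need not equal a scalar multiple of $f_\alpha^n f$. The fix is to work on a $U_\alpha$-stable affine chart $X_c$ and restrict first to \emph{regular} $G$-eigenfunctions $f\in\CC[X_c]^{(G)}_\chi$: the $\GG_a$-action on $\CC[X_c]$ is locally finite, so $u_\alpha(\xi)\cdot f=\sum_{n\ge 0}\xi^n g_n$ is a genuine finite sum with $g_n\in\CC[X_c]^{(G)}_{\chi+n\alpha}$ and $g_0=f$. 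Your min-property argument then gives $\nu(u_\alpha(\xi)\cdot f)=\min_n\langle v',\chi+n\alpha\rangle=\langle v',\chi\rangle=\nu(f)$ since $v'\perp\alpha$. Every rational $G$-eigenfunction on $X$ is a ratio of two such regular ones on $X_c$, so $U_\alpha$-invariance of $\nu$ follows. With this adjustment the proof goes through.
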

\begin{proof}
There exists a smooth complete toroidal $A$-variety $Y$ equipped with a surjective birational $A$-equivariant morphism $Y\to X$ (it is enough to choose an $A$-equivariant resolution of singularities of the variety given in \cite[Lemma 5.2]{Kn91}, where $X''$ in the proof of {\em loc.cit.} is our $X$).

Then $\Sigma_A(Y)=\Sigma_A(X)$, and $Y$ is also a complete $G$-regular  embedding. Applying Theorem~\ref{thm:abelian} to $Y$, it follows that $\supp\F_A(Y)$ is a vector space, and it is equal to $\V_A(Y)$ because $Y$ is toroidal and complete. We conclude that $\Sigma_A(Y)=\varnothing$.
\end{proof}

\begin{remark}
With a slightly more involved proof, one can derive the above corollary directly from Proposition~\ref{prop:abelian} and avoid using Theorem~\ref{thm:abelian}.
\end{remark}

\begin{remark}
Notice that $d(c)=\varnothing$ if and only if $c\cap\Psi^\perp$ is a face of $c$. In other words $X$ is toroidal under the action of $A$ if and only if the subspace $\Psi^\perp$ intersects each cone of the fan of $X$ in a face of the cone.
\end{remark}

\begin{example}
Let us compute the colored fan of $X=\PP^2$, as in Example~\ref{ex:Pn} with $n=2$. We consider again the set $\D=\{D_3\}$, so $\E=\{D_1, D_2\}$, where $D_i$ is given by the vanishing of the homogeneous coordinate $x_i$. We have $D_1 = X(\alpha_1)=X(\alpha_1+\alpha_2)$, $D_2 = X(\alpha_2)=X(-\alpha_1)$ and $D_3=X(-\alpha_1-\alpha_2) =X(-\alpha_2)$. Then $A\subset\Autz(\PP^2,\D)$ is isomorphic to $\GL(2)$ with roots $\alpha_1$ and $-\alpha_1$; choosing the Borel subgroup of $A$ stabilizing the point $[1,0,0]$ corresponds to choosing $\Psi=\{\alpha_1\}$.  The lattice $\Lambda_A(\PP^2)$ is $\rho_{G,\PP^2}(D_1)^\perp=\ZZ\alpha_2$, and $\PP^2$ has only one $A$-color, namely $D_2$. The maximal colored cones of the original fan $\F_G(\PP^2)$ are the cones $c_{i,j}$ generated by $\rho_{G,\PP^2}(D_i)$ and $\rho_{G,\PP^2}(D_j)$ for all $i< j\in\{1,2,3\}$.

The intersection of $c_{1,3}$ and $c_{2,3}$ with $\Psi^\perp$ is the half line $\QQ_{\geq0}$ if we identify $\N_A(\PP^2)$ with $\QQ$ using the basis dual to $\alpha_2\in\Lambda_A(\PP^2)$. This intersection is a face of both cones, so this yields the maximal colored cone $(\QQ_{\geq0},\varnothing)$ of the fan $\F_A(\PP^2)$. The intersection of $c_{1,2}$ with $\Psi^\perp$ corresponds to $\QQ_{\leq0}$, and it is {\em not} a face of $C_{1,2}$; correspondingly, the unique $A$-color $D_2$ satisfies the condition required to be in $d(c_{1,2})$. This yields the other maximal colored cone of $\F_A(\PP^2)$, namely $(\QQ_{\leq0}, \{D_2\})$.
\end{example}

\section{Semisimple case}\label{s:semisimple}
In this section we assume that $G$ is a semisimple group, i.e.\ $C=\{e\}$. In this setting the functionals associated to the colors of $X$ generate $\N_G(X)$ as a vector space. Indeed, if $\lambda\in\Lambda_G(X)$ is in $\rho_{G,X}(\Delta_G(X))^\perp$, then a rational function $f\in\CC(G/H)^{(B)}_\lambda$ is regular on $G/H$ and nowhere zero. It can be then lifted to a nowhere-vanishing function $F\in\CC[G]$, which is then constant since $G$ has no non-trivial character (see \cite[Proposition~1.2]{KKV89}). We conclude that $\lambda=0$, and the claim follows.

This essentially implies the following main result of this section.

\begin{theorem}\label{thm:semisimple}
If $G$ is semisimple and $\D$ is any subset of $\partial_G X$, then $\Autz(X,\D\cup(\partial_G X)^{\ell})$ is a Levi subgroup of $\Autz(X,\D)$.
\end{theorem}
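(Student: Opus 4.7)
The plan is to apply Theorem~\ref{thm:nonlinear} first, to reduce to the case where $\theta_{G,X}(G)$ itself equals $\Autz(X, \D\cup(\partial_G X)^\ell)$, and then to show that the complementary summand in $\Lie\Autz(X,\D)$ consists of nilpotent vector fields. Set $L = \Autz(X,\D\cup(\partial_G X)^\ell)$ and $H = \Autz(X,\D)$. By Theorem~\ref{thm:nonlinear} applied to $\D\cup(\partial_G X)^\ell$, the group $L$ is reductive and $X$ is a complete $L$-regular embedding; in fact $L$ is semisimple, since the semisimplicity of $G$ forces $C=\Z(G)^\circ=\{e\}$, whence in Definition~\ref{def:Anl} we have $A=A'\times C=A'$. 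Corollary~\ref{cor:partialt} ensures $(\partial_L X)^\ell=(\partial_G X)^\ell$, so after replacing $G$ with $L$ we may assume $\theta_{G,X}(G)=L$; then $\E:=\partial_G X\setminus\D$ is a subset of $(\partial_G X)^\ell$, and the theorem reduces to showing that $\theta_{G,X}(G)$ is a Levi subgroup of $H$.

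Using Corollary~\ref{cor:dkerpsi} (with $C=\{e\}$ killing the central-torus summand of $\ker d\psi_*$), choose a $G$-equivariant complement $\mathfrak u$ to $\Lie\theta_{G,X}(G)$ in $\Lie H$ satisfying
\[
\mathfrak u \;=\; \bigoplus_{D\in\E}\frac{H^0(X,\OO_X(D))}{\CC}\;\subseteq\;\ker d\psi_*.
\]
It suffices to prove every element of $\mathfrak u$ is nilpotent in $\Lie H$: once this is known, $\mathfrak u$ generates under the adjoint $G$-action a $G$-stable nilpotent Lie ideal of $\Lie H$ whose connected subgroup $U\subseteq H$ is a unipotent normal complement to $\theta_{G,X}(G)$, exhibiting the latter as a Levi subgroup of $H$.

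Decompose $\mathfrak u$ into simple $G$-submodules and let $v$ be a highest weight vector in one of them, of weight $\gamma\in\Lambda_G(X)$. One has $\gamma\neq 0$: a $B$-invariant representative of a nonzero class in $H^0(X,\OO_X(D))/\CC$ would be a nonconstant $B$-invariant rational function on $X$, which is impossible since the open $B$-orbit is dense. Lemma~\ref{lemma:automfiber} together with the Demazure structure theorem for toric varieties (Definition~\ref{def:roots} and \cite[Proposition~3.14]{Oda88}) identifies $d\kappa_{x_0'}(v)$ with the nilpotent root derivation $u_\gamma$ on the toric fiber $X_{x_0'}$. Write the Jordan decomposition $v=v_s+v_n$ in $\Lie H$; since Jordan decomposition in the Lie algebra of an algebraic group is intrinsic, both summands lie in $\ker d\psi_*$ and are $B$-eigenvectors of weight $\gamma$, and applying the Lie algebra homomorphism $d\kappa_{x_0'}$ gives $d\kappa_{x_0'}(v_s)=0$, so $v_s$ vanishes as a vector field on $X_{x_0'}$. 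The zero locus of $v_s$ is a $B$-stable closed subvariety of $X$ containing $X_{x_0'}$, hence containing $\psi^{-1}(Bx_0')$, which is dense in $X$ because $Bx_0'=\psi(Bx_0)$ is dense in $X'$. Thus $v_s=0$ and $v=v_n$ is nilpotent.

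The most delicate point is verifying that the Jordan decomposition of $v$ is compatible with both the containment $\mathfrak u\subseteq\ker d\psi_*$ and the $B$-weight structure, so that restriction via $d\kappa_{x_0'}$ correctly separates semisimple and nilpotent parts on the toric fiber; once this is in place the density argument on $B$-orbits closes the nilpotency claim. The semisimplicity hypothesis on $G$ enters only in Step~1, through $C=\{e\}$, which is precisely the condition ensuring that $\mathfrak u$ consists solely of boundary-divisor contributions and carries no extra central semisimple piece.
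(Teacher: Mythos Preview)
Your reduction step (replacing $G$ by $L=\Autz(X,\D\cup(\partial_G X)^\ell)$ via Theorem~\ref{thm:nonlinear} and Corollary~\ref{cor:partialt}) matches the paper's. The divergence comes afterward, and there is a genuine gap.

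You assert that it suffices to prove every element of $\mathfrak u$ is nilpotent, and then you prove this only for highest weight vectors. But nilpotency of each highest weight vector $v$ does not imply nilpotency of an arbitrary $w\in\mathfrak u$: while the $G$-orbit of $v$ consists of nilpotent elements (conjugation preserves nilpotency), linear combinations of nilpotent elements need not be nilpotent. Your Jordan-decomposition argument relies essentially on $v_s$ being a $B$-eigenvector so that its zero locus is $B$-stable; for a general $w$ this fails, and varying $x'$ does not help because for a sum $w=\sum_i w_i$ across several simple summands, $d\kappa_{x'}(w)=\sum_i d\kappa_{x'}(w_i)$ is a sum of root vectors in $\Lie\Autz(X_{x'})$ which is nilpotent only if those root vectors sit in a common nilpotent subalgebra. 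That is exactly what you have not established.

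The paper supplies precisely this missing ingredient via Lemma~\ref{lemma:verysolvablefiber}, whose proof rests on Lemma~\ref{lemma:opposite}: the set $R=\{\gamma|_S:\ X(\gamma)\text{ exists},\ X(\gamma)\in(\partial_GX)^\ell\}$ contains no pair $\alpha,-\alpha$, because such a pair would force both $\gamma_i$ to vanish on all colors, and in the semisimple case the colors span $\N_G(X)$. Hence the whole image $\kappa_{x'}((\ker\psi_*)^\circ)$ lies in a Borel subgroup of $\Autz(X_{x'})$, so $(\ker\psi_*)^\circ$ is solvable; combined with Proposition~\ref{prop:kerpsi} (applied with $G$ replaced by the universal cover of $A$) this yields the Levi decomposition directly. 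Your elementwise approach could be repaired by invoking Lemma~\ref{lemma:opposite} to force all the root vectors $d\kappa_{x'}(w_i)$ into the nilradical of a single Borel, but without that step the argument does not close. The further claim that $\mathfrak u$ ``generates a $G$-stable nilpotent Lie ideal'' also needs justification: one must check $[\mathfrak u,\mathfrak u]\subseteq\mathfrak u$, which follows from $\mathfrak u=\Lie H\cap\ker d\psi_*$ (using $C=\{e\}$ and Corollary~\ref{cor:dkerpsi}), but you do not say this.
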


The proof is at the end of this section. The theorem implies that if $G$ is semisimple then Section~\ref{s:nonlinear} is enough to describe a Levi subgroup of $\Autz(X,\D)$ and its action on $X$, without any restriction on $\D$.

Recall from Section~\ref{s:restricting} the restriction map
\[
\kappa_{x'}\colon (\ker\psi_*)^\circ \to \Autz(X_{x'})
\]
where $x'$ lies on the open $G$-orbit of $X'$, and $X_{x'}=\psi^{-1}(x')$.

\begin{lemma}\label{lemma:verysolvablefiber}
For all $x'$ in the open $B$-orbit of $X'$, the image of $\kappa_{x'}$ in $\Autz(X_{x'})$ is connected and solvable.
\end{lemma}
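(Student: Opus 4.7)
The plan is to exhibit a Borel subgroup of $\Autz(X_{x_0'})$ containing $K:=\kappa_{x_0'}((\ker\psi_*)^\circ)$ by controlling the Lie algebra $\Lie K = d\kappa_{x_0'}(\ker d\psi_*)$; since varying $x'$ in the open $B$-orbit only conjugates $K$ by an element of the image of $B$, and conjugates of Borel subgroups are Borel, it suffices to treat $x'=x_0'$.

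I would first combine Corollary~\ref{cor:dkerpsi} with Lemma~\ref{lemma:automfiber} to decompose $\ker d\psi_*$ into $\Lie\theta_{G,X}(C)$ plus one contribution per simple $G$-submodule $V\subseteq H^0(X,\OO_X(D))/\CC$ for $D\in(\partial_G X)^\ell$: for each such $V$, the image $d\kappa_{x_0'}(V)$ is spanned by the image of a $B$-highest weight vector $v_V$ of weight $\gamma_V$. By the analysis in the proof of Lemma~\ref{lemma:automfiber}, $d\kappa_{x_0'}(v_V)$ is identified via the exact sequence~(\ref{eqn:seqfiber}) with the class of $s_V|_{X_{x_0'}}$, an $S$-eigenvector of weight $\bar\gamma_V:=\gamma_V|_{\V^\ell_G(X)}$. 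Using Demazure's description of $\Autz(X_{x_0'})$ recalled in \S\ref{s:abelian}, this class corresponds to the unipotent one-parameter subgroup $u_{\bar\gamma_V}$ attached to the toric root $\bar\gamma_V\in\Phi(X_{x_0'})$ when $\bar\gamma_V\neq 0$, and lies in $\Lie S$ otherwise; similarly, $\kappa_{x_0'}(\theta_{G,X}(C))$ centralizes $S$ in the reductive group $\Autz(X_{x_0'})$ and hence lies in $S$. In total, $\Lie K = \mathfrak{t}\oplus\bigoplus_{\bar\gamma\in\Psi}\CC u_{\bar\gamma}$ for some $\mathfrak{t}\subseteq\Lie S$ and some finite $\Psi\subseteq\Phi(X_{x_0'})$.

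The main step I would carry out is to show $\Psi\cap(-\Psi)=\varnothing$. Suppose toward contradiction that $\bar\gamma_1$ and $\bar\gamma_2=-\bar\gamma_1$ both lie in $\Psi$, with corresponding lifts $\gamma_1,\gamma_2\in\Lambda_G(X)$ satisfying $X(\gamma_i)\in(\partial_G X)^\ell$ and $\langle m,\gamma_1\rangle=-\langle m,\gamma_2\rangle$ for every $m\in\V^\ell_G(X)$. Lemma~\ref{lemma:opposite} would then force $\langle\rho_{G,X}(D),\gamma_i\rangle=0$ for all $D\in(\partial_G X)^{n\ell}\cup\Delta_G(X)$. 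Since $G$ is semisimple, the observation at the start of this section shows $\rho_{G,X}(\Delta_G(X))$ spans $\N_G(X)$ as a $\QQ$-vector space, forcing $\gamma_i=0$, which contradicts the existence of $X(\gamma_i)$.

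To conclude, note that $\Lie K$ is a Lie subalgebra of $\Lie\Autz(X_{x_0'})$, so $\Psi$ must be closed under root addition inside $\Phi(X_{x_0'})$. I would then invoke the standard result on reduced finite root systems that any closed subset satisfying $\Psi\cap(-\Psi)=\varnothing$ is contained in some positive system $\Phi_+$; the Borel subgroup of $\Autz(X_{x_0'})$ associated to $\Phi_+$ then contains $\Lie K$, and by connectedness contains $K$. The hardest step is precisely $\Psi\cap(-\Psi)=\varnothing$, where both Lemma~\ref{lemma:opposite} and the semisimplicity of $G$ (via the spanning property of colors) enter essentially.
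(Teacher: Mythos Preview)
Your argument is correct and follows the same route as the paper: identify the root spaces of $K$ via Corollary~\ref{cor:dkerpsi} and Lemma~\ref{lemma:automfiber}, then use Lemma~\ref{lemma:opposite} together with the fact that colors span $\N_G(X)$ (from semisimplicity) to rule out opposite roots. The paper is terser in the final step, simply asserting that $R\cap(-R)=\varnothing$ forces very-solvability, whereas you spell out the closed-subset-of-a-root-system argument; also note that in this section $C$ is trivial, so the remark about $\kappa_{x_0'}(\theta_{G,X}(C))$ is vacuous.
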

\begin{proof}
The image $\kappa_{x'}$ is connected since $(\ker\psi_*)^\circ$ is connected; we prove it's solvable. To simplify notations we assume that $x'=x'_0$. Let $\E'\subseteq\partial_S X_{x'_0}$ be the following subset:
\[
\E' = \{ E\cap X_{x'_0} \;|\; E\in \E\},
\]
and define $\D' = \partial_S X_{x'_0}\smallsetminus \E'$. Let us also denote by $K_{x'_0}$ the image of $\kappa_{x'_0}$: it is obviously a subgroup of $\Autz(X_{x'_0},\D')$. On the other hand $K_{x'_0}$ contains the maximal torus $S$ of $\Autz(X_{x'_0})$, hence we only have to compute the root subgroups it contains. Thanks to Lemma~\ref{lemma:automfiber} and Corollary~\ref{cor:dkerpsi}, they are the root spaces $U_\alpha\subset\Autz(X_{x'_0})$ for $\alpha$ varying in the set
\[
R = \left\{ \gamma|_S \;\middle\vert\; 0\neq \gamma\in\Lambda_G(G/H), \; X(\gamma) \textrm{ exists and } X(\gamma) \in \E^\ell \right\}.
\]
From Lemma~\ref{lemma:opposite}, we obtain that $R$ doesn't contain the opposite of any of its elements, therefore $K_{x'_0}$ is solvable.
\end{proof}

\begin{example}
Let $X$ and $G$ be as in Example~\ref{ex:blowup1}. Recall that the $G$-stable prime divisors $D_1$, $D_2$, and $E$ have valuations resp.\ $(-1,1)$, $(1,-1)$ and $(-1,0)$ with respect to the basis given by the two colors, and that the first two are those in $(\partial X)^\ell$. As in the proof of Lemma~\ref{lemma:verysolvablefiber}, there exists no $\gamma\in\Lambda_G(X)$ such that $X(\gamma)=D_1$ and $X(-\gamma)=D_2$.
\end{example}

\begin{proof}[Proof of Theorem~\ref{thm:semisimple}]
First, observe that $(\ker\psi_*)^\circ$ is solvable. This stems from Lemma~\ref{lemma:verysolvablefiber}, and the obvious observation that
\begin{equation}\label{eqn:intker}
\bigcap_{x' \textrm{ in the open $B$-orbit of $X'$}} \ker(\kappa_{x'}) = \left\{\id_{X}\right\}.
\end{equation}

Consider now the variety $X$ under the action of $A=\Autz(X,\D\cup(\partial_G X)^{\ell})$. Thanks to Theorem~\ref{thm:nonlinear}, the group $A$ is semisimple (because here $G$ is semisimple) and under its action $X$ is a $G$-regular embedding with boundary $\D\cup(\partial_G X)^{\ell}$. Corollary~\ref{cor:partialt} implies $(\partial_A X)^{n\ell}=\D^{n\ell}\subseteq \D$, and we deduce that $\Autz(X,\D)\subseteq\Autz(X,(\partial_AX)^{n\ell})$.

Then we may apply Proposition~\ref{prop:kerpsi} with $G$ replaced by the universal cover of $A$: the theorem follows.
\end{proof}

\begin{remark}
Let $X$ and $G$ be as in Example~\ref{ex:blowup1}. Then $\Autz(X)$ is not reductive. Indeed, it must fix the point $p\in\PP^{n+1}$, and it is elementary to conclude that $\Autz(X)$ is the corresponding maximal proper parabolic subgroup of $\PGL(n+2)\times\PGL(n+1)$. The unipotent radical $\Autz(X)^u$ can be studied restricting its elements to the generic fiber $X_{x_0'}$; however, the example shows that for any given fiber the restriction may be non-injective, therefore a global analysis of these restrictions is needed. This goes beyond the scope of the present work.
\end{remark}

\section{Prime divisors on the linear part of the valuation cone}\label{s:linear}
In this section $G=G'\times C$ is neither abelian nor semisimple. Thanks to Section~\ref{s:abelian}, we may assume that $G'$ acts non-trivially on $X$. The variety $X'$ is then not a single point.

Our goal is to study $\Autz(X,\D)$ for a general $\D$. Denote as usual $\E=\partial_G X\smallsetminus \D$.

We begin applying the results of Section~\ref{s:nonlinear} to the group $\widetilde G= \Autz(X,\D\cup(\partial_GX)^\ell)$. From Theorem~\ref{thm:nonlinear} and Corollary~\ref{cor:partialt} it follows that $X$ is a regular $\widetilde G$-variety with boundary $\D\cup(\partial_GX)^\ell$, and the valuations of the elements of $(\partial_GX)^\ell$ lie on the linear part of the valuation cone both with respect to the $G$-action and to the $\widetilde G$-action on $X$.

Therefore we may replace $G$ with the group $\widetilde G$, and proceed to our analysis on $\Autz(X,\D)$ where now $X$ is a complete $G$-regular embedding satisfying $\D\supseteq (\partial_G X)^{n\ell}$.

Recall that $S$ acts on $X$ naturally by $G$-equivariant automorphisms preserving the fibers of $\psi$, so we can consider $S$ as a subgroup of $\Autz(X,\partial_G X)\cap (\ker \psi_*)^\circ$.

\begin{proposition}\label{prop:Levinl}
Let $x'$ in the open $G$-orbit of $X'$, and $L=L(X,\D)$ be a Levi subgroup of $(\Autz(X,\D)\cap\ker(\psi_*))^\circ$ containing $S$. Then $L_{x'}=\kappa_{x'}(L)$ is isomorphic to $L$, and the group
\[
(\theta_{G,X}(G),\theta_{G,X}(G))\times L(X,\D)
\]
is locally isomorphic to a Levi subgroup of $\Autz(X,\D)$.
\end{proposition}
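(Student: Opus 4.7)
The plan is to combine the fiberwise Lie-algebra analysis of Lemma~\ref{lemma:automfiber} with the semidirect structure from Proposition~\ref{prop:kerpsi}, using the abelian Demazure picture of Section~\ref{s:abelian} to identify $\kappa_{x'}(L)$ on the toric fiber $X_{x'_0}$.

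For part (1), the approach is as follows. By Corollary~\ref{cor:dkerpsi} and the assumption $\D \supseteq (\partial_G X)^{n\ell}$ (so $\E \subseteq (\partial_G X)^\ell$), the Lie algebra of $(\Autz(X,\D) \cap \ker\psi_*)^\circ$ lies inside $\Lie \theta_{G,X}(C) \oplus \bigoplus_{D \in \E} H^0(X, \OO_X(D))/\CC$, with each summand in the second term a multiplicity-free $G$-module. After enlarging the assumed inclusion $S \subseteq L$ to $\theta_{G,X}(C) \subseteq L$ (possible since the latter is a torus lying in the ambient group and all Levi subgroups are conjugate), Lemma~\ref{lemma:automfiber} shows that $d\kappa_{x'_0}$ sends each simple $G$-submodule of the above sum onto a one-dimensional subspace generated by the image of its highest-weight vector. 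The root spaces of $L$ with respect to $\theta_{G,X}(C)$ must therefore be precisely these highest-weight lines, any non-highest-weight vector being forced into the unipotent radical by the bracket structure. Hence $d\kappa_{x'_0}|_{\Lie L}$ is injective, so $\ker(\kappa_{x'_0}|_L)$ is a finite central subgroup; its triviality then follows from (\ref{eqn:intker}) by a semi-continuity argument on the algebraic family $\{\ker(\kappa_{x'}|_L)\}_{x' \in X'}$, using transitivity of $G$ on the open $G$-orbit of $X'$ to extend the generic triviality of $\ker(\kappa_{x'}|_L)$ to every $x'$ in the open orbit. The main obstacle I anticipate lies in rigorously identifying the root spaces of $L$ as highest-weight lines; this reduces to analyzing the toric fiber $X_{x'_0}$ via Lemma~\ref{lemma:Saction} and matching the Demazure root system $\Phi(X_{x'_0}, \D \cap X_{x'_0})$ of Section~\ref{s:abelian}.

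For part (2), I invoke Proposition~\ref{prop:kerpsi}: since $\theta_{G,X}(G')$ fixes every $G$-stable prime divisor of $X$ pointwise, the local isomorphism (\ref{eqn:semidirprod}) restricts to $\Autz(X,\D) \cong \theta_{G,X}(G') \ltimes (\Autz(X,\D) \cap \ker\psi_*)^\circ$ up to isogeny. A Levi subgroup of a semidirect product $H \ltimes K$ with $H$ semisimple is $H \cdot \mathrm{Levi}(K)$, so $\theta_{G,X}(G') \cdot L$ is a Levi of $\Autz(X,\D)$. The intersection $\theta_{G,X}(G') \cap L$ lies in $\theta_{G,X}(G') \cap \ker\psi_* \subseteq \theta_{G,X}(G' \cap \Z(G))$, a finite group, yielding the asserted local isomorphism $(\theta_{G,X}(G), \theta_{G,X}(G)) \times L(X,\D) \to \theta_{G,X}(G') \cdot L \subseteq \Autz(X,\D)$.
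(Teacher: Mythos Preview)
Your argument for part~(2) is correct and is essentially the paper's: both invoke Proposition~\ref{prop:kerpsi} to obtain the local semidirect product and read off a Levi subgroup. (Minor slip: $\theta_{G,X}(G')$ \emph{stabilizes}, not fixes pointwise, each $G$-stable prime divisor.)

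For part~(1), however, you take a substantial detour and leave precisely the gap you anticipate. You attempt to locate $\Lie L$ explicitly inside $\ker d\psi_*$ by claiming that its root spaces with respect to $S\cong\theta_{G,X}(C)$ are the highest-weight lines of the simple $G$-summands. This claim is neither proved nor obviously true: the $S$-weight spaces inside $\bigoplus_{D\in\E}H^0(X,\OO_X(D))/\CC$ are typically larger than single highest-weight lines, and there is no a~priori reason a Levi complement must select exactly those lines. The subsequent semi-continuity argument for triviality of a finite kernel is also only sketched.

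The paper bypasses all of this. One observes directly that $\ker\kappa_{x'}$ is \emph{unipotent} as a subgroup of $(\ker\psi_*)^\circ$: indeed $S$ is a maximal torus of $(\ker\psi_*)^\circ$ (every $S$-weight on the summands $H^0(X,\OO_X(D))/\CC$ with $D\in(\partial_GX)^\ell$ is nonzero, since $\langle\rho_{G,X}(D),\gamma\rangle=-1$ and $\rho_{G,X}(D)\in\V^\ell_G(X)$), and $S$ acts faithfully on the toric fiber $X_{x'}$ for every $x'$ in the open $G$-orbit; as $\ker\kappa_{x'}$ is normal in $(\ker\psi_*)^\circ$, it can contain no nontrivial semisimple element. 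Then $L\cap\ker\kappa_{x'}$ is a unipotent normal subgroup of the reductive group $L$, hence trivial. No explicit description of $\Lie L$ is required. The finer Lie-algebra analysis you begin is in fact what the paper carries out \emph{after} this proposition, to describe $L_{x'_0}$ combinatorially---but it is not needed for the bare isomorphism $L\cong L_{x'}$.
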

\begin{proof}
Thanks to formula (\ref{eqn:intker}) and the fact that $x'$ is generic in $X'$, we know that the map $L\to L_{x'}$ has unipotent kernel, therefore is an isomorphism. The rest follows from Proposition~\ref{prop:kerpsi}.
\end{proof}

Recall that we have chosen a base point $x_0\in X$ in the open $B$-orbit of $X$, and we denote by $x_0'$ its image in $X'$.

\begin{definition}
We define the following group:
\[
A=A(X,\D)=(\theta_{G,X}(G),\theta_{G,X}(G))\times L(X,\D),
\]
where $L(X,\D)$ is defined as in Proposition~\ref{prop:Levinl} choosing $x'=x_0'$.
\end{definition}

We describe now the reductive group $L_{x_0'}$ in terms of the root subspaces it contains with respect to its maximal torus $S$. 

\begin{definition}
We define
\[
R = R(X,\D) = \left\{ \gamma|_{S} \;\middle\vert\; 0\neq \gamma\in\Lambda_G(G/H),\; X(\gamma) \textrm{ exists and } X(\gamma)\in\E \right\},
\]
and we denote by $\Phi=\Phi(X,\D)$ the maximal subset of $R$ such that $-\alpha\in R$ for every $\alpha\in R$.
\end{definition}

\begin{proposition}\label{prop:Levinldesc}
The set $\Phi(X,\D)$ is a subset of $\Phi(X_{x_0'},\D')$, where $\D'=\{ D\cap X_{x_0'} \;|\; D\in \D^\ell \}$. Moreover, $L_{x_0'}\subseteq \Autz(X_{x_0'})$ is generated by $S$ together with all subgroups $U_{\alpha}$ such that $\alpha\in\Phi(X,\D)$.
\end{proposition}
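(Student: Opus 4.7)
The plan is to reduce both claims to a computation of the image of $d\kappa_{x_0'}$ on the Lie algebra $\mathfrak{n} := \Lie(\Autz(X,\D)\cap\ker\psi_*)^\circ$, via Lemma~\ref{lemma:automfiber}, which describes the effect of restriction to $X_{x_0'}$ on each simple $G$-submodule.

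First I would identify $\mathfrak{n}$. Since $\D\supseteq(\partial_G X)^{n\ell}$ we have $\E\subseteq(\partial_G X)^\ell$, so combining the exact sequence~(\ref{eqn:exact}) with Corollary~\ref{cor:dkerpsi} yields
\[
\mathfrak{n} \;=\; \Lie\theta_{G,X}(C) \;\oplus\; \bigoplus_{D\in\E}\frac{H^0(X,\OO_X(D))}{\CC}.
\]
For each simple $G$-submodule $V\subseteq H^0(X,\OO_X(D))/\CC$ with highest weight $\gamma$ (so $D=X(\gamma)\in\E$), Lemma~\ref{lemma:automfiber} gives $d\kappa_{x_0'}(V)=\CC\cdot d\kappa_{x_0'}(v)$, where $0\neq v$ corresponds to a section $s\in H^0(X,\OO_X(D))^{(B)}_\gamma$, and this image is sent by the surjection in~(\ref{eqn:seqfiber}) to the class $[s|_{X_{x_0'}}]$. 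The restriction $s|_{X_{x_0'}}$ is an $S$-eigenvector of weight $\alpha:=\gamma|_S$. Crucially, $\alpha\neq 0$: indeed $\rho_{G,X}(X(\gamma))$ lies in $\V^\ell_G(X)$ (because $X(\gamma)\in(\partial_G X)^\ell$) and corresponds under Lemma~\ref{lemma:Saction} to a nonzero cocharacter of $S$, while $\langle\rho_{G,X}(X(\gamma)),\gamma\rangle=-1$. Hence $s|_{X_{x_0'}}$ is non-constant, $[s|_{X_{x_0'}}]\neq 0$, and $d\kappa_{x_0'}(v)$ spans the root space of $\Autz(X_{x_0'})$ with $S$-weight $\alpha$ in the sense of Definition~\ref{def:roots}, with $X_{x_0'}(\alpha)=D\cap X_{x_0'}\in\E'$.

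This yields the first claim immediately: for $\alpha\in\Phi(X,\D)$ both $\pm\alpha$ lie in $R(X,\D)$, and by the above each belongs to $R(X_{x_0'},\D')$, whence $\alpha\in\Phi(X_{x_0'},\D')$. For the second claim, summing over all simple $G$-submodules and combining with the inclusion $d\kappa_{x_0'}(\Lie\theta_{G,X}(C))\subseteq\Lie S$ (the image of $\theta_{G,X}(C)$ stabilizes $x_0'$ and acts on $X_{x_0'}$ through $S$) gives
\[
d\kappa_{x_0'}(\mathfrak{n}) \;=\; \Lie S \;+\; \sum_{\alpha\in R(X,\D)}\Lie U_\alpha,
\]
so $\kappa_{x_0'}((\Autz(X,\D)\cap\ker\psi_*)^\circ)$ is the connected subgroup of $\Autz(X_{x_0'})$ generated by $S$ and the $U_\alpha$ for $\alpha\in R(X,\D)$. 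By Proposition~\ref{prop:Levinl} the restriction $L\to L_{x_0'}$ is an isomorphism, so $L_{x_0'}$ is a reductive subgroup of this image containing the maximal torus $S$. Standard structure theory of linear algebraic groups then forces $L_{x_0'}$ to be generated by $S$ together with those $U_\alpha$ for which $-\alpha$ is also among the roots of the ambient group, namely $\alpha\in\Phi(X,\D)$.

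The main technical point is the non-vanishing of $[s|_{X_{x_0'}}]$, which ensures that every $\alpha\in R(X,\D)$ actually appears in $d\kappa_{x_0'}(\mathfrak{n})$ and that no cancellation occurs when restricting global sections on $X$ to the fiber. As indicated, this reduces to $\gamma|_S\neq 0$, a statement that rests essentially on the hypothesis $X(\gamma)\in(\partial_G X)^\ell$ combined with Lemma~\ref{lemma:Saction}.
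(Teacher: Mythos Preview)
Your argument is correct and follows the same route as the paper's (very terse) proof: identify $\ker d\psi_*$ via Corollary~\ref{cor:dkerpsi}, restrict each simple $G$-submodule to the fiber via Lemma~\ref{lemma:automfiber}, and read off the $S$-weights. Your treatment is considerably more detailed than the paper's, which simply says the second assertion ``follows from Lemma~\ref{lemma:automfiber} and Corollary~\ref{cor:dkerpsi}''; in particular your verification that $\gamma|_S\neq 0$ (so that $[s|_{X_{x_0'}}]\neq 0$) is a point the paper leaves entirely implicit.

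One step deserves a word more than ``standard structure theory'': from $L_{x_0'}$ reductive with maximal torus $S$ and roots contained in $R$, you immediately get that its roots lie in $R\cap(-R)=\Phi(X,\D)$, but the reverse inclusion needs the observation that $L_{x_0'}$ is actually a \emph{Levi} of the image $\kappa_{x_0'}\big((\Autz(X,\D)\cap\ker\psi_*)^\circ\big)$. This is immediate once noted---the image of the unipotent radical is a normal unipotent subgroup, it meets the reductive $L_{x_0'}$ trivially, and together they generate the image---so the roots of $L_{x_0'}$ are exactly the symmetric part $\Phi(X,\D)$ of $R$.
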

\begin{proof}
For the first assertion, it is enough for any $\alpha=\gamma|_S\in\Phi(X,\D)$ to restrict the function $f\in H^0(X,\OO_X(X(\gamma)))^{(B)}_\gamma$ to $X_{x_0'}$. Since $S$ is a maximal torus of $\Autz(X_{x_0'})$, the second assertion follows from Lemma~\ref{lemma:automfiber} and Corollary~\ref{cor:dkerpsi}.
\end{proof}

This provides a complete description of the group $A$. It remains now to describe the fan associated to $X$ as an $A$-variety.

Let $0\neq \gamma\in\Lambda(G/H)$ be such that $\gamma|_S=\alpha\in\Phi$, and choose an element $f_\gamma\in H^0(X,\OO_X(X(\gamma)))^{(B)}_\gamma$ such that $f_\gamma(x_0)=1$. Then $\rho_{G,X}(X(\gamma))\in \V_G^\ell(G/H)$ can be considered as an element of $\Hom_\ZZ(\Chi(S), \ZZ)$, and therefore it is canonically associated with a one-parameter subgroup $\mu_\gamma\colon\GG_m\to S$. The torus $S$ acts on $X$ through an identification with a subtorus of $T_{G,X}$, thanks to Lemma~\ref{lemma:Saction}. As in Section~\ref{s:abelian}, we consider the induced tangent vector field on $X$ and we denote it by $\delta_\gamma\in H^0(X,\T_X)$.

\begin{lemma}\label{lemma:xi}
The product $\xi_\gamma=f_\gamma\delta_\gamma$ is a well-defined tangent vector field of $X$, and it is sent to $H^0(X,\OO_X(X(\gamma)))$ via the surjective map of (\ref{eqn:exact}). Its restriction to $X_{x_0'}$ is a tangent vector field and is a generator of the Lie algebra of $U_\alpha\subset \Autz(X_{x_0'})$. Moreover, the action of the one-parameter subgroup of $\Autz(\Zi_{G,X})$ induced by $\xi_\gamma$ can expressed using formula (\ref{eqn:oda-azione}) of Proposition~\ref{prop:oda}.
\end{lemma}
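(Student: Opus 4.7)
The plan is to reduce all four assertions of the lemma to a single application of \cite[Proposition 3.14]{Oda88} to the toric slice $\Zi_{G,X}$. First I would verify that $\gamma$, viewed as a character of $T_{G,X}$ via the identification $\Lambda_{T_{G,X}}(\Zi_{G,X})=\Lambda_G(G/H)$, is an Oda root of the toric variety $\Zi_{G,X}$: the boundary divisors of $\Zi_{G,X}$ are the intersections $D\cap\Zi_{G,X}$ for $D\in\partial_GX$, and both of the required conditions
\[
\langle\rho_{G,X}(X(\gamma)),\gamma\rangle=-1, \qquad \langle\rho_{G,X}(D),\gamma\rangle\geq 0 \;\;\forall D\in\partial_GX\setminus\{X(\gamma)\}
\]
follow immediately from $f_\gamma\in H^0(X,\OO_X(X(\gamma)))^{(B)}_\gamma$ and the formula $\nu_D(f_\gamma)=\langle\rho_{G,X}(D),\gamma\rangle$. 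Oda's construction then produces a unipotent $1$-PSG $\GG_a\to\Autz(\Zi_{G,X})$ whose derivative at the identity is exactly $f_\gamma|_{\Zi_{G,X}}\cdot\delta_\gamma|_{\Zi_{G,X}}$, giving at once the regularity of $\xi_\gamma|_{\Zi_{G,X}}$ and the ``local coordinates'' assertion of the lemma.

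Next I would upgrade regularity from $\Zi_{G,X}$ to all of $X$. Since $\delta_\gamma$ is the derivative of the $S$-action on $X$ it is a regular vector field globally, hence $\xi_\gamma$ is regular wherever $f_\gamma$ is, i.e.\ on $X\setminus X(\gamma)$; this handles every color and every $G$-stable prime divisor other than $X(\gamma)$. Because $X$ is smooth and the complement of $X_0\cup(X\setminus X(\gamma))$ has codimension at least two, it remains only to check regularity on $X_0$. The key observation is that $S$ acts on $X$ by $G$-equivariant automorphisms (since $\widehat H\subseteq\N_GH$), hence commutes with the left $P_{G,X}^u$-action, so under the local structure isomorphism $X_0\cong P_{G,X}^u\times\Zi_{G,X}$ the vector field $\delta_\gamma$ is purely in the $\Zi_{G,X}$-direction and agrees on each slice $\{p\}\times\Zi_{G,X}$ with $\delta_\gamma|_{\Zi_{G,X}}$; moreover $f_\gamma(pz)=f_\gamma(z)$ because $\gamma|_{P_{G,X}^u}=0$ for the unipotent group $P_{G,X}^u$. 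The regularity of $\xi_\gamma|_{\Zi_{G,X}}$ established above therefore propagates to all of $X_0$, and hence to $X$.

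For the exact sequence (\ref{eqn:exact}), $\xi_\gamma$ is regular along every $G$-stable prime divisor other than $X(\gamma)$, so its image lies in the $X(\gamma)$-summand; Oda's explicit formula on $\Zi_{G,X}$ identifies this image with a nonzero multiple of the class of $f_\gamma$. For the restriction to $X_{x_0'}$: the fibre is $S$-stable, so $\delta_\gamma$ is tangent to it and $\xi_\gamma|_{X_{x_0'}}=f_\gamma|_{X_{x_0'}}\cdot\delta_\gamma|_{X_{x_0'}}$ is well-defined; $X_{x_0'}$ is a smooth complete toric $S$-variety with fan (\ref{eqn:fiberfan}), and $\alpha=\gamma|_S\in\Phi$ is an Oda root of $X_{x_0'}$ with $X(\alpha)=X(\gamma)\cap X_{x_0'}$, so a second application of \cite[Proposition 3.14]{Oda88}---now to $X_{x_0'}$---identifies $\xi_\gamma|_{X_{x_0'}}$ with a generator of $\Lie U_\alpha$. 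The main technical point will be the extension of regularity from $\Zi_{G,X}$ to $X$, which hinges on the compatibility between the $B$-semi-invariance of $f_\gamma$ and the $G$-invariance of $\delta_\gamma$.
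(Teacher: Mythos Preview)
Your proof is correct and follows essentially the same route as the paper's: both establish regularity of $\xi_\gamma$ first on the toric slice $\Zi_{G,X}$ via Oda's local coordinate computation, then propagate to $X_0$ using the $P_{G,X}^u$-invariance of $f_\gamma$ and $\delta_\gamma$ (equivalently, your observation that under $X_0\cong P_{G,X}^u\times\Zi_{G,X}$ the $S$-action is purely along the second factor), and finally conclude by the codimension-two argument. Your write-up is more explicit than the paper's---you spell out the Oda-root verification for $\gamma$ and invoke \cite[Proposition~3.14]{Oda88} a second time directly on $X_{x_0'}$ rather than deducing everything from the $\Zi_{G,X}$ computation---but the underlying argument is the same.
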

\begin{proof}
Formula (\ref{eqn:oda-azione}) of Proposition~\ref{prop:oda} holds for $\xi_\gamma$ being simply an elementary reformulation of the definition $\xi_\gamma=f_\gamma\delta_\gamma$.

The rational function $f_\gamma$ has its only pole in $X(\gamma)$, which means that we only have to check the first assertion on points of $X(\gamma)$. On $\Zi_{G,X}\cap X(\gamma)$ it can be checked using the fact that $\Zi_{G,X}$ is a toric $T_{G,X}$-variety, and expressing $\xi_\gamma$ in local coordinates. This also implies that $\xi_\gamma$ is a well-defined vector field on $E\cap X_0$, thanks to the $P_{G,X}^u$-invariance of both $f_\gamma$ and $\delta_\gamma$. Then the locus where $\xi_\gamma$ might not be a well-defined vector field has codimension at least $2$, which implies the first statement.

Since $S$ acts on $X$ stabilizing both $\Zi_{G,X}$ and $X_{x_0'}$, we deduce that $\xi_\gamma$ can be restricted to a vector field on both these varieties. The rest follows by expressing $\xi_\gamma$ on $\Zi_{G,X}$ explicitly in local coordinates.
\end{proof}

\begin{definition}
We choose a Borel subgroup $B_A$ of $A$ such that $\theta_{A,X}(B_A)\cap \theta_{G,X}(G)=\theta_{G,X}(B)$ and such that $B_A\cap L$ is a Borel subgroup of $L$. Let us also denote by $\Psi=\Psi(X,\D)\subset\Phi(X,\D)$ the set of simple roots and by $\Phi_+=\Phi_+(X,\D)\subset\Phi(X,\D)$ the set of positive roots associated to the Borel subgroup $B_{L_{x_0'}}=  \kappa_{x_0'}(B_A\cap L)$ of $L_{x_0'}$, with respect to the maximal torus $S$. Finally, let
\[
r\colon \Lambda_G(X)\to\Chi(S)=\Lambda_S(X_{x_0'})
\]
be the restriction of characters of $\Lambda_G(X)$ to $S$ (see Section~\ref{s:relating}).
\end{definition}

We may apply Proposition~\ref{prop:abelian} and Theorem~\ref{thm:abelian} to the toric $S$-variety $X_{x_0'}$ and the sets of roots $\Phi$ and $\Psi$. We obtain a description of $X_{x_0'}$ as an $L_{x_0'}$-variety, and in particular the lattice
\[
\Lambda_{L_{x_0'}}(X_{x_0'}) \subseteq \Lambda_S(X_{x_0'}),
\]
together with the projection
\[
\N_S(X_{x_0'}) \to \N_{L_{x_0'}}(X_{x_0'}).
\]

\begin{proposition}\label{prop:linear}
The restriction of weights from $\theta_{A,X}(B_A)$ to $\theta_{G,X}(B)$ induces an isomorphism \[
\Lambda_A(X) \cong r^{-1}(\Lambda_{L_{x_0'}}(X_{x_0'})) \subseteq \Lambda_G(X).
\]
We denote the corresponding surjective map by
\[
s\colon\N_G(X)\to\N_A(X).
\]
The set of colors of $X$ as a spherical $A$-variety is the following disjoint union:
\[
\Delta_A(X) = \Delta_G(X) \cup \left\{ E \in \E \;\middle\vert\; E\cap X_{x_0'} \in \Delta_{L_{x_0'}}(X_{x'_0})\right\},
\]
and for each $E\in\Delta_A(X)$, we have
\[
\rho_{A,X}(E) = s(\rho_{G,X}(E)).
\]
The simple roots associated to the parabolic subgroup $\P_A(X)$ are the union of the set of those associated to $\P_G(X)$ with the set $\{ \alpha\in \Psi \;|\; X_{x'_0}(-\alpha)\notin \Delta_{L_{x_0'}}(X_{x'_0})\}$.
\end{proposition}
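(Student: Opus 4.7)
The approach is to reduce the description of the $A$-action on $X$ to that of the $L_{x_0'}$-action on the generic fiber $X_{x_0'}$ (a toric $S$-variety), for which Proposition~\ref{prop:abelian} applies. Since $A=\theta_{G,X}(G')\times L$ and $B_A=\theta_{G,X}(B')\cdot(B_A\cap L)$ with $B_A\cap L=S\cdot U_L$ (unipotent radical $U_L$ generated by the root subgroups $U_\alpha$, $\alpha\in\Phi_+$), a $B$-eigenvector $f\in\CC(X)^{(B)}_\chi$ is a $B_A$-eigenvector if and only if $f$ is $U_L$-invariant; the $\theta_{G,X}(B')$- and $S$-actions on $f$ are already determined by $\chi$ via restriction and via the map $r$.

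To prove the lattice identification, I first note that the restriction $\Lambda_A(X)\to\Lambda_G(X)$ is injective, since a $B$-invariant rational function on $X$ is constant on the open $B$-orbit. To characterize its image I will test $U_L$-invariance infinitesimally via the vector fields $\xi_\gamma=f_\gamma\,\delta_\gamma\in H^0(X,\T_X)$ of Lemma~\ref{lemma:xi}, one for each simple $\alpha=\gamma|_S\in\Psi$. Because $\delta_\gamma$ is the derivative of the 1-PSG in $S$ corresponding to $\rho_{G,X}(X(\gamma))\in\V_G^\ell(G/H)$ under the identifications of Lemma~\ref{lemma:Saction} and the paragraph preceding Lemma~\ref{lemma:xi}, the $S$-semiinvariance of $f$ gives
\[
\xi_\gamma(f)\;=\;\langle\chi,\,\rho_{G,X}(X(\gamma))\rangle\,f_\gamma\,f.
\]
Thus $f$ is $U_L$-invariant precisely when $\chi$ annihilates $\rho_{G,X}(X(\gamma))$ for every simple $\alpha=\gamma|_S\in\Psi$. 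Applying Proposition~\ref{prop:abelian} to the toric $S$-variety $X_{x_0'}$ under the $L_{x_0'}$-action rewrites this condition on $r(\chi)\in\Chi(S)$ as $r(\chi)\in\Lambda_{L_{x_0'}}(X_{x_0'})$, yielding the lattice statement. The surjection $s\colon\N_G(X)\to\N_A(X)$ is then the map dual to this inclusion of lattices.

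For the colors, I treat the two contributions separately. A $G$-color $D$ remains non-$A$-stable (since $\theta_{G,X}(G)\subseteq A$ already moves it) and is $B_A$-stable because, by the local-coordinate formulas in Lemma~\ref{lemma:xi}, the only divisor moved by $\xi_\gamma$ is $X(\gamma)\in\E$; hence $\Delta_G(X)\subseteq\Delta_A(X)$. Any other $A$-color must then be $G$-stable and thus lies in $\E$, and because $L$ preserves every fiber of $\psi$ and acts on the generic fiber through $L_{x_0'}\cong L$, an element $E\in\E$ is $B_A$-stable (resp.\ $A$-stable) iff $E\cap X_{x_0'}$ is $B_{L_{x_0'}}$-stable (resp.\ $L_{x_0'}$-stable). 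Proposition~\ref{prop:abelian} applied to $X_{x_0'}$ then yields the stated characterization of $\Delta_A(X)$. The identity $\rho_{A,X}(E)=s(\rho_{G,X}(E))$ is immediate from Definition~\ref{def:invariants}(\ref{def:invariants:rho}) together with the inclusion $\Lambda_A(X)\subseteq\Lambda_G(X)$ just established.

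Finally, the simple roots of $A$ decompose as the disjoint union of those of $G'$ and the set $\Psi$. A simple root $\alpha$ of $G'$ lies in $\eS^p_A(X)$ iff $U_{-\alpha}$ stabilizes every $A$-color; since $U_{-\alpha}\subseteq\theta_{G,X}(G')$ fixes every $G$-stable divisor (hence every new $A$-color coming from $\E$), this reduces to stabilization of $\Delta_G(X)$, equivalently $\alpha\in\eS^p_G(X)$. For $\alpha\in\Psi$, the subgroup $U_{-\alpha}\subseteq L$ commutes with $\theta_{G,X}(G')$ and fixes every $G$-color (again by the pole-divisor argument for $\xi_{-\gamma}$), so $U_{-\alpha}\subseteq\P_A(X)$ iff it stabilizes the $A$-colors in $\E$, which by Proposition~\ref{prop:abelian} applied to $X_{x_0'}$ is precisely $X_{x_0'}(-\alpha)\notin\Delta_{L_{x_0'}}(X_{x_0'})$. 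The main technical hurdle will be the infinitesimal vector-field computation underpinning the lattice statement: verifying that $\xi_\gamma\in H^0(X,\T_X)$ genuinely represents the Lie-algebra generator of $U_\alpha\subseteq A$ on the whole of $X$, so that $U_L$-invariance of $f$ is captured by the single orthogonality relation on $\Lambda_G(X)$ rather than only on the fiber $X_{x_0'}$.
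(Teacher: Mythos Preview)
Your argument for the colors and for $\P_A(X)$ follows the paper's line exactly: an $A$-color either dominates $X'$ (hence meets $X_{x_0'}$ in an $L_{x_0'}$-color) or it does not (hence is already a $G$-color), and the statement on $\P_A(X)$ follows. The identity $\rho_{A,X}=s\circ\rho_{G,X}$ is likewise immediate from the lattice inclusion.

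Where you diverge from the paper is in the lattice statement. The paper does not use the global vector fields $\xi_\gamma$ at all. It argues directly from the product structure $A=\theta_{G,X}(G')\times L$ and the fact that $L$ preserves the fibers of $\psi$: a $B$-eigenvector $f\in\CC(X)^{(B)}_\chi$ is automatically a $\theta_{G,X}(B')$-eigenvector, so $f$ is a $B_A$-eigenvector if and only if its restriction $f|_{X_{x_0'}}$ is a $B_{L_{x_0'}}$-eigenvector. Since $f|_{X_{x_0'}}\in\CC(X_{x_0'})^{(S)}_{r(\chi)}$, Proposition~\ref{prop:abelian} applied to $X_{x_0'}$ gives $r(\chi)\in\Lambda_{L_{x_0'}}(X_{x_0'})$ as the exact condition, and the lattice identification follows.

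Your route via $\xi_\gamma(f)=\langle\chi,\rho_{G,X}(X(\gamma))\rangle f_\gamma f$ is correct as a computation, but the ``technical hurdle'' you flag is genuine and is precisely what the paper sidesteps. The field $\xi_\gamma$ lies in $\ker d\psi_*\cap\Lie\Autz(X,\D)$, but there is no reason it should lie in $\Lie L$: the Levi $L$ is only specified up to conjugacy inside $(\Autz(X,\D)\cap\ker\psi_*)^\circ$, and the $S$-weight-$\alpha$ subspace of $\ker d\psi_*$ can be strictly larger than $\Lie U_\alpha^L$. To close the gap you would have to show that the vanishing of $\xi_\gamma(f)$ is equivalent to $U_\alpha^L$-invariance of $f$; the natural way to do this is to restrict both conditions to $X_{x_0'}$ and invoke Proposition~\ref{prop:abelian}, at which point you have reproduced the paper's argument and the vector-field computation becomes redundant. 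In short, the fiber-restriction argument is both simpler and what actually carries the proof.
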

\begin{proof}
A $B_A$-eigenvector in $\CC(X)$ is a fortiori a $B$-eigenvector, thanks to the choice of $B_A$. This induces an inclusion $\Lambda_A(X)\subseteq\Lambda(X)$.

Moreover, a $B$-eigenvector $f\in\CC(X)$ is also a $B_A$-eigenvector if and only if its restriction $f|_{X_{x_0'}}$ is a $B_{L_{x_0'}}$-eigenvector, thanks to the structure of $A$ as described in Proposition~\ref{prop:Levinl}. This proves the first assertion.

Secondly, a color of $X$ as an $A$-variety maps either dominantly onto $X'$, or not. In the first case, its intersection with the (generic) fiber $X_{x_0'}$ is $B_{L_{x_0'}}$-stable but not $L_{x_0'}$-stable (otherwise it would have been $A$-stable).

In the second case, it maps onto a $G$-color of $X'$, i.e. it is a color of $X$ with respect to the $G$ action. The second assertion follows, and implies the last assertion on $\P_A(X)$.
\end{proof}

\begin{example}\label{ex:linear}
Let $G=(\GG_m)^2\times \SL(2)$ act on $Y=\PP^1\times\PP^1\times\PP^1$ as follows: $(\GG_m)^2$ acts only on the first factor $\PP^1=\PP(\CC^2)$ linearly on the two homogeneous coordinates, $\SL(2)$ acts on the second and third factor diagonally. Then consider $X$ to be the projective completion of the line bundle $\OO_{\PP^1}(1)\boxtimes \OO_{\PP^1}(-1)\boxtimes \OO_{\PP^1}$ on $Y$. Then $X$ is a spherical $G$-variety, with open orbit $G/H$ where
\[
H = \left\{ \left(a,b,\left(\begin{array}{cc} a & 0 \\ 0 & a^{-1} \end{array}\right)\right)\;\middle\vert\; a,b\in\CC\smallsetminus\{0\}\right\}.
\]
The variety $X$ is toroidal under the action of $G$, the associated wonderful variety is $\PP^1\times\PP^1$ and the map $X\to\XX$ extending $G/H\to G/\widehat H$ is the projection of $X$ onto $Y$ followed by the projection $Y\to\XX$ on the second and third coordinate. There are two $G$-colors in $X$, namely the inverse images of the two $G$-colors of $\XX$.

There are five $G$-stable prime divisors in $X$, two of them given by the conditions that the projection on the first coordinate of $Y$ is a $(\GG_m)^2$-stable point (i.e.\ $0$ or $\infty$ in the usual coordinates). These two $G$-stable prime divisors $E_1$ and $E_2$ are mapped surjectively onto $\XX$, therefore setting $\D=\partial_G X\smallsetminus\E$ where $\E=\{D_1,D_2\}$ we have $\D\supset (\partial_GX)^{n\ell}$.

A Levi subgroup $A$ of $\Autz(X,\D)$ (actually, the whole group in this case) is the image of $\GL(2)\times \SL(2)$, where we let $\GL(2)$ act on $X$ by lifting its linear action on the first factor $\PP^1$ of $Y$. The $A$-colors of $X$ are the $G$-colors together with one additional prime divisor, namely $D_1$ or $D_2$ (according to the choice of a Borel subgroup of $\GL(2)$).
\end{example}

Let $c$ be a cone of the fan $\F(X)$. Then $c$ is generated as a convex cone by a set of $1$-dimensional faces $F(c)$. We denote by $c^\ell$ the intersection $c\cap \V_G^\ell(X)$, by $F^\ell(c)$ the $1$-dimensional faces of $F(c)$ generating $c^\ell$, and $F^{n\ell}(c) = F(c)\smallsetminus F^\ell(c)$.

Since $c^\ell$ is a cone of the toric $S$-variety $X_{x_0'}$, it corresponds to an $S$-orbit $Y$ on $X_{x_0'}$. As in the proof of Theorem~\ref{thm:abelian}, the corresponding $L_{x_0'}$-orbit $L_{x_0'}Y$ on $X_{x_0'}$ has colored cone $(c^\ell\cap\Psi^\perp,d(c^\ell))$, where the orthogonal $\Psi^\perp$ is taken inside $\V_G^\ell(G/H)$, and $d(c^\ell)$ is a set of $L_{x_0'}$-colors of $X_{x_0'}$.

\begin{definition}
For any $c\in\F(X)$ we define a colored cone $(c_A(c),d_A(c))$, where $c_A(c)\subset \N_A(X)$ and $d_A(c)\subseteq \Delta_A(X)$, as follows. The cone $c_A(c)$ is the convex cone in $\N_A(X)$ generated by $s(F^{n\ell}(c))$ and $s(c^\ell\cap\Psi^\perp)$. The set $d_A(c)$ is the set of colors $E\in\Delta_A(X)$ such that $E\notin \Delta_G(X)$ and $E\cap X_{x_0'}\in d(c^\ell)$.
\end{definition}

\begin{theorem}\label{thm:linear}
The colored fan $\F_A(X)$ of $X$ as a spherical $A$-variety is
\[
\F_A(X) = \left\{ (c_A(c),d_A(c)) \;\middle\vert\; c \in \F_G(X) \right\}.
\]
\end{theorem}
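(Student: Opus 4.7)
The plan is to compute directly the colored cone of each $A$-orbit and show it equals the claimed formula. Since $G\subseteq A$, every $A$-orbit on $X$ has the form $AY$ for some $G$-orbit $Y$, giving a surjection from the set of $G$-orbits onto the set of $A$-orbits. I would fix such a $Y$ with cone $c=c_{X,Y}\in\F_G(X)$ and set $Y_0=Y\cap X_{x_0'}$; by the local structure theorem together with (\ref{eqn:fiberfan}), $Y_0$ is a single $S$-orbit of the toric variety $X_{x_0'}$ whose cone in $\F_S(X_{x_0'})$ is exactly $c^\ell=c\cap\V_G^\ell(G/H)$. It thus suffices to prove that the colored cone of the $A$-orbit $AY$ equals $(c_A(c),d_A(c))$.

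First I would determine which $A$-colors contain $AY$. By Proposition~\ref{prop:linear}, $\Delta_A(X)$ splits into $\Delta_G(X)$ and the $\E$-divisors whose intersection with $X_{x_0'}$ is an $L_{x_0'}$-color. Crucially, no $G$-color $D$ can contain $AY$: the map $\pi\colon X\to\XX$ is $A$-equivariant (since $L\subseteq\ker\psi_*$ acts trivially on $\XX$ while $\theta_{G,X}(G')\subseteq A$ acts through $\theta_{G,\XX}$), Lemma~\ref{lemma:equation} implies that $\pi(D)$ is a $G$-color of $\XX$, and since $\XX$ is wonderful and hence toroidal as an $A$-variety (the $A$- and $G$-colors of $\XX$ coincide, as $L$ and $C$ act trivially on $\XX$), no $A$-color of $\XX$ can contain the $A$-orbit $\pi(AY)=A\pi(Y)$. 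Hence the $A$-colors containing $AY$ are precisely the $\E$-divisors $E$ such that $E\cap X_{x_0'}$ contains $L_{x_0'}Y_0$; by Theorem~\ref{thm:abelian} applied to $X_{x_0'}$ as an $L_{x_0'}$-toric variety, this set coincides with $d_A(c)$.

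Next, the cone $c_A(AY)\subseteq\N_A(X)$ is generated by $\rho_{A,X}$ of all $B_A$-stable prime divisors containing $AY$. Since $(\partial_G X)^{n\ell}\subseteq\D$ and $\rho_{A,X}(D)=s(\rho_{G,X}(D))$ for $D\in\D$, the contribution from $(\partial_G X)^{n\ell}$-divisors in $F(c)$ is exactly $s(F^{n\ell}(c))$. All remaining $B_A$-stable divisors containing $AY$, whether $A$-stable in $\D\cap(\partial_G X)^\ell$ or $A$-colors of $\E$-type in $d_A(c)$, restrict to $L_{x_0'}$-stable divisors or $L_{x_0'}$-colors of $X_{x_0'}$ containing $L_{x_0'}Y_0$; their $\rho_{L_{x_0'},X_{x_0'}}$-images generate $c^\ell\cap\Psi^\perp\subseteq\N_{L_{x_0'}}(X_{x_0'})$ by Theorem~\ref{thm:abelian}. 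Transporting this cone back to $\N_A(X)$ via the identification $\V_G^\ell(G/H)\cong\N_S(X_{x_0'})$ from Lemma~\ref{lemma:Saction} and the lattice description $\Lambda_A(X)=r^{-1}(\Lambda_{L_{x_0'}}(X_{x_0'}))$ from Proposition~\ref{prop:linear} produces exactly $s(c^\ell\cap\Psi^\perp)$. The sum of the two contributions is then $c_A(c)$, which concludes the argument.

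The main obstacle is the compatibility invoked in the last step: one must verify that $s\colon\N_G(X)\to\N_A(X)$, restricted to $\V_G^\ell(G/H)\subseteq\N_G(X)$, agrees with the composition of the identification $\V_G^\ell(G/H)\cong\N_S(X_{x_0'})$ with the projection $\N_S(X_{x_0'})\to\N_{L_{x_0'}}(X_{x_0'})$ dual to the inclusion $\Lambda_{L_{x_0'}}(X_{x_0'})\hookrightarrow\Chi(S)$. Both maps are dually induced by character restrictions, but tracing them through $r\colon\Lambda_G(X)\to\Chi(S)$ and the definition of $\Lambda_A(X)$ as an inverse image requires careful bookkeeping, especially to relate $\rho_{G,X}$ of divisors in $(\partial_G X)^\ell$ and of $\E$-type $A$-colors to the corresponding $\rho_S$ and $\rho_{L_{x_0'}}$ on the fiber.
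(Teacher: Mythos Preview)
Your overall strategy---compute the colored cone of each $A$-orbit $AY$ by reducing to the $L_{x_0'}$-action on the fiber and invoking Theorem~\ref{thm:abelian}---is exactly the paper's. But there is a genuine gap in the reduction step.

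The assertion that $Y_0=Y\cap X_{x_0'}$ is an $S$-orbit with cone $c^\ell$ is false in general. The fiber $X_{x_0'}$ lies over a point in the open $G$-orbit of $X'$, so it meets none of the divisors in $(\partial_G X)^{n\ell}$. Hence whenever $F^{n\ell}(c)\neq\varnothing$ (i.e.\ $c\neq c^\ell$), the orbit $Y$ is contained in some $(\partial_G X)^{n\ell}$-divisor and $Y\cap X_{x_0'}=\varnothing$. The local structure theorem does guarantee that $Y$ meets the \emph{slice} $\Zi_{G,X}$ in a $T_{G,X}$-orbit, but $X_{x_0'}\subsetneq\Zi_{G,X}$ and the intersection with the fiber can be empty. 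The correct object is the $S$-orbit of $X_{x_0'}$ associated to the cone $c^\ell$, which comes from a different $G$-orbit.

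Even after this correction, the key equivalence ``$E\supseteq AY$ if and only if $E\cap X_{x_0'}$ contains the $L_{x_0'}$-orbit with colored cone $(c^\ell\cap\Psi^\perp,d(c^\ell))$'' does not follow from Theorem~\ref{thm:abelian} alone: that theorem only describes what happens \emph{inside} the fiber. To pass from the $L_{x_0'}$-action on $X_{x_0'}$ to the $A$-action on all of $X$, the paper works on the full slice $\Zi_{G,X}$ (which does meet every $G$-orbit) and uses Lemma~\ref{lemma:xi} to realize the unipotent one-parameter subgroups of $L$ via the global vector fields $\xi_\gamma=f_\gamma\delta_\gamma$. This allows the explicit local-coordinate formulae (\ref{eqn:color1}), (\ref{eqn:color2a}), (\ref{eqn:color2b}) from the proof of Theorem~\ref{thm:abelian} to be applied on $\Zi_{G,X}$ rather than just on $X_{x_0'}$, and that is what links containment of $AY$ in $E$ to the combinatorics of $c^\ell$. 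Your argument via $\pi$ for excluding $G$-colors is correct (the paper uses the simpler observation that $Y\subseteq AY$ and $X$ is $G$-toroidal), but for the $\E$-type colors you need this extra bridge. The compatibility issue you flag at the end is real but is bookkeeping; the substantive missing ingredient is the passage through $\Zi_{G,X}$ via Lemma~\ref{lemma:xi}.
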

\begin{proof}
Let $Y$ be a $G$-orbit of $X$, with associated cone $c=c_{X,Y}$. We claim that the colored cone associated to the $A$-orbit $AY$ is $(c_A(c),d_A(c))$:  arguing as in the proof of Theorem~\ref{thm:abelian}, this is enough to show the theorem.

To prove the claim, first we show that the set $d'$ of $A$-colors containing $AY$ is equal to $d_A(c)$. Since $X$ is toroidal, no $G$-color contains $Y$, nor $AY$. Therefore any $A$-color $E$ in $d'$ is indeed a $G$-stable prime divisor whose functional lies in $\V_G^\ell(X)$. It intersects $X_{x_0'}$ in an $L_{x_0'}$-color of $X_{x_0'}$, by Proposition~\ref{prop:linear}, and we only have to show that $E\cap X_{x_0'}$ is in $d(c^\ell)$.

We check this fact using the definition of $d(c^\ell)$. Take a positive root $\beta\in\Phi_+$ of $X_{x_0'}$, the prime divisors $X_{x_0'}(\beta)$, $X_{x_0'}(-\beta)$ of $X_{x_0'}$, and suppose that $X_{x_0'}(-\beta)=E\cap X_{x_0'}$, so $\rho_{S,X_{x_0'}}(X_{x_0'}(-\beta))$ lies on a $1$-codimensional face of $c^\ell$. We have to show that $\rho_{S,X_{x_0'}}(X_{x_0'}(\beta))$ also lies on a $1$-codimensional face of $c^\ell$, in other words that $X_{x_0'}(\beta)$ contains the $S$-orbit of $X_{x_0'}$ associated $c^\ell$.

Now $E=E_1$ and some other element $E_2\in\E$ satisfy $E_1\cap X_{x_0'} = X_{x_0'}(-\beta)$, $E_2\cap X_{x_0'} = X_{x_0'}(\beta)$, and $-\beta$ and $\beta$ are the restrictions to $S$ of resp.\ $\gamma_1,\gamma_2\in\Lambda_G(X)$, such that $X(\gamma_i)=E_i$ for $i=1,2$. Suppose that $E_2$ doesn't contain $Y$. Then we consider $\Zi_{G,X}$: intersecting it with $E_1$, $E_2$ and $Y$ two $T_{G,X}$-stable prime divisors and a $T_{G,X}$-orbit, such that $E_1\cap\Zi_{G,X}\supseteq Y\cap\Zi_{G,X}$ and $E_2\cap\Zi_{G,X}\nsupseteq Y\cap\Zi_{G,X}$.

At this point we follow the same approach of the proof of Theorem~\ref{thm:abelian}, statement (\ref{proof:abelian:colors}), applied to the toric variety $\Zi_{G,X}$ and the automorphisms induced by the tangent vector field $\xi_{\gamma_1}$ (as defined in Lemma~\ref{lemma:xi}). This yields the formula (\ref{eqn:color1}) for $\xi_{\gamma_1}$, which shows that $E_1\cap\Zi_{G,X}$ doesn't contain $AY\cap\Zi_{G,X}$: a contradiction. As a consequence $E_2\supseteq Y$, so $X_{x_0'}(\beta)$ contains the $S$-orbit of $X_{x_0'}$ associated $c^\ell$. This concludes the proof of the inclusion $d'\subseteq d_A(c)$.

Let now $D\in d_A(c)$. Then, by Theorem~\ref{thm:abelian}, the intersection $D\cap X_{x_0'}$ contains the $L_{x_0'}$-orbit of $X_{x_0'}$ corresponding to $(c^\ell\cap \Psi^\perp, d(c^\ell))$. Let $y$ be a point on this orbit: then $D$ contains $\overline{Ay}$.

On the other hand, from the proof of Theorem~\ref{thm:abelian}, we see that $\overline{L_{x_0'}y}$ contains the $S$-orbit of $X_{x_0'}$ corresponding to $c^\ell\subset\N_S(X_{x_0'})$. It follows that $\overline{Ay}$ contains the $G$-orbit of $X$ associated to $c^\ell\subset \N_G(X)$, and thus also the $G$-orbit $Y$ associated to $c\subset \N_G(X)$. Being $A$-stable, $\overline{Ay}$ must then contain $AY$ too, and since $D$ is closed, we obtain $D\supseteq AY$. I.e., $D$ is in $d'$.

We now prove that the convex cone $c'$ associated to $AY$ is $c_A(c)$. First observe that $Y$ and $AY$ are contained in the same elements of $(\partial_G X)^{n\ell}$, since $L$ stabilizes all fibers of $\psi$. Therefore $c'$ is generated by $s(F^{n\ell}(c))$ and its intersection with $s(\V_G^\ell(X))$. It remains to prove that $c'\cap s(\V_G^\ell(X))=s(c^\ell\cap \Psi^\perp)$.

The cone $c'\cap s(\V_G^\ell(X))$ is generated by $\rho_{A,X}(E)$ where $E\in(\partial_G X)^\ell$ is:
\begin{itemize}
\item[(1)] an $A$-color of $X$ containing $AY$, i.e.\ $E\in d_A(c)$, or
\item[(2)] an $A$-stable prime divisor containing $AY$.
\end{itemize}
On the other hand the generators of $s(c^\ell\cap\Psi^\perp)$ are the elements $\rho_{A,X}(E)$ where $E\in(\partial_G X)^\ell$ is:
\begin{itemize}
\item[(1')] an $A$-color such that $E\cap X_{x_0'}$ is a color containing the $L_{x_0'}$-orbit $Z$ of $X_{x_0'}$ associated to $(c^\ell\cap\Psi^\perp,d(c^\ell))$, or
\item[(2')] an $A$-stable prime divisor such that $E\cap X_{x_0'}$ is a $L_{x_0'}$-stable prime divisor containing $Z$.
\end{itemize}
Thanks to the first part of the proof, the prime divisors $E$ of type (1) and of type (1') are the same.

If $E$ is of type (2') then it contains $AZ$, whose closure in turn contains $AY$. Therefore $E$ is of type (2). Let now $E$ be of type (2). Then $E\cap \Zi_{G,X}$ is an $L$-stable (and $T_{G,X}$-stable) prime divisor of $\Zi_{G,X}$ containing $Y\cap\Zi_{G,X}$, which is the $T_{G,X}$-orbit of $\Zi_{G,X}$ associated with $c$, and $\rho_{T_{G,X},\Zi_{G,X}}(E\cap \Zi_{G,X})$ lies on $\V_G^\ell(G/H)$. Hence $E\cap X_{x_0'}$ is an $L_{x_0'}$-stable prime divisor of $X_{x_0'}$ containing the $S$-orbit of $X_{x_0'}$ associated with $c^\ell$. Thanks to the proof of Theorem~\ref{thm:abelian}, we deduce that $E\cap X_{x_0'}$ contains $Z$, i.e.\ $E$ is of type (2').
\end{proof}

\begin{corollary}\label{cor:horosphericallinear}
$\Sigma_A(X)=\Sigma_G(X)$.
\end{corollary}
\begin{proof}
The proof is similar to the proof of Corollary~\ref{cor:abelianSigma}.
\end{proof}

\begin{remark}
Although a Levi subgroup $A$ of $\Autz(X,\D)$ splits (up to central isogeny) into a product $(\theta_{G,X}(G),\theta_{G,X}(G))\times L(X,\D)$, Example~\ref{ex:linear} shows that the open $A$-orbit of $X$ in general is not the product of a $(G,G)$-homogeneous space and an $L(X,\D)$-homogeneous space. On the other hand, from Corollary~\ref{cor:horosphericallinear} it follows that the factor $L(X,\D)$ acts trivially on $\XX$, whose open $A$-orbit is then always a homogeneous space under the action of $(G,G)$.
\end{remark}

\end{document}